\documentclass[10pt]{amsart}
\usepackage{graphicx}
\usepackage[mathscr]{eucal}
\usepackage{microtype}
\usepackage{amssymb}
\usepackage[parfill]{parskip}
\usepackage{mathrsfs}
\usepackage{xcolor}
\usepackage{color}
\usepackage[top=1in, bottom=1in, left=1in, right=1in]{geometry}
\usepackage{enumerate}
\usepackage{bm}
\usepackage[normalem]{ulem}
\usepackage{cancel}
\usepackage[all]{xy}
\usepackage{tikz}

\usepackage{setspace} \doublespacing

\newcommand{\lr}{\mathnormal{\mathbf{L}(\mathbb R)}}
\newcommand{\gch}{\mathnormal{\mathsf{GCH}}}

\newcommand{\ulim}[2]{#1\text{-}\lim\limits_{#2<\omega}}
\newcommand{\usum}[2]{#1\text{-}\sum_{#2<\omega}}
\newcommand{\uadd}[2]{#1\text{-}\bigoplus_{#2<\omega}}

\newcommand{\zf}{\mathnormal{\mathsf{ZF}}}
\newcommand{\zfc}{\mathnormal{\mathsf{ZFC}}}
\newcommand{\zfa}{\mathnormal{\mathsf{ZFA}}}

\DeclareMathOperator{\fs}{FS}

\DeclareMathOperator{\dom}{dom}

\DeclareMathOperator{\fix}{Fix}
\DeclareMathOperator{\sym}{Sym}

\DeclareMathOperator{\aut}{Aut}



\newcommand{\leqrk}[2]{#1\leq_{\mathrm{RK}}#2}
\newcommand{\eqrk}[2]{#1\equiv_{\mathrm{RK}}#2}
\newcommand{\ch}{\mathnormal{\mathsf{CH}}}
\newcommand{\utr}{\mathnormal{\mathsf{UT}(\mathbb R)}}

\newcommand{\sub}{\subseteq}

\newcommand{\cF}{\mathscr{F}}
\newcommand{\ord}{\mathbf{Ord}}


\newtheorem{theorem}{Theorem}[section]
\newtheorem{proposition}[theorem]{Proposition}
\newtheorem{lemma}[theorem]{Lemma}
\newtheorem{corollary}[theorem]{Corollary}
\newtheorem{conjecture}[theorem]{Conjecture}
\newtheorem{question}[theorem]{Question}

\theoremstyle{definition}
\newtheorem{definition}[theorem]{Definition}

\theoremstyle{remark}




\author[D. Fern\'andez]{David Fern\'andez-Bret\'on}
\address{
Escuela Superior de F\'{\i}sica y Matem\'aticas\\
Instituto Polit\'ecnico Nacional\\
Av. Instituto Polit\'ecnico Nacional s/n Edificio 9, 
Col. San Pedro Zacatenco, Alcald\'{\i}a Gustavo A. Madero, 07738, CDMX, Mexico. 
}
\email{dfernandezb@ipn.mx}
\urladdr{https://dfernandezb.web.app}

\author[J. Navarro]{Jareb Navarro-Castillo}
\address{
Centro de Ciencias Matemáticas\\
Universidad Nacional Autónoma de México\\
Antigua Carretera a P\'atzcuaro 8701,
Col. Ex Hacienda San Jos\'e de la Huerta,
Morelia 58089, Michoac\'an, Mexico
}
\email{jareb@matmor.unam.mx}

\author[J. Soria]{Jes\'us A. Soria-Rojas}
\address{
Escuela Superior de F\'{\i}sica y Matem\'aticas\\
Instituto Polit\'ecnico Nacional\\
Av. Instituto Polit\'ecnico Nacional s/n Edificio 9, 
Col. San Pedro Zacatenco, Alcald\'{\i}a Gustavo A. Madero, 07738, CDMX, Mexico. 
}
\curraddr{
Department of Algebra\\
Faculty of Mathematics and Physics\\
Charles University\\
Sokolovská 83, 186 75 Praha 8, Czech Republic.
}
\email{rojas@math.cas.cz}

\title[Q-points, selectives, and idempotents]{Q-points, selective ultrafilters, and idempotents, \\ with an application to choiceless set theory}

\subjclass[2020]{Primary 03E25, 54D35; Secondary 03E30, 22A15, 20M10.}

\keywords{Idempotent ultrafilter, Q-point, selective ultrafilter, Solovay model, additive ultrafilter, algebra in the \v{C}ech--Stone compactification, symmetric model.}

\begin{document}

\maketitle

\begin{abstract}
We study ultrafilters from the perspective of the algebra in the \v{C}ech--Stone compactification of the natural numbers, and idempotent elements therein. The first two results that we prove establish that, if $p$ is a Q-point (resp. a selective ultrafilter) and $\mathscr F^p$ (resp. $\mathscr G^p$) is the smallest family containing $p$ and closed under iterated sums (resp. closed under Blass--Frol\'{\i}k sums and Rudin--Keisler images), then $\mathscr F^p$ (resp. $\mathscr G^p$) contains no idempotent elements. The second of these results about a selective ultrafilter has the following interesting consequence: assuming a conjecture of Blass, in models of the form $\lr[p]$ where $\lr$ is a Solovay model (of $\zf$ without choice) and $p$ is a selective ultrafilter, there are no idempotent elements. In particular, the theory $\zf$ plus the existence of a nonprincipal ultrafilter on $\omega$ does not imply the existence of idempotent ultrafilters, which answers a question of Di Nasso and Tachtsis (Proc. Amer. Math. Soc. 146, 397--411) that was also asked by Tachtsis (J. Symb. Log. 83, 557--571). Following the line of obtaining independence results in $\zf$, we finish the paper by proving that $\zf$ plus ``every additive filter can be extended to an idempotent ultrafilter'' does not imply the Ultrafilter Theorem over $\mathbb R$, answering another question of Di Nasso and Tachtsis from the aforementioned paper.
\end{abstract}

\section{Introduction}

The main contribution of this paper is providing one full answer, and a partial answer, respectively, to two questions of Di Nasso and Tachtsis from~\cite{dinasso-tachtsis} (the second question was also asked by Tachtsis in~\cite{tachtsis-ellis}). In the process of obtaining one of these answers, we prove some $\zfc$ results about the relation (more precisely, the lack thereof) between idempotent ultrafilters and Q-points or selective ultrafilters, when one also considers certain ultrafilter-building operations. These are of independent interest, relevant to the study of the algebra in the \v{C}ech--Stone compactification in the usual $\zfc$ context.

Recall that an ultrafilter on a set $X$ is a family of sets $u\subseteq\wp(X)$ that is closed under supersets and under finite intersections, and with the property that for every $Y\subseteq X$ either $Y\in u$ or $X\setminus Y\in u$ (and only one of these two options holds). One topologizes the set $\beta X$ of all ultrafilters on $X$ by declaring open every set of the form $\overline{A}=\{u\in\beta X\big|A\in u\}$ for $A\subseteq X$; this turns $\beta X$ into a compact Hausdorff space (with many other interesting topological properties). Identifying each $x\in X$ with the principal ultrafilter centred at $x$, $\{A\subseteq X\big|x\in A\}$, one obtains a dense copy of $X$ (equipped with the discrete topology) within $\beta X$. Furthermore, if we have a semigroup operation, denoted $+$, on $X$, that operation can be extended to all of $\beta X$ by the formula $u+v=\{A\subseteq X\big|\{x\in X\big|\{y\in X\big|x+y\in A\}\in v\}\in u\}$. Typically, this extended operation is not commutative (even if the original operation on $X$ was) but it is associative and, furthermore, for each fixed $v\in\beta X$ the mapping $u\longmapsto u+v$ is continuous; this particular combination of algebraic and topological properties is usually referred to by saying that $\beta X$ is a right-topological semigroup.

Whenever one has two sets $X,Y$, a function $f:X\longrightarrow Y$, and an ultrafilter $u\in\beta X$, the set $\{B\subseteq Y\big|f^{-1}[B]\in u\}$ is an ultrafilter on $Y$, known as the {\it Rudin--Keisler image} of $Y$ under $f$ and denoted by $f(u)$. The ultrafilter $f(u)$ is generated by sets of the form $f[A]$, where $A\in u$. The {\bf Rudin--Keisler order} of ultrafilters, denoted $\leqrk{}{}$, is the relation generated by specifying that $\leqrk{f(u)}{u}$ for every ultrafilter $u$ over a set $X$ and for every function $f$ with domain $X$. This relation happens to be a preorder among all ultrafilters; the equivalence relation induced by this preorder is the {\bf Rudin--Keisler equivalence}, denoted $\eqrk{}{}$. Thus $\eqrk{u}{v}$ if and only if $\leqrk{u}{v}$ and $\leqrk{v}{u}$; it is a classical theorem of M. E. Rudin and Solovay (independently) that $\eqrk{u}{v}$ if and only if there are $A\in u$, $B\in v$, and a bijective function $f:A\longrightarrow B$ such that (extending $f$ arbitrarily to all of $X$ we have) $f(u)=v$.

There are three main types of special ultrafilters that we will work with in this paper. If $X$ is a countable set, a {\bf Q-point} is a nonprincipal ultrafilter $u\in\beta X$ with the property that, for every partition of $X$, $X=\bigcup_{n<\omega}F_n$ into finite sets, there exists an $A\in u$ such that $(\forall n<\omega)(|A\cap F_n|\leq 1)$. We will say, on the other hand, that the nonprincipal ultrafilter $u\in\beta X$ is {\bf selective} if, for every partition of $X$ (into arbitrary pieces), $X=\bigcup_{n<\omega}X_n$, either $X_n\in u$ for some $n$, or there exists an $A\in u$ such that $(\forall n<\omega)(|A\cap X_n|\leq 1)$. An ultrafilter $u$ on a countable set $X$ is selective if and only if for every function $f$ with domain $X$, there exists an $A\in u$ such that $f\upharpoonright A$ is either constant or one-to-one; this happens if and only if $u$ is simultaneously a Q-point and a P-point\footnote{A P-point is an ultrafilter $u$ on a set $X$ with the property that for every function $f$ with domain $X$, there exists an $A\in u$ such that $f\upharpoonright A$ is either constant or finite-to-one. P-points will only be dealt with in this paper insofar as selective ultrafilters are P-points.}. Another extremely useful equivalent definition of selective ultrafilter involves the so-called {\bf Ramsey property}: an ultrafilter $p$ on $X$ is selective if and only if for every colouring $c:[X]^2\longrightarrow 2$ there exists an $A\in p$ that is $c$-homogeneous. The third special type of ultrafilter that we will consider is an idempotent: if $X$ comes equipped with a semigroup operation $+$ (in this paper, this essentially means if $X=\mathbb N$ or $X=\omega$ with the usual addition operation), then a nonprincipal ultrafilter $u\in\beta X$ is an {\bf idempotent} if $u+u=u$ (note that the only principal ultrafilter satisfying this property would be the one corresponding to the identity element $0$; by convention we only allow the word ``idempotent'' to refer to nonprincipal ultrafilters). All of the definitions and equivalences mentioned in this paragraph are so well-known as to be better labeled folklore.

In standard $\zfc$ set theory, it is known that the existence of Q-points is independent of the $\zfc$ axioms, and so is the existence of selective ultrafilters. On the other hand, the existence of (nonprincipal) idempotent ultrafilters on $\mathbb N$ is provable in $\zfc$ (this is known as the Ellis--Numakura lemma); the usual proof of this fact utilizes Zorn's Lemma in an essential way. Di Nasso and Tachtsis~\cite{dinasso-tachtsis} analyzed the provability of the existence of nonprincipal idempotent ultrafilters in $\zf$ (and so did Tachtsis~\cite{tachtsis-ellis} from a somewhat more general perspective). One of their main results is that, assuming the Ultrafilter Theorem for $\mathbb R$, denoted by $\utr$, there are nonprincipal idempotent elements. To do this, they define the notion of an additive filter. A filter $\mathscr F$ on $X$ (that is, a family $\mathscr F\subseteq\wp(X)$ closed under supersets and under finite intersections) is {\bf additive} if, for every ultrafilter $v\supseteq\mathscr F$, $\mathscr F\subseteq\mathscr F+v$ where the operation $+$ between two filters is defined exactly as it was for two ultrafilters. Di Nasso and Tachtsis's argument, using in an essential way the notion of additive filter, actually proves more than the mere existence of idempotent ultrafilters: they prove~\cite[Theorem 3.6]{dinasso-tachtsis} that $\utr$ implies that every additive filter can be extended to an idempotent ultrafilter. At the end of the paper, they leave a number of open questions, one of which is whether this implication is reversible~\cite[Question (2), p. 410]{dinasso-tachtsis}. Another such question, of fundamental importance to this paper, is~\cite[Question (5), p. 410]{dinasso-tachtsis} whether the existence of a nonprincipal ultrafilter on $\omega$ already implies the existence of an idempotent ultrafilter; this question was also asked by Tachtsis~\cite[Question 4]{tachtsis-ellis}. Note that, by~\cite[Theorem 2.1 (ii)]{tachtsis-ellis}, it is not possible to obtain a negative answer to that question by means of a Fraenkel--Mostowski permutation model of $\zfa$, so symmetric submodels of forcing extensions must be used; Tachtsis himself~\cite[pp. 565--568]{tachtsis-ellis} considered the model obtained by forcing with $[\omega]^\omega/\mathrm{Fin}$ over Feferman's model $\mathcal M$ (in which there are no nonprincipal ultrafilters on $\omega$) and showed that the corresponding generic filter is a nonprincipal nonidempotent ultrafilter on $\omega$ in M[G], but it is still open whether in this particular model there are any idempotent ultrafilters (see~\cite[Problem 5]{tachtsis-ellis}; in this paper we consider a similar model, in the sense that it is also obtained by forcing with $[\omega]^\omega/\mathrm{Fin}$ over a certain specific model of $\zf$).

The authors of this paper were originally motivated by answering the two questions of Di Nasso and Tachtsis's mentioned in the previous paragraph. We, however, ended up along the way delving deep into some $\zfc$ results about Q-points, selective ultrafilters, and idempotents that are of independent interest; the paper deals with these results first. In Section 2, we prove that, starting with a Q-point and building new ultrafilters by means of iterating the addition operation (appropriately defined to deal with transfinite iterations), one never builds an idempotent ultrafilter. In Section 3 we prove a similar-looking result whose proof is, nevertheless, significantly more complex: starting with a selective ultrafilter and building new ultrafilters by any combination of taking Blass--Frol\'{\i}k sums and Rudin--Keisler images, one never gets to build an idempotent ultrafilter. As a corollary of this we derive that, if a certain conjecture of Blass (to be stated in that section) holds, then in $\zf$ plus the existence of nonprincipal ultrafilters one cannot prove the existence of idempotent ultrafilters, so this constitutes a partial solution to the second question of Di Nasso and Tachtsis (partial because it depends on Blass's conjecture being true). Finally, in Section 4 we move completely to the $\zf$ terrain, by using standard forcing and symmetric model constructions to prove the consistency with $\zf$ of the statement that every additive filter can be extended to an idempotent ultrafilter yet $\utr$ fails (hence the statement that every additive filter can be extended to an additive ultrafilter does not imply $\utr$ under $\zf$); of course this answers the first question of Di Nasso and Tachtsis's.

The paper is organized so as to be useful to the largest possible audience: the reader who is exclusively interested in the standard study of the algebra in the \v{C}ech--Stone compactification under $\zfc$ can read only Sections 2 and 3; on the other hand, the reader exclusively interested in choiceless set theory and independence proofs in $\zf$ can read Sections 3 and 4 only (and, it goes without saying, anyone without strong opposition to any of these two topics can gleefully read all of the paper).

\section*{Acknowledgements}

The first and third authors were partially supported by internal grant SIP-20230355; additionally, the first author was supported by internal grant SIP-20240886 (both grants are from Instituto Polit\'ecnico Nacional). The second and third authors contributed to this work as part of their master's studies, supported by Conahcyt scholarships numbers 829938 and 820186, respectively.

\section{No idempotents from a Q-point}

In this section we prove that a Q-point on $\omega$ generates a free semigroup on $\beta\omega$, in the sense that, if $p$ is the Q-point, then the mapping $n\longmapsto\underbrace{p+\cdots+p}_{n\text{ times}}$ is injective. We in fact show that this holds not only of the original Q-point, but also of any other ultrafilter obtainable from that Q-point by means of iterated sums (even transfinitely). An immediate corollary will be that all such ultrafilters are not idempotents.

\subsection{Generating a free semigroup}

Recall that, given an $A\subseteq\omega$, its {\bf set of finite sums} is defined as
\begin{equation*}
    \fs(A)=\left\{\sum_{a\in F}a\bigg|F\subseteq A\text{ is finite nonempty}\right\}.
\end{equation*}
The following notions will be crucial in studying sets of finite sums $\fs(A)$ generated by certain sets $A$.

\begin{definition}
Let $A\subseteq\omega$.
\begin{enumerate}
    \item For $n\in\mathbb N$, we define $\fs_n(A)=\{a_1+\cdots+a_n\big|a_1,\ldots,a_n\in A\text{ are distinct}\}$.
    \item We say that $A$ {\bf has uniqueness of sums} if, whenever $a_1<\cdots<a_n$ and $b_1<\cdots<b_m$ are all elements of $A$, the equality $a_1+\cdots+a_n=b_1+\cdots+b_m$ implies that $n=m$ and $a_i=b_i$ for all $i$.
\end{enumerate}
\end{definition}

Hence we have $\fs(A)=\bigcup_{n\in\mathbb N}\fs_n(A)$, and $A$ has uniqueness of sums precisely when every element of $\fs(A)$ can be represented as a sum of elements of $A$ in a unique way (up to permutation).

\begin{definition}
Let $u\in\beta\omega$. Then,
\begin{enumerate}
    \item Given an $n\in\mathbb N$, we use the notation
\begin{equation*}
    u^n=\underbrace{u+\cdots+u}_{n\text{ times}}.
\end{equation*}
    \item We say that $u$ {\bf generates a free subsemigroup} if, for all $n,m\in\mathbb N$, we have $u^n=u^m$ if and only if $n=m$.
\end{enumerate}
\end{definition}

We use the notation $u^n$ instead of the other, at first sight more obvious, choice of $nu$ because the latter could easily be confused with the ultrafilter generated by the sets of the form $nA=\{na\big|a\in A\}$ with $A\in u$ (the Rudin--Keisler image of $u$ under the mapping $k\longmapsto nk$), which is not the intended object of study. The rest of this subsection establishes a very useful sufficient condition for an ultrafilter to generate a free subsemigroup.

\begin{lemma}\label{lem:fsnainun}
Let $u\in\beta\omega$ be a nonprincipal ultrafilter, let $n\in\mathbb N$, and let $A\in u$. Then $\fs_n(A)\in u^n$.
\end{lemma}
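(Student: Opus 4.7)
The plan is to proceed by induction on $n \in \mathbb{N}$, using the recursive definition $u^{n+1} = u^n + u$ together with the definition of the sum of ultrafilters.

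The base case $n=1$ is immediate, since $\mathrm{FS}_1(A) = A \in u = u^1$. For the inductive step, assume $\mathrm{FS}_n(A) \in u^n$ for every $A \in u$ and every nonprincipal $u$. By definition of $+$ on $\beta\omega$, in order to show that $\mathrm{FS}_{n+1}(A) \in u^n + u$ it suffices to check that
\begin{equation*}
\left\{y < \omega \,\bigg|\, \mathrm{FS}_{n+1}(A) - y \in u\right\} \in u^n,
\end{equation*}
where $B - y = \{z < \omega \mid y + z \in B\}$. The key observation I would make is that this set of $y$'s contains $\mathrm{FS}_n(A)$: indeed, if $y = a_1 + \cdots + a_n$ with $a_1 < \cdots < a_n$ in $A$, then for any $a \in A \setminus \{a_1, \ldots, a_n\}$ we have $y + a \in \mathrm{FS}_{n+1}(A)$, so $\mathrm{FS}_{n+1}(A) - y \supseteq A \setminus \{a_1, \ldots, a_n\}$. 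Since $u$ is nonprincipal, removing finitely many points from $A \in u$ keeps the set in $u$, so $\mathrm{FS}_{n+1}(A) - y \in u$ for every $y \in \mathrm{FS}_n(A)$.

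Combining this with the inductive hypothesis $\mathrm{FS}_n(A) \in u^n$ closes the induction and gives $\mathrm{FS}_{n+1}(A) \in u^n + u = u^{n+1}$.

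There is essentially no hard step here; the only thing to be careful about is the handedness of the sum (to ensure that the formula $B \in v + w \Leftrightarrow \{y : B - y \in w\} \in v$ matches whatever convention the paper is using for $u+v$) and the fact that a set of the form $a_1 + \cdots + a_n$ admits only finitely many representations from $A$, so the ``bad'' elements to be avoided form a finite set and can be excluded while keeping the remainder in $u$. No deep combinatorics, uniqueness of sums, or Q-point hypothesis is required for this particular lemma; those will presumably enter in the subsequent proof that $u$ generates a free subsemigroup.
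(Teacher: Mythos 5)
Your proof is correct and follows essentially the same induction as the paper's, hinging on the same key observation that adjoining one further element of $A$, distinct from the finitely many already used (available because $u$ is nonprincipal), turns a sum of $n$ distinct elements into a sum of $n+1$ distinct elements. The only difference is presentational: the paper runs the inductive step by showing that $\fs_n(A)$ meets every member of $u^n$ and then invokes maximality of the ultrafilter, whereas you verify membership of $\fs_{n+1}(A)$ in $u^n+u$ directly from the observation that $\{y<\omega\mid \fs_{n+1}(A)-y\in u\}\supseteq\fs_n(A)$ together with upward closure, which is if anything slightly more streamlined, and your handedness bookkeeping does match the paper's convention for $u+v$.
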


\begin{proof}
In order to prove that $\fs_n(A)$ is an element of $u^n$ we will show that $\fs_n(A)$ is positive with respect to $u^n$, that is, for every $B\in u^n$ we have $\fs_n(A)\cap B\neq\varnothing$ (in particular, it is not possible to have $\omega\setminus\fs_n(A)\in u^n$, which immediately yields the desired conclusion). This statement will be proved by induction on $n\in\mathbb N$; the result is obvious if $n=1$. Suppose now that the result is established for $n$ and let $B\in u^{n+1}=u^n+u$; by the definition of ultrafilter sum this means that
    \begin{equation*}
        C=\{x\in\omega\big|\{y\in\omega\big|x+y\in B\}\in u\}\in u^n.
    \end{equation*}
By induction hypothesis we have $\fs_n(A)\cap C\neq\varnothing$, so there is an element in that intersection; namely, there are $a_1<\cdots<a_n$ elements of $A$ such that
    \begin{equation*}
        D=\{y\in\omega\big|a_n<y\text{ and }a_1+\cdots+a_n+y\in B\}\in u.
    \end{equation*}
Therefore, the set $D\cap A$ belongs to $u$ and in particular is nonempty, meaning that we may pick an $a_{n+1}\in A$ such that $a=a_1+\cdots+a_n+a_{n+1}\in B$. In other words, $a\in\fs_{n+1}(A)\cap B$.
\end{proof}

We can now immediately deduce the following result.

\begin{corollary}
    Let $u\in\beta\omega$ be such that there is an $A\in u$ with uniqueness of sums. Then $u$ generates a free subsemigroup.
\end{corollary}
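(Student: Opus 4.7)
The plan is to leverage Lemma~\ref{lem:fsnainun} twice and then invoke the uniqueness-of-sums hypothesis to force equal indices.

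Fix $A\in u$ with uniqueness of sums, and suppose $u^n=u^m$ for some $n,m\in\mathbb N$; I want to conclude $n=m$. First I apply Lemma~\ref{lem:fsnainun} to get $\fs_n(A)\in u^n$ and $\fs_m(A)\in u^m$. Since $u^n=u^m$, both sets belong to the same ultrafilter, so their intersection $\fs_n(A)\cap\fs_m(A)$ is also a member of that ultrafilter; in particular, it is nonempty (nonprincipality of $u$, inherited by $u^n$, is not even needed for this final step, only that it lies in an ultrafilter).

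Now I pick any $x\in\fs_n(A)\cap\fs_m(A)$. By definition of $\fs_n$ and $\fs_m$, there exist distinct elements $a_1,\ldots,a_n\in A$ and distinct elements $b_1,\ldots,b_m\in A$ with $a_1+\cdots+a_n=x=b_1+\cdots+b_m$. Reordering so that $a_1<\cdots<a_n$ and $b_1<\cdots<b_m$, the uniqueness-of-sums property of $A$ yields directly that $n=m$ (and incidentally $a_i=b_i$ for every $i$, though only the equality of lengths is needed).

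The argument is essentially a two-line deduction from the preceding lemma and the definition of uniqueness of sums, so there is no real obstacle; the only thing to be careful about is ensuring that $u^n$ is itself an ultrafilter (which it is, as a sum of ultrafilters in $\beta\omega$), so that membership of both $\fs_n(A)$ and $\fs_m(A)$ in $u^n=u^m$ implies their intersection is nonempty.
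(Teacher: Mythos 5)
Your argument is correct and is essentially the paper's own proof: apply Lemma~\ref{lem:fsnainun} to place $\fs_n(A)$ and $\fs_m(A)$ in $u^n$ and $u^m$, then use uniqueness of sums to see these sets cannot share an element unless $n=m$ (the paper states this as disjointness for $n\neq m$; you phrase it by extracting a common element and comparing representations, which is the same deduction).
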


\begin{proof}
    For $n,m\in\mathbb N$ we have $\fs_n(A)\in u^n$ and $\fs_m(A)\in u^m$ by Lemma~\ref{lem:fsnainun}. Since $A$ has uniqueness of sums,
    $\fs_n(A)\cap\fs_m(A)=\varnothing$ unless $n=m$, therefore, $u^n\neq u^m$ unless $n=m$.
\end{proof}

\subsection{Adequate sets}

We now develop a slightly more involved way of showing that certain ultrafilters generate free subsemigroups. Recall that, given a sequence $\langle x_n\big|n<\omega\rangle$, a {\em sum subsystem} of the sequence is another (finite or infinite) sequence $\langle y_n\big|n\in I\rangle$ (with $I\in\omega+1$) such that there is a block sequence of finite sets $\langle H_n\big|n\in I\rangle$ (where ``block sequence'' means that $\max(H_n)<\min(H_{n+1})$ for all $n$) satisfying $y_n=\sum_{i\in H_n}x_i$ for each $n$.

\begin{definition}\label{def:goodset}
     Let $X\subseteq\omega$ be an infinite set, and let $A\subseteq\omega$. Let $\langle x_n\big|n<\omega\rangle$ be the increasing enumeration of the elements of $X$.
     \begin{enumerate}
         \item If $A\subseteq\fs(X)$, and $n\in\mathbb N$, we will denote the set
         \begin{equation*}
             \fs_n^X(A)=\left\{a_1+\cdots+a_n\in\fs_n(A)\big|\langle a_1,\ldots,a_n\rangle\text{ is a sum subsystem of }\langle x_n\big|n<\omega\rangle\right\},
         \end{equation*}
        and we denote $\fs^X(A)=\bigcup_{n\in\mathbb N}\fs_n^X(A)$.
        \item We say that $A$ is {\bf $X$-adequate} if $A\subseteq\fs(X)$ and, for every subsequence $\langle x_{n_k}\big| k<\omega\rangle$ of $\langle x_n\big|n<\omega\rangle$, there exists a unique $m\in\omega$ such that $x_{n_1}+\cdots+x_{n_m}\in A$.
     \end{enumerate}
\end{definition}

The main intuition behind the first part of Definition~\ref{def:goodset} is that, whenever $A\subseteq \fs(X)$, typically $A$ will not have the uniqueness of sums. For example, if $A=\{x_0+x_n\big|n\geq 1\}\cup\{x_n\big|n\geq 1\}$, then (note that $A$ is $X$-adequate, although) $x_0+x_1+x_2$ belongs to $\fs(A)$ but it is not uniquely represented as such. Restricting ourselves to $\fs^X(A)$ solves this issue, as seen in the next Lemma.

\begin{lemma}\label{lem:uniquenessfsnx}
    If $X$ has the uniqueness of sums and $A$ is $X$-adequate, then each element of $\fs^X(A)$ is uniquely represented as such. That is, whenever $\langle a_1,\ldots,a_n\rangle$ and $\langle b_1,\ldots,b_m\rangle$ are two sum subsystems of $\langle x_n\big|n<\omega\rangle$, then $a_1+\cdots+a_n=b_1+\cdots+b_m$ implies that $n=m$ and $a_i=b_i$ for all $i$.
\end{lemma}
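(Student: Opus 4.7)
The plan is to reduce the problem to an application of $X$-adequacy on a suitably chosen infinite subsequence. Write $X = \{x_0 < x_1 < \cdots\}$ in increasing order, and suppose we have two sum subsystems $\langle a_1, \ldots, a_n\rangle$ and $\langle b_1, \ldots, b_m\rangle$ of $\langle x_n : n < \omega\rangle$, say $a_i = \sum_{j \in H_i} x_j$ and $b_k = \sum_{j \in K_k} x_j$, with $H_1 < H_2 < \cdots < H_n$ and $K_1 < K_2 < \cdots < K_m$ both block sequences of finite subsets of $\omega$, such that $a_1 + \cdots + a_n = b_1 + \cdots + b_m$. I would argue by induction on $n + m$ that $n = m$ and $a_i = b_i$ for all $i$, with the base case $n = m = 1$ being immediate from the uniqueness of sums of $X$.

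For the inductive step, first apply the uniqueness of sums of $X$ to the equality $\sum_{j \in H_1 \cup \cdots \cup H_n} x_j = \sum_{j \in K_1 \cup \cdots \cup K_m} x_j$ to conclude that $H := H_1 \cup \cdots \cup H_n = K_1 \cup \cdots \cup K_m$. Enumerate $H$ in increasing order as $e_1 < e_2 < \cdots < e_N$; because both $\langle H_i\rangle$ and $\langle K_k\rangle$ are block sequences, $H_1$ and $K_1$ must each consist of an initial segment of this enumeration, so $H_1 = \{e_1, \ldots, e_s\}$ and $K_1 = \{e_1, \ldots, e_t\}$ where $s = |H_1|$ and $t = |K_1|$.

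Now I extend the finite sequence $e_1, \ldots, e_N$ to an infinite strictly increasing sequence of natural numbers $n_1 < n_2 < \cdots$ by setting $n_k = e_k$ for $k \leq N$ and picking any $n_k > e_N$ for $k > N$ (for instance, $n_k = e_N + (k - N)$ if that exceeds $e_N$, or more simply the successive integers above $e_N$). Then $\langle x_{n_k} : k < \omega\rangle$ is a subsequence of $\langle x_n : n < \omega\rangle$, and by $X$-adequacy of $A$ there is a unique $\ell \in \omega$ with $x_{n_1} + \cdots + x_{n_\ell} \in A$. But both $a_1 = x_{e_1} + \cdots + x_{e_s} = x_{n_1} + \cdots + x_{n_s}$ and $b_1 = x_{e_1} + \cdots + x_{e_t} = x_{n_1} + \cdots + x_{n_t}$ lie in $A$, forcing $s = t$, and hence $H_1 = K_1$ and $a_1 = b_1$. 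Cancelling this common summand gives $a_2 + \cdots + a_n = b_2 + \cdots + b_m$, still an equality of sum subsystems of $\langle x_n : n < \omega\rangle$ (obtained by restricting to the block sequences $H_2 < \cdots < H_n$ and $K_2 < \cdots < K_m$), and the inductive hypothesis finishes the proof.

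The only subtlety I anticipate is a notational one: making sure that after cancelling $a_1 = b_1$ the remaining data is again a pair of sum subsystems in the sense of Definition~\ref{def:goodset}, and being unambiguous about whether the indexing of subsequences in the definition of $X$-adequacy starts at $0$ or $1$. Neither of these is a genuine obstacle; the heart of the argument is simply that $X$-adequacy ensures at most one initial-segment sum along any infinite subsequence lies in $A$, and this pins down $|H_1|$ uniquely.
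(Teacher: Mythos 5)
Your argument is correct and is essentially the paper's proof: both use uniqueness of sums of $X$ to identify the two combined index sets, then apply $X$-adequacy along an infinite subsequence having the first block as an initial segment to force $|H_1|=|K_1|$, and then peel off $a_1=b_1$ and repeat (the paper phrases the repetition as "continuing this process" rather than as your induction on $n+m$ with cancellation). The only point worth a word in a write-up is the degenerate case where cancellation empties one side before the other, which is immediately impossible (a nonempty sum of elements of $X$ is positive, since $0\notin X$ when $X$ has uniqueness of sums), so there is no gap.
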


\begin{proof}
    The hypotheses imply that there are $i_1<\ldots<i_{k_1}<i_{k_1+1}<\ldots<i_{k_2}<\ldots<i_{k_n}$ and $j_1<\ldots<j_{l_1}<j_{l_1+1}<\ldots<j_{l_2}<\ldots<j_{l_m}$ such that for each $t$, $a_t=x_{i_{k_{t-1}+1}}+\cdots+x_{i_{k_t}}$ and $b_t=x_{j_{l_{t-1}+1}}+\cdots+x_{j_{l_t}}$ (with the convention that $k_0=l_0=0$). From $\sum_{i=1}^n a_i=\sum_{j=1}^m b_j$ we obtain $\sum_{t=1}^{k_n} x_{i_t}=\sum_{t=1}^{l_m}x_{j_t}$, and from here (since $X$ has uniqueness of sums) we can conclude that $k_n=l_m$ and each $i_t=j_t$. Now consider any subsequence of $\langle x_n\big|n<\omega\rangle$ containing the sequence $\langle x_{i_t}\big|t\leq k_1\rangle$ as an initial segment. Since $A$ is $X$-adequate and $a_1,b_1\in A$, we conclude $k_1=l_1$; therefore $a_1=b_1$. Now considering any subsequence of $\langle x_n\big|n<\omega\rangle$ containing $\langle x_{i_t}\big|k_1<t\leq k_2\rangle$ as an initial segment, and using that $A$ is $X$-adequate and $a_2,b_2\in A$, we conclude that $k_2=l_2$ and therefore $a_2=b_2$. Continuing with this process, we may conclude that $m=n$ and $k_t=l_t$ for all $t$, and consequently $a_t=b_t$ for each $t$. 
\end{proof}

\begin{definition}
Given an infinite set $X\subseteq\omega$ and an ultrafilter $u\in\beta\omega$, we say that $u$ is {\bf $X$-adequate} if there exists an $X$-adequate set $A$ with $A\in u$ and, for each cofinite subset $Y\subseteq X$, we have $\fs(Y)\in u$. In this case, we will say that the set $A$ {\bf witnesses} that $u$ is $X$-adequate.
\end{definition}

It is not hard to see that, if $A$ is $X$-adequate and $Y\subseteq X$ is infinite, then $A\cap\fs(Y)$ is $Y$-adequate. From this observation, the reader may conclude that an ultrafilter $u$ is $X$-adequate if and only if for each cofinite $Y\subseteq X$ there is an $A_Y\in u$ such that $A_Y$ is $Y$-adequate.

\begin{lemma}\label{lem:fsnx}
    Let $X\subseteq\omega$ be an infinite set with uniqueness of sums, let $u\in\beta\omega$ be an $X$-adequate ultrafilter, and let $A\in u$ witness this fact. Then for each $n\in\mathbb N$, $\fs_n^X(A)\in u^n$.
\end{lemma}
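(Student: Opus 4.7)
The plan is to mimic the proof of Lemma~\ref{lem:fsnainun} by induction on $n$, with the added feature that, at each step, we have to ensure the extra summand is chosen in a way that respects the block-sequence requirement built into $\fs_n^X$. The base case $n=1$ is immediate since $\fs_1^X(A)=A\in u$ (every element of $A\subseteq\fs(X)$ is trivially a one-term sum subsystem). For the inductive step, I will use the ultrafilter-sum definition of $u^{n+1}=u^n+u$, so the goal reduces to showing
\begin{equation*}
    E=\left\{x\in\omega\,\big|\,\{y\in\omega\,\big|\,x+y\in\fs_{n+1}^X(A)\}\in u\right\}\in u^n,
\end{equation*}
and the cleanest route is to prove $\fs_n^X(A)\subseteq E$, which suffices by the induction hypothesis.

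To establish that inclusion, fix $x\in\fs_n^X(A)$. By Lemma~\ref{lem:uniquenessfsnx}, $x$ has a unique representation $x=a_1+\cdots+a_n$ where $\langle a_1,\ldots,a_n\rangle$ is a sum subsystem of $\langle x_n\,|\,n<\omega\rangle$ coming from a block sequence $H_1<\cdots<H_n$ of finite subsets of $\omega$. Set $k=\max(H_n)$ and let $Y=\{x_j\,|\,j>k\}$; this is a cofinite subset of $X$, so by $X$-adequacy we have $\fs(Y)\in u$, and therefore $A\cap\fs(Y)\in u$. For any $y\in A\cap\fs(Y)$, write $y=\sum_{j\in H}x_j$ with $H\subseteq\{j\,|\,j>k\}$; then appending $H_{n+1}=H$ (and $a_{n+1}=y$) to the original block sequence keeps it a block sequence, so $\langle a_1,\ldots,a_{n+1}\rangle$ is a sum subsystem of $\langle x_n\,|\,n<\omega\rangle$, and hence $x+y\in\fs_{n+1}^X(A)$. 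This shows $A\cap\fs(Y)\subseteq\{y\,|\,x+y\in\fs_{n+1}^X(A)\}$, so the latter set is in $u$, i.e.\ $x\in E$, as required.

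The only subtlety I expect is the legitimacy of the ``cofinite tail'' argument: we need to confirm that the set $Y=\{x_j\,|\,j>k\}$ really is cofinite in $X$ (which is true because $k$ depends only on the representation of $x$, so $Y$ misses only finitely many elements of $X$), and that the $X$-adequacy of $u$ hands us $\fs(Y)\in u$ directly from the definition. Once that is in place, the inductive step reduces to the straightforward block-sequence concatenation, uniqueness of representation (Lemma~\ref{lem:uniquenessfsnx}) being used only implicitly to guarantee $k$ is well-defined. No genuinely hard step should arise, since the combinatorial heart of the argument is the same as in Lemma~\ref{lem:fsnainun}; the main extra bookkeeping is ensuring the block structure is preserved when $y$ is added.
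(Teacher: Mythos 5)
Your proof is correct, and it differs from the paper's in one structural respect. The paper (mimicking Lemma~\ref{lem:fsnainun}) argues via positivity: it shows by induction that $\fs_n^X(A)$ meets every $B\in u^n$ and then invokes maximality of the ultrafilter to conclude $\fs_n^X(A)\in u^n$. You instead prove membership directly, establishing the pointwise inclusion $\fs_n^X(A)\subseteq E$ where $E=\{x\in\omega : \{y\in\omega : x+y\in\fs_{n+1}^X(A)\}\in u\}$, so that the inductive hypothesis plus upward closure of $u^n$ finishes the step. The combinatorial heart is identical in both arguments: given a sum-subsystem representation of $x$, pass to the cofinite tail $Y$ of $X$ beyond its support, use $X$-adequacy to get $\fs(Y)\in u$, and pick the next summand in $A\cap\fs(Y)$. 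What your version buys is a slightly stronger intermediate statement (every element of $\fs_n^X(A)$, not merely one lying in a prescribed intersection, has $u$-many admissible continuations) and it avoids carrying an arbitrary $B\in u^{n+1}$ through the induction; the paper's positivity formulation is the natural one when one only knows how to exhibit witnesses inside a given set. Two small remarks: the uniqueness of the representation of $x$ (Lemma~\ref{lem:uniquenessfsnx}) is not actually needed for your step, since any sum-subsystem representation yields a suitable tail $Y$; and you should note, as follows at once from the uniqueness of sums of $X$ and the disjointness of the supports, that the new summand $y$ is distinct from $a_1,\ldots,a_n$, so that $x+y$ indeed lies in $\fs_{n+1}(A)$ as the definition of $\fs_{n+1}^X(A)$ requires.
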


\begin{proof}
    Mimicking the proof of Lemma~\ref{lem:fsnainun}, we prove by induction on $n\in\mathbb N$ that for every $B\in u^n$ $\fs_n^X(A)\cap B\neq\varnothing$, the case $n=1$ being trivial. So, taking $B\in u^{n+1}=u^n+u$, we have by definition that
        \begin{equation*}
        C=\{x\in\omega\big|\{y\in\omega\big|x+y\in B\}\in u\}\in u^n.
    \end{equation*}
By induction hypothesis we can pick an $a\in\fs_n^X(A)\cap C$, that is, it is possible to write $a=a_1+\cdots+a_n$ such that $\langle a_1,\cdots,a_n\rangle$ is a sum subsystem of $\langle x_n\big|n<\omega\rangle$, and furthermore
    \begin{equation*}
        D=\{y\in\omega\big|a_n<y\text{ and }a_1+\cdots+a_n+y\in B\}\in u.
    \end{equation*}
The hypothesis implies that, if $Y$ is any cofinite subset of $X$, then $\fs(Y)\cap D\cap A$ belongs to $u$ and therefore it is nonempty, so we can pick an $a_{n+1}$ in that set. In particular, if $Y$ is the subset of $X$ containing only those elements larger than any of the ones appearing in the expressions of  $a_1,\ldots,a_n$ (which is well-defined since $X$ has uniqueness of sums), then $\langle a_1,\ldots,a_n,a_{n+1}\rangle$ will be a sum subsystem of $\langle x_n\big|n<\omega\rangle$; this way, $a_1+\cdots+a_n+a_{n+1}\in\fs_{n+1}^X(A)\cap B$.
\end{proof}

\begin{corollary}\label{prop:qpuntonoidem}
    Let $X\subseteq\omega$ be a set with uniqueness of sums, and let $u\in\beta\omega$ be an $X$-adequate ultrafilter. Then $u$ generates a free subsemigroup.
\end{corollary}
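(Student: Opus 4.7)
The plan is to directly mimic the proof of the earlier corollary (the one deducing that an ultrafilter containing a set with uniqueness of sums generates a free subsemigroup), but now using the $X$-adequate substitutes already established in Lemmas~\ref{lem:uniquenessfsnx} and~\ref{lem:fsnx}.

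First I would fix an $X$-adequate set $A\in u$ witnessing that $u$ is $X$-adequate. Given any $n,m\in\mathbb N$ with $n\neq m$, I would invoke Lemma~\ref{lem:fsnx} to conclude that $\fs_n^X(A)\in u^n$ and $\fs_m^X(A)\in u^m$. The only remaining task is then to show that these two sets are disjoint, since no ultrafilter can contain two disjoint sets, forcing $u^n\neq u^m$.

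The disjointness is precisely where Lemma~\ref{lem:uniquenessfsnx} enters: if some element $a$ were simultaneously in $\fs_n^X(A)$ and in $\fs_m^X(A)$, then it would admit two representations as a sum of a sum subsystem of the increasing enumeration of $X$, one of length $n$ and another of length $m$. By Lemma~\ref{lem:uniquenessfsnx}, such representations must coincide in length, so $n=m$, a contradiction. This completes the argument.

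I do not anticipate any real obstacle here: once Lemmas~\ref{lem:uniquenessfsnx} and~\ref{lem:fsnx} are in place, the deduction is essentially a two-line bookkeeping argument. The only conceptual point to keep in mind is that $X$-adequacy of $u$ (together with uniqueness of sums on $X$) replaces the stronger hypothesis of uniqueness of sums on some $A\in u$, which is what makes this corollary strictly more useful than the one proved at the end of the previous subsection.
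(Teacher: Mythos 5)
Your argument is correct and is essentially identical to the paper's proof: both fix a witnessing $X$-adequate set $A\in u$, apply Lemma~\ref{lem:fsnx} to place $\fs_n^X(A)\in u^n$ and $\fs_m^X(A)\in u^m$, and then use Lemma~\ref{lem:uniquenessfsnx} to see these sets are disjoint for $n\neq m$, so $u^n\neq u^m$. No gaps; nothing further needed.
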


\begin{proof}
The hypotheses imply that, given two distinct $n,m\in\mathbb N$, by Lemma~\ref{lem:fsnx} we have $\fs_n^X(A)\in u^n$ and $\fs_m^X(A)\in u^m$; now Lemma~\ref{lem:uniquenessfsnx} guarantees that $\fs_n^X(A)$ is disjoint from $\fs_m^X(A)$ and therefore $u^n\neq u^m$.
\end{proof}

\subsection{Adding ultrafilters transfinitely}

It is easy to see that, whenever $u$ generates a free subsemigroup, so does every iterated addition of $u$ (i.e. every ultrafilter of the form $u^n$). The aim of this subsection is to show that, under certain circumstances, the same can be said of ``transfinite'' iterated additions of $u$. In order to make precise the notion of ``adding transfinitely'', we will use Rudin--Keisler images, as well as what is known as {\it Blass--Frol\'{\i}k sums}, developed by Frolík~\cite{frolik} and independently by Blass~\cite[Chapter IV]{blass-thesis}.

\begin{definition}
     Given $u\in\beta\omega$ and a sequence $\langle u_n\mid n<\omega\rangle$ of elements of $\beta\omega$, the {\bf Blass--Frolík sum} of the sequence of $u_n$ indexed by $u$ is the ultrafilter on $\omega\times\omega$ defined by
     \begin{equation*}
         \usum{u}{n}u_n=\left\{A\sub\omega\times\omega\big|\left\{n<\omega\big|\{m<\omega\big|(n,m)\in A\}\in u_n\right\}\in u\right\}.
     \end{equation*}
\end{definition}

Note that, in the particular case where the sequence of $u_n$ is constant, say with constant value $v$, then $\usum{u}{n}u_n$ is what is usually known as the {\it tensor product}, or the {\it Fubini product}, of $u$ and $v$. It is not hard to work out from the definition that sets of the form $\{(n,m)\in\omega\times\omega\big|n\in A\text{ and } m\in A_n\} $, where $A\in u$ and each $A_n\in u_n$, form a basis for the ultrafilter $\usum{u}{n}u_n$.

\begin{definition}
     \label{def:imagenrksuma}
   For an ultrafilter $u\in\beta\omega$ and a sequence $\langle u_n\big| n<\omega\rangle$ of ultrafilters on $\omega$, we define the {\bf sum} of the $u_n$ indexed by $u$, denoted $\uadd{u}{n}u_n$ to be the Rudin--Keisler image of $\usum{u}{n}{u_n}$ under the addition mapping (that is, the mapping $:\omega\times\omega\longrightarrow\omega$ given by $(n,m)\longmapsto n+m$).
\end{definition}

So, by definition, $\uadd{u}{n}u_n=\left\{A\subseteq\omega\big|\{(n,m)\in\omega\times\omega\mid n+m\in A\}\in\usum{u}{n}u_n\right\}$; one can then easily work out that $\uadd{u}{n}u_n$ admits a basis of sets of the form $\{n+m\big|n\in A\text{ and } m\in A_n\} $, where $A\in u$ and each $A_n\in u_n$. In particular, considering an ultrafilter $v$ and the sequence with constant value $v$ we obtain $\uadd{u}{n}{v}=u+v$, thus recovering the usual sum operation in $\beta\omega$. Now, the key fact about indexed sums is that they preserve adequateness, in the precise sense stated below.

\begin{theorem}\label{thm:preserve-goodness}
    Let $X\subseteq\omega$ be a set with uniqueness of sums, and let $u\in\beta\omega$ and $\langle u_n\big|n<\omega\rangle$ be a sequence of ultrafilters on $\omega$. If $u$ and each of the $u_n$ are $X$-adequate, then so is $\uadd{u}{n}u_n$.
\end{theorem}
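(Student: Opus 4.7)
The plan is to exhibit explicitly an $X$-adequate set $A^*\in w$, where $w=\uadd{u}{n}u_n$, and separately verify that $\fs(Y)\in w$ for every cofinite $Y\subseteq X$. Let $A\in u$ witness $X$-adequacy of $u$, and for each $n<\omega$ let $B_n\in u_n$ witness $X$-adequacy of $u_n$. By Lemma~\ref{lem:uniquenessfsnx}, each $n\in A$ has a unique representation $n=x_{i_1^n}+\cdots+x_{i_{k_n}^n}$ as a sum subsystem of $\langle x_j:j<\omega\rangle$; set $Y_n=\{x_j:j>i_{k_n}^n\}$, a cofinite subset of $X$. The $X$-adequacy of $u_n$ gives $\fs(Y_n)\in u_n$, so $A_n:=B_n\cap\fs(Y_n)\in u_n$; for $n\notin A$ pick any $A_n\in u_n$. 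Define $A^*=\{n+m:n\in A,\,m\in A_n\}$, which belongs to $w$ by the basis description of the indexed sum recorded in Definition~\ref{def:imagenrksuma}.

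To show that $A^*$ is $X$-adequate, first note that $A^*\subseteq\fs(X)$: any $n\in A$ is a sum subsystem $x_{i_1}+\cdots+x_{i_k}$ of $X$, any $m\in A_n\subseteq\fs(Y_n)$ uses only $x_j$ with $j>i_k$, so $n+m$ is again a sum subsystem of $\langle x_j\rangle$. For the uniqueness clause of Definition~\ref{def:goodset}, fix a subsequence $\langle x_{n_k}:k<\omega\rangle$. The $X$-adequacy of $A$ provides a unique $p$ with $s:=x_{n_1}+\cdots+x_{n_p}\in A$, and because $X$ has uniqueness of sums the sum-subsystem representation of $s$ is forced to be exactly $x_{n_1}+\cdots+x_{n_p}$, so $Y_s=\{x_j:j>n_p\}$. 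Applying the $X$-adequacy of $B_s$ to the tail subsequence $\langle x_{n_{p+k}}:k\geq 1\rangle$ produces a unique $q$ with $t:=x_{n_{p+1}}+\cdots+x_{n_{p+q}}\in B_s$; automatically $t\in\fs(Y_s)$, hence $t\in A_s$, and $s+t=x_{n_1}+\cdots+x_{n_{p+q}}\in A^*$.

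For uniqueness of the length, suppose $x_{n_1}+\cdots+x_{n_M}=a+b$ with $a\in A$ and $b\in A_a$. Since $b\in\fs(Y_a)$, every $x_j$ appearing in $b$ has index strictly greater than every index appearing in $a$; combined with uniqueness of sums in $X$, this forces $a=x_{n_1}+\cdots+x_{n_s}$ and $b=x_{n_{s+1}}+\cdots+x_{n_M}$ for some $s$. Then $s$ is pinned down uniquely by $X$-adequacy of $A$ applied to $\langle x_{n_k}:k<\omega\rangle$, and $M-s$ is pinned down by $X$-adequacy of $B_a$ applied to $\langle x_{n_{s+k}}:k\geq 1\rangle$, so $M$ is unique. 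Finally, for the condition $\fs(Y)\in w$ with $Y\subseteq X$ cofinite: for each $n\in\fs(Y)$, writing $n=\sum_{j\in F}x_j$ with $F\subseteq Y$ finite and $Y':=Y\setminus\{x_j:j\in F\}$ still cofinite in $X$, we get $\fs(Y')\in u_n$ by $X$-adequacy, and any $m\in\fs(Y')$ satisfies $n+m\in\fs(Y)$; combined with $\fs(Y)\in u$, this delivers $\fs(Y)\in w$ from the definition of the indexed sum.

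The main obstacle, I expect, is the uniqueness half of $X$-adequacy for $A^*$: the bookkeeping needed to force a decomposition $x_{n_1}+\cdots+x_{n_M}=a+b$ into an initial block plus a terminal block requires jointly exploiting uniqueness of sums in $X$ and the restriction $b\in\fs(Y_a)$, and it is precisely this interplay that motivates intersecting with $\fs(Y_n)$ when defining $A_n$.
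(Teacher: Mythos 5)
Your proof is correct and takes essentially the same route as the paper's: the same basic set $\{n+m : n\in A,\ m\in A_n\}$ with each $A_n\in u_n$ concentrated on $\fs(Y_n)$ for the tail $Y_n$ of $X$ beyond the terms appearing in $n$, with adequacy of $A$ controlling the initial block and adequacy of $A_n$ controlling the terminal block. Your explicit uniqueness-of-length argument and your direct verification that $\fs(Y)$ lies in the indexed sum for cofinite $Y\subseteq X$ are just slightly more detailed versions of the steps the paper compresses.
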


\begin{proof}
    Let $A\in u$ witness that $u$ is $X$-adequate. For each $n\in A$, let $Y_n\subseteq X$ be the (cofinite) subset of $X$ containing only those elements larger than any involved in the expression of $n$ as a finite sum of $X$; and let $A_n\in u_n$ witness that $u_n$ is $Y_n$-adequate. We define $B=\{n+m\big|n\in A\text{ and }m\in A_n\}\in\uadd{u}{n}u_n$. It is clear that $B\subseteq\fs(X)$; furthermore, if $\langle x_{n_k}\big|k<\omega\rangle$ is any subsequence of $\langle x_n\big|n<\omega\rangle$, then (since $A$ is $X$-adequate) there exists a unique $k_0$ such that $a=x_{n_0}+\cdots+x_{n_{k_0}}\in A$. Then we know that $\{ x_n\big|n>n_{k_0}\}=Y_a$ and so, since $A_a$ is $Y_a$-adequate, there exists a unique $k$ such that $b=x_{n_{k_0}+1}+\cdots+x_{n_k}\in A_a$. This means that $k$ is the unique number such that $x_{n_0}+\cdots+x_{n_k}=a+b\in B$, so that $B$ is $X$-adequate.
    To finish off, note that the preceding argument works for any $X$ that has the uniqueness of sums and such that there is an $A\in u$ that is $X$-adequate. Therefore, by running the same argument with an arbitrary cofinite $Y\subseteq X$ we would be building the set $B$ in such a way that $B\subseteq\fs(Y)$. Hence, $\fs(Y)\in u$, and we are done. 
\end{proof}

We now state another property of adequate transfinite sums. Begin by noting that, by definition, $\leqrk{\uadd{u}{n}u_n}{\usum{u}{n}u_n}$ for any ultrafilters $u,u_n$. The following proposition improves this whenever ``adequate'' ultrafilters are added.

\begin{proposition}\label{prop:goodnessimpliesrkequiv}
    Let $X\subseteq\omega$ be a set with uniqueness of sums, and let $u\in\beta\omega$ and $\langle u_n\big|n<\omega\rangle$ be a sequence of ultrafilters on $\omega$. If $u$ and each of the $u_n$ are $X$-adequate, then
    \begin{equation*}
        \eqrk{\usum{u}{n}u_n}{\uadd{u}{n}u_n}
    \end{equation*}
\end{proposition}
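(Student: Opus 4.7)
One direction, namely $\uadd{u}{n}u_n\leq_{\mathrm{RK}}\usum{u}{n}u_n$, is immediate from Definition~\ref{def:imagenrksuma}, since $\uadd{u}{n}u_n$ is by definition the Rudin--Keisler image of $\usum{u}{n}u_n$ under the addition map $\sigma\colon\omega\times\omega\longrightarrow\omega$, $(n,m)\longmapsto n+m$. So the task reduces to showing the reverse inequality, and by the Rudin--Solovay theorem it suffices to exhibit a set $C\in\usum{u}{n}u_n$ on which $\sigma$ is injective; then $\sigma\upharpoonright C$ is a bijection onto $B=\sigma[C]\in\uadd{u}{n}u_n$, and its inverse (extended arbitrarily) witnesses $\usum{u}{n}u_n\leq_{\mathrm{RK}}\uadd{u}{n}u_n$.

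The natural candidate is exactly the set built in the proof of Theorem~\ref{thm:preserve-goodness}. Let $A\in u$ witness the $X$-adequateness of $u$, for each $n\in A$ let $Y_n\sub X$ be the cofinite subset of $X$ consisting of those elements strictly larger than anything appearing in the unique $X$-expression of $n$, and let $A_n\in u_n$ witness the $Y_n$-adequateness of $u_n$ (which exists by the remark after the definition of adequateness for an ultrafilter). Then I take
\begin{equation*}
    C=\left\{(n,m)\in\omega\times\omega\big|n\in A\text{ and }m\in A_n\right\}\in\usum{u}{n}u_n,
\end{equation*}
and I set $B=\sigma[C]$, which is precisely the set shown to be $X$-adequate (and hence in $\uadd{u}{n}u_n$) in Theorem~\ref{thm:preserve-goodness}.

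The main step is verifying that $\sigma\upharpoonright C$ is injective. Suppose $(n,m),(n',m')\in C$ satisfy $n+m=n'+m'$. By construction, each of $n,n'$ is an initial block sum of the enumeration $\langle x_k\big|k<\omega\rangle$ of $X$, while $m\in\fs(Y_n)$ and $m'\in\fs(Y_{n'})$ consist of summands strictly larger than those forming $n$ or $n'$ respectively. Hence both $n+m$ and $n'+m'$ are sums of \emph{distinct} elements of $X$ coming from sum subsystems of $\langle x_k\big|k<\omega\rangle$. Since $X$ has uniqueness of sums, the multiset of $X$-summands of $n+m=n'+m'$ is uniquely determined; call it $x_{i_1}<\cdots<x_{i_\ell}$. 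Thus $n$ and $n'$ are both initial segments $x_{i_1}+\cdots+x_{i_j}$ and $x_{i_1}+\cdots+x_{i_{j'}}$ of this same sequence, both lying in $A$. But $A$ is $X$-adequate, so along any subsequence of the enumeration of $X$ there is a unique initial block sum landing in $A$; applied to the subsequence $\langle x_{i_t}\big|t\leq\ell\rangle$ this forces $j=j'$, hence $n=n'$ and $m=m'$.

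The only mild obstacle is this last injectivity argument, and it is essentially a rerun of the uniqueness argument of Lemma~\ref{lem:uniquenessfsnx}, so no further difficulty is expected; the appeal to Rudin--Solovay then closes the proof.
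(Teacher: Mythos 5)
Your proposal is correct and takes essentially the same route as the paper: the paper likewise reduces to exhibiting a member of $\usum{u}{n}u_n$ on which addition is injective (citing Hindman--Strauss, Theorem 8.17, where you invoke Rudin--Solovay), uses exactly the basic set $\{(a,b)\mid a\in A,\ b\in A_a\}$ from the proof of Theorem~\ref{thm:preserve-goodness}, and proves injectivity from uniqueness of sums for $X$ together with $X$-adequateness of $A$. The only cosmetic slips are your phrase that $n,n'$ are ``initial block sums of the enumeration of $X$'' (they are merely initial segments of the summand sequence of $n+m$, which is all your argument actually uses) and the application of adequateness to the finite sequence $\langle x_{i_t}\mid t\leq\ell\rangle$, which should formally be extended to an infinite subsequence having it as an initial segment, exactly as in the paper and in Lemma~\ref{lem:uniquenessfsnx}.
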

\begin{proof}
    Since $\uadd{u}{n}u_n$ is by definition the Rudin--Keisler image of $\usum{u}{n}u_n$ under the addition mapping, it suffices to exhibit an element of the latter ultrafilter in which this mapping is injective by~\cite[Theorem 8.17]{hindman-strauss}. To do this, we mimic the proof of Theorem~\ref{thm:preserve-goodness}: Let $A\in u$ witness that $u$ is $X$-adequate; then for each $a\in A$ let $Y_a\subseteq X$ be the cofinite subset of elements larger than any that appears in the expression of $a$ as a finite sum from $X$, and let $A_a\in u_a$ witness that $u_a$ is $Y_a$-adequate. We define 
    \begin{equation*}
        B=\left\{(a,b)\big|a\in A\text{ and }b\in A_a\right\},
    \end{equation*}
    a basic element of $\usum{u}{n}u_n$. Let us show that addition restricted to $B$ is injective: suppose $(a,b),(c,d)\in B$ satisfy $a+b=c+d$. The definition of $B$ implies that there are $i_1<\cdots<i_{k_0}<i_{k_0+1}<\cdots<i_{k_1}$ and $j_1<\cdots<j_{l_0}<j_{l_0+1}<\cdots<j_{l_1}$ such that $a=x_{i_1}+\cdots+x_{i_{k_0}}$, $b=x_{i_{k_0 + 1}}+\cdots+x_{i_{k_1}}$, $c=x_{j_1}+\cdots+x_{j_{l_0}}$ and $d=x_{j_{l_0 +1}}+\cdots+x_{j_{l_1}}$. From $a+b=c+d$ we deduce that $k_1=l_1$. Furthermore, since $a,c\in A$ and $A$ is $X$-adequate, by considering any subsequence of $\langle x_n\big|n<\omega\rangle$ containing $\langle x_{i_1},\ldots,x_{i_{k_1}}\rangle$ as an initial segment we may conclude that $a=c$. Repeating the same reasoning with $b,d$ and the sequence $\langle x_{i_{k_0 + 1}},\ldots,x_{i_{k_1}}\rangle$ we may conclude that $b=d$, and we are done.
\end{proof}

Proposition~\ref{prop:goodnessimpliesrkequiv} shows that our choice of definition for a transfinite sum is extremely natural. With this, we can extend the idea of the kinds of ultrafilters that can be generated from a given one by means of addition.

\begin{definition}
     For an ultrafilter $p\in\beta\omega$, we define by transfinite recursion
    \begin{eqnarray*}
    \cF_0^p & = & \{p\}, \\
    \cF_{\alpha+1}^p & = & \left\{\uadd{u}{n}u_n\big|u\in\cF_\alpha^p\text{ and }u_n\in\cF_\alpha^p\text{ for all }n<\omega\right\}, \\
    \cF_\alpha^p & = & \bigcup_{\xi<\alpha}\cF_\xi^p,\ \ \ \ \text{ if }\alpha\text{ is limit}; \\
    \cF^p & = & \bigcup_{\alpha\in\ord}\cF_\alpha^p.
    \end{eqnarray*}
\end{definition}

Note that, since $\omega_1$ has uncountable cofinality, it is easy to prove that $\cF^p=\cF_{\omega_1}^p$. Our previous results have already laid out the tools needed to prove the following.

\begin{proposition}
\label{prop:sumassonqbuenas}
    Let $X$ be a set with uniqueness of sums, and let $p\in\beta\omega$ be an $X$-adequate ultrafilter. Then, every element of $\cF^p$ is $X$-adequate.
\end{proposition}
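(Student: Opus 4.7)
The plan is a straightforward transfinite induction on $\alpha$, proving for each ordinal $\alpha$ the auxiliary statement that every element of $\cF_\alpha^p$ is $X$-adequate; the conclusion for $\cF^p$ then follows from $\cF^p=\bigcup_{\alpha\in\ord}\cF_\alpha^p$. The base case $\alpha=0$ is immediate: $\cF_0^p=\{p\}$, and $p$ is $X$-adequate by hypothesis. The limit case is also immediate, since if $\alpha$ is a limit and $v\in\cF_\alpha^p=\bigcup_{\xi<\alpha}\cF_\xi^p$, then $v\in\cF_\xi^p$ for some $\xi<\alpha$ and the inductive hypothesis at $\xi$ applies.

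The only substantive case is the successor step, and the whole point of the machinery developed so far is that this step is essentially free. Indeed, an arbitrary element of $\cF_{\alpha+1}^p$ has, by definition, the form $\uadd{u}{n}u_n$ with $u\in\cF_\alpha^p$ and $u_n\in\cF_\alpha^p$ for each $n<\omega$. By the inductive hypothesis, $u$ and every $u_n$ are $X$-adequate; applying Theorem~\ref{thm:preserve-goodness} (which was tailor-made for precisely this kind of argument, asserting that indexed sums preserve $X$-adequateness when $X$ has uniqueness of sums) yields that $\uadd{u}{n}u_n$ is itself $X$-adequate, completing the induction.

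I do not expect any real obstacle: the definition of $\cF^p$ by recursion, the hypothesis that $X$ has uniqueness of sums, and Theorem~\ref{thm:preserve-goodness} combine so cleanly that the proof reduces to invoking the three cases of the induction. If anything deserves a moment's care, it is simply noting that the uniqueness-of-sums hypothesis on $X$ is fixed once and for all (it does not need to be re-verified along the recursion), and that the sequences $\langle u_n\mid n<\omega\rangle$ appearing in $\cF_{\alpha+1}^p$ are indexed by $\omega$, so that the inductive hypothesis at $\alpha$ applies uniformly to all the $u_n$ at once.
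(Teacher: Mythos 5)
Your proof is correct and follows exactly the paper's own argument: a transfinite induction on $\alpha$ in which the base and limit cases are trivial and the successor step is exactly Theorem~\ref{thm:preserve-goodness}. No issues.
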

\begin{proof}
     The proof is by induction on $\alpha$ such that our ultrafilter belongs to $\cF_\alpha^p$, the cases $\alpha=0$ and $\alpha$ limit being trivial. The only nontrivial case, the successor step, is handled by Theorem~\ref{thm:preserve-goodness}.
\end{proof}

\begin{corollary}\label{cor:good-implies-noidem}
    Let $X\subseteq\omega$ be a set with uniqueness of sums, and let $p\in\beta\omega$ be an $X$-adequate ultrafilter. Then, every element of $\cF^p$ generates a free subsemigroup.
\end{corollary}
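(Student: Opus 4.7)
The plan is to observe that this corollary follows immediately by chaining together the two main results of this subsection. By hypothesis, $X$ has uniqueness of sums and $p$ is $X$-adequate, so Proposition~\ref{prop:sumassonqbuenas} applies to $p$, giving that every $u\in\cF^p$ is itself $X$-adequate. Once this has been established, the conclusion is immediate from Corollary~\ref{prop:qpuntonoidem} applied to each such $u$: the hypothesis there (an $X$-adequate ultrafilter on a set $X$ with uniqueness of sums) is exactly what Proposition~\ref{prop:sumassonqbuenas} has just supplied.

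There is no real obstacle here; all the heavy lifting has been done in Theorem~\ref{thm:preserve-goodness} (which ensures that adequateness is preserved by the operation $\uadd{u}{n}u_n$, powering the successor step of the induction defining $\cF^p$) and in Lemmas~\ref{lem:uniquenessfsnx} and~\ref{lem:fsnx} (which together furnish, for each $n$, a witness $\fs_n^X(A)\in u^n$ such that $\fs_n^X(A)\cap\fs_m^X(A)=\varnothing$ whenever $n\neq m$, forcing $u^n\neq u^m$). So the write-up should be essentially a two-line invocation: first cite Proposition~\ref{prop:sumassonqbuenas} to transfer $X$-adequateness from $p$ to an arbitrary $u\in\cF^p$, then cite Corollary~\ref{prop:qpuntonoidem} to conclude that $u$ generates a free subsemigroup.
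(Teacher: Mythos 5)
Your proposal is correct and matches the paper's own proof exactly: the paper likewise cites Proposition~\ref{prop:sumassonqbuenas} to transfer $X$-adequateness to every element of $\cF^p$ and then Corollary~\ref{prop:qpuntonoidem} to conclude that each such element generates a free subsemigroup.
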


\begin{proof}
    Immediate from Corollary~\ref{prop:qpuntonoidem} and Proposition~\ref{prop:sumassonqbuenas}.
\end{proof}

This leads to an extremely interesting result about Q-points, arguably the main result of this section. It generalizes the well-known observation that Q-points cannot be idempotent elements of $\beta\omega$.

\begin{theorem}
\label{thm:nohayidem}
    Let $p\in\beta\omega$ be a Q-point. Then, every element of $\cF^p$ generates a free subsemigroup.
\end{theorem}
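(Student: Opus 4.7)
The plan is to reduce to Corollary~\ref{cor:good-implies-noidem} by producing an infinite set $X\subseteq\omega$ with uniqueness of sums such that $p$ is $X$-adequate. The natural candidate is to take $X$ itself to be a member of $p$ that is extremely sparse, extracted by invoking the Q-point property of $p$.

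Concretely, first I would fix a fast-growing function such as $g(k)=2k(k+1)$, and recursively construct an increasing sequence $\langle t_n\big|n<\omega\rangle$ satisfying $t_{n+1}>g(t_{n+1}-1)$; then the corresponding partition of $\omega$ into finite intervals $F_n=[t_n,t_{n+1})$ has the feature that $\min F_{n+1}>g(\max F_n)$. Applying the Q-point property to this partition yields $A\in p$ with $|A\cap F_n|\leq 1$ for every $n$; enumerating $A=\{x_0<x_1<\cdots\}$ and discarding finitely many initial elements if necessary (which preserves membership in $p$, since $p$ is nonprincipal), one arranges $x_{n+1}>g(x_n)\geq 2(n+1)x_n\geq 2\sum_{i\leq n}x_i$ for every $n$ (using $x_n\geq n$). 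In particular $A$ has uniqueness of sums: every finite nonempty $F\subseteq A$ satisfies $\sum_{a\in F}a\in[\max F,2\max F)$, from which $\max F$, and inductively $F$ itself, can be recovered.

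Setting $X=A$, I would then verify that $A$ itself witnesses $X$-adequacy of $p$. The inclusion $A\subseteq\fs(X)$ is immediate. For any subsequence $\langle x_{n_k}\big|k<\omega\rangle$ of the enumeration of $X$, the singleton sum $x_{n_1}$ lies in $A$, so $m=1$ works; and for $m\geq 2$, the sum $x_{n_1}+\cdots+x_{n_m}$ lies strictly between $x_{n_m}$ and $2x_{n_m}$, so it cannot equal any $x_j\in X$ (such a $j>n_m$ would force $x_j\geq x_{n_m+1}>2x_{n_m}$), whence $m=1$ is unique. Finally, for every cofinite $Y\subseteq X$, $Y\in p$ by nonprincipality and $Y\subseteq\fs(Y)$, so $\fs(Y)\in p$. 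This verifies $X$-adequacy of $p$, and Corollary~\ref{cor:good-implies-noidem} then closes the proof.

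The main obstacle is the construction of the sparse set $A$: the Q-point property must be invoked on a partition designed so that the induced gaps between consecutive selected elements dominate not merely the previous element but the entire accumulated sum of prior selected elements. Beyond choosing this partition carefully, the rest is routine verification of the definitions.
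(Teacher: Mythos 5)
Your overall strategy is exactly the paper's (produce an $X$ with uniqueness of sums such that $p$ is $X$-adequate, witnessed by a member of $p$, and quote Corollary~\ref{cor:good-implies-noidem}), and your verification of $X$-adequacy at the end is fine; but the construction of the sparse set has a genuine gap. Since the sets $F_n=[t_n,t_{n+1})$ partition $\omega$ into consecutive intervals, one always has $\min F_{n+1}=\max F_n+1$, so the ``feature'' $\min F_{n+1}>g(\max F_n)$, i.e.\ $t_{n+1}>g(t_{n+1}-1)$ with $g(k)=2k(k+1)$, is unsatisfiable as soon as $t_{n+1}\geq 2$: no interval partition of $\omega$ can have gaps between consecutive blocks. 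Even if one repairs the recursion to something like $t_{n+1}>g(t_n)$ (making the blocks long), the Q-point property only yields $A\in p$ with at most one point per block, and two consecutive elements of $A$ may still be adjacent integers --- one sitting at the top of $F_n$ and the next at the bottom of $F_{n+1}$ --- so the claimed estimate $x_{n+1}>g(x_n)$ fails, and discarding finitely many initial elements cannot repair it. Without that growth estimate you get neither uniqueness of sums nor the uniqueness clause in $X$-adequacy.

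The missing idea is a second thinning step after the Q-point is applied. The paper uses the partition $I_n=[2^n,2^{n+1})$, obtains $Y\in p$ meeting each $I_n$ in at most one point, and then, writing $Y=\{y_n\big|n<\omega\}$ in increasing order, uses that $p$ is an ultrafilter to find a parity $i<2$ with $X=\{y_{2n+i}\big|n<\omega\}\in p$. Taking every other element forces consecutive members of $X$ to be separated by at least one full block, whence $2x_n<x_{n+1}$, hence $x_n>\sum_{k<n}x_k$, which gives uniqueness of sums; from there the argument you gave (singleton sums witness $m=1$, longer sums land strictly between $x_{n_m}$ and $2x_{n_m}$, and $\fs(Y)\in p$ for cofinite $Y\subseteq X$ because $Y\in p$ and $Y\subseteq\fs(Y)$) goes through verbatim. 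So the proposal is salvageable, but only after inserting this extra selection step, which is precisely the point where the Q-point hypothesis has to be combined with ultrafilter-ness rather than used once on a cleverly chosen partition.
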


\begin{proof}
    Consider the partition of $\omega$ into finite sets $I_n$ given by $I_n=[2^n,2^{n+1})$. Since $p$ is a Q-point, there exists a $Y\in p$ such that, for each $n<\omega$, $Y\cap I_n$ contains at most one point. Now if $Y=\{y_n\big|n<\omega\}$ is its increasing enumeration, there is an $i<2$ such that $X=\{y_{2n+i}\big|n<\omega\}\in p$. Letting $x_n=y_{2n+i}$ for all $n$, we get that $X=\{x_n\big|n<\omega\}$ is the increasing enumeration of that set, and it satisfies $2x_n<x_{n+1}$. The latter property allows proving inductively that $x_n>\sum_{k<n}x_k$, which easily yields that $X$ has the uniqueness of sums. Since $X\in p$, for every cofinite $Y\subset X$, $Y\in p$ and then $\text{FS}(Y)\in p$. Also, $X$ witnesses that $p$ is $X$-adequate, because it has uniqueness of sums.  Therefore the result follows from Corollary~\ref{cor:good-implies-noidem}.
\end{proof}
   
\section{Free subsemigroups from a selective}

This section follows a similar line as the previous one, in the sense that we show how certain ultrafilters built from an initial one generate free semigroups; we utilize a wider definition of ``ultrafilters built from'' and, in exchange, have to assume stronger properties regarding the initial ultrafilter.

\subsection{$u$-limits}

Recall that, in a topological space, a point $x$ is the $u$-limit of the sequence of points $x_n$ (where $u$ is an ultrafilter over $\omega$) if for each neighbourhood $V$ of $x$ we have $\{n<\omega\big|x_n\in V\}\in u$; in a compact space $u$-limits always exist (and they are unique if the space is Hausdorff). Hence, in the context of the \v{Cech}--Stone compactification, $u$-limits always exist and are unique, and they can be defined combinatorially as follows.

\begin{definition}\label{def:ulim}
Let $u$ and $u_n$ ($n<\omega$) be ultrafilters on a set $X$. We denote by $\ulim{u}{n}u_n$ the unique ultrafilter $v$ with the property that, for $A\subseteq X$, we have
\begin{equation*}
    A\in v\iff\{n<\omega\big|A\in u_n\}\in u.
\end{equation*}
This ultrafilter will be called {\em the $u$-limit of the sequence of $u_n$}.
\end{definition}

It is a simple exercise to show that $\ulim{u}{n}u_n$ is indeed an ultrafilter. It is also straightforward to prove that $\ulim{u}{n}u_n$ is the collection of all sets of the form
\begin{equation*}
    \bigcup_{n\in A}A_n,
\end{equation*}
where $A\in u$ and $A_n\in u_n$ for all $n<\omega$.

There is a sense in which the notion of a $u$-limit generalizes the Blass--Frol\'{\i}k sum: if $u$ and $u_n$ ($n<\omega$) are ultrafilters over $\omega$, then $\usum{u}{n}u_n=\ulim{u}{n}v_n$, where $v_n$ is the Rudin--Keisler image of $u_n$ under the (injective) mapping $k\longmapsto (n,k)$ (note that $\eqrk{v_n}{u_n}$ for all $n$). Conversely, it is possible to define $u$-limits in terms of Blass--Frol\'{\i}k sums and Rudin--Keisler images: the reader might gladly check that $\ulim{u}{n}u_n$ is the Rudin--Keisler image of the ultrafilter $\usum{u}{n}u_n$ under the mapping $(x,y)\longmapsto y$.

Consequently, a family of ultrafilters that is closed under Rudin--Keisler images will be closed under Blass--Frol\'{\i}k sums if and only if it is closed under ultrafilter limits. The same applies to the ``transfinite sum'' defined in the previous section, since $\uadd{u}{n}u_n=\ulim{u}{n}(n+u_n)$, where $n+u_n$ denotes the Rudin--Keisler image of the ultrafilter $u_n$ under the translation mapping $k\longmapsto n+k$. In particular, sums of ultrafilters (in the sense of the algebra in the \v{C}ech--Stone compactification) are $u$-limits, v.gr. $u+v=\ulim{u}{n}(n+v)$. In particular, a family of ultrafilters closed under Rudin--Keisler images and under Blass--Frol\'{\i}k sums (equivalently replace the latter with ultrafilter limits) will be a subsemigroup of $\beta\omega$.

An important feature of $u$-limits that we will use frequently is that they only depend on the sequence ``up to $u$-many elements''. In other words, if $\langle u_n\big|n<\omega\rangle$ and $\langle v_n\big|n<\omega\rangle$ are two sequences such that $\{n<\omega\big|u_n=v_n\}\in u$, then $\ulim{u}{n}u_n=\ulim{u}{n}v_n$.

Just like regular limits, $u$-limits are also preserved by continuous functions. In particular, whenever we have a function $f:X\longrightarrow Y$ and ultrafilters $u_n$ over $X$ ($n\in\omega$), the equation
\begin{equation}
    f(\ulim{u}{n}u_n)=\ulim{u}{n}f(u_n)
\end{equation}
holds. Using this equation iteratively, one can obtain the following for nested $u$-limits.

\begin{lemma}\label{lem:iteratedulims}
    Given sequences of ultrafilters $\langle u_n\big|n<\omega\rangle$, $\langle v_n\big|n<\omega\rangle$, we have
\begin{equation*}
\ulim{\left(\ulim{p}{n}u_n\right)}{m} v_m=\ulim{p}{n}\left(\ulim{u_n}{m} v_m\right)
\end{equation*}
\end{lemma}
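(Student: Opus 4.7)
The plan is to prove the identity by a direct double application of Definition~\ref{def:ulim}, unpacking both sides and checking that a set $A$ belongs to one ultrafilter if and only if it belongs to the other. Since both sides are ultrafilters (hence determined by their elements), equality reduces to a purely set-theoretic manipulation of the membership condition.

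More precisely, set $q = \ulim{p}{n} u_n$. First I would apply Definition~\ref{def:ulim} to the left-hand side to get
\begin{equation*}
A \in \ulim{q}{m} v_m \iff \{m < \omega : A \in v_m\} \in q.
\end{equation*}
Applying the definition of $q$ to the right-hand membership, this is equivalent to
\begin{equation*}
\bigl\{ n < \omega : \{m < \omega : A \in v_m\} \in u_n \bigr\} \in p.
\end{equation*}
Next I would apply the definition of $u$-limit in the other direction, noting that the inner condition $\{m < \omega : A \in v_m\} \in u_n$ is precisely the statement $A \in \ulim{u_n}{m} v_m$. Substituting, the condition becomes
\begin{equation*}
\bigl\{ n < \omega : A \in \ulim{u_n}{m} v_m \bigr\} \in p,
\end{equation*}
which by one more application of Definition~\ref{def:ulim} is exactly $A \in \ulim{p}{n}\bigl(\ulim{u_n}{m} v_m\bigr)$. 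Chaining these equivalences gives equality of the two ultrafilters.

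There is essentially no obstacle: the whole argument is a symbolic unwinding of the definition, and the only point worth flagging is that each intermediate object $\ulim{u_n}{m} v_m$ is a legitimate ultrafilter (so Definition~\ref{def:ulim} applies uniformly), which is immediate from the standing assumption that the $v_m$ are ultrafilters on a common set. Alternatively, one could present the same proof more conceptually via the remark preceding the lemma, viewing $u$-limits as Rudin--Keisler images of Blass--Frol\'{\i}k sums and invoking the continuity-of-Rudin--Keisler-images identity $f(\ulim{u}{n} u_n) = \ulim{u}{n} f(u_n)$ twice, but the elementary membership computation above seems cleaner and more self-contained.
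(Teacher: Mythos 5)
Your proof is correct and takes essentially the same approach as the paper: a direct unwinding of Definition~\ref{def:ulim}, checking membership of an arbitrary set on both sides. The only cosmetic difference is that the paper proves a single inclusion and then invokes maximality of ultrafilters, whereas you run the chain of equivalences in both directions at once.
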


\begin{proof}
    Suppose $X\in\ulim{\left(\ulim{p}{n}u_n\right)}{m}v_m$. Then,
    \begin{equation*}
        A=\{m<\omega\big|X\in v_m\}\in\ulim{p}{n} u_n,
    \end{equation*}
    which means that $B=\{n<\omega\big|A\in u_n\}\in p$. So, if $n\in B$ then we have
    \begin{equation*}
        \{m<\omega\big|X\in v_m\}=A\in u_n,
    \end{equation*}
    which means $X\in\ulim{u_n}{m}v_m$. The conclusion is that
    \begin{equation*}
        \left\{n<\omega\big|X\in\ulim{u_n}{m}v_m\right\}\supseteq B\in p,
    \end{equation*}
    meaning that $X\in\ulim{p}{n}\left(\ulim{u_n}{m} v_m\right)$. Therefore we have
    \begin{equation*}
    \ulim{\left(\ulim{p}{n}u_n\right)}{m} v_m\subseteq\ulim{p}{n}\left(\ulim{u_n}{m} v_m\right);
    \end{equation*}
    by maximality of ultrafilters, this set inclusion is actually an equality.
\end{proof}

We finish this subsection with the following useful and easy consequence of Lemma~\ref{lem:iteratedulims}.

\begin{corollary}\label{lem:f(u)limit}
    Let $f:\omega\longrightarrow\omega$ be a function, and let $\langle u_n\big|n<\omega\rangle$ be a sequence of ultrafilters. Then,
    \begin{equation*}
        \ulim{f(u)}{n}u_n=\ulim{u}{n}u_{f(n)}
    \end{equation*}
\end{corollary}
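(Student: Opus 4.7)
My plan is to deduce the corollary from Lemma~\ref{lem:iteratedulims} by rewriting the Rudin--Keisler image $f(u)$ itself as a $u$-limit of principal ultrafilters. For each $k<\omega$, let $e_k$ denote the principal ultrafilter concentrated at $k$, that is, $e_k=\{A\subseteq\omega\big|k\in A\}$. The first step is to verify the identity $f(u)=\ulim{u}{m}e_{f(m)}$: for any $X\subseteq\omega$ one has $\{m<\omega\big|X\in e_{f(m)}\}=\{m<\omega\big|f(m)\in X\}=f^{-1}[X]$, which belongs to $u$ precisely when $X\in f(u)$.

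Next I would apply Lemma~\ref{lem:iteratedulims}, instantiating its ``$p$'' with $u$, its intermediate sequence with $\langle e_{f(n)}\big|n<\omega\rangle$, and its inner sequence with the given $\langle u_m\big|m<\omega\rangle$. The lemma then yields
\[
    \ulim{f(u)}{m}u_m \;=\; \ulim{\left(\ulim{u}{n}e_{f(n)}\right)}{m}u_m \;=\; \ulim{u}{n}\left(\ulim{e_{f(n)}}{m}u_m\right).
\]
The final step is the trivial identity $\ulim{e_k}{m}u_m=u_k$ for every $k<\omega$, which holds because $X\in\ulim{e_k}{m}u_m$ iff $\{m<\omega\big|X\in u_m\}\in e_k$ iff $k\in\{m<\omega\big|X\in u_m\}$ iff $X\in u_k$. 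Substituting $k=f(n)$ collapses the inner limit to $u_{f(n)}$, giving the right-hand side of the statement.

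There is no real obstacle: once Lemma~\ref{lem:iteratedulims} is in hand, the corollary is a formal computation. The only conceptual content is the observation that every Rudin--Keisler image of $u$ is itself a $u$-limit of principal ultrafilters, which allows the two constructions (RK images and $u$-limits) to be subsumed into the single associativity principle of iterated limits.
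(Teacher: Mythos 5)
Your proposal is correct and follows essentially the same route as the paper: both express $f(u)$ as the $u$-limit of the principal ultrafilters at $f(m)$, invoke Lemma~\ref{lem:iteratedulims}, and evaluate the inner limits along principal ultrafilters. You merely spell out explicitly the two identities (the description of $f(u)$ as a limit and the evaluation $\ulim{e_k}{m}u_m=u_k$) that the paper leaves to the reader.
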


\begin{proof}
    Within the proof we identify elements of $\omega$ with their corresponding principal ultrafilters. Looking at the definition of the Rudin--Keisler image (and using the simple fact that $u=\ulim{u}{n}n$), the reader should convince herself that $f(u)=\ulim{u}{m}f(m)$. Therefore
    \begin{equation*}
    \ulim{f(u)}{n}u_n=\ulim{\left(\ulim{u}{m}f(m)\right)}{n}u_n=\ulim{u}{m}\left(\ulim{f(m)}{n} u_n\right)=\ulim{u}{m}u_{f(m)},
    \end{equation*}
    by Lemma~\ref{lem:iteratedulims}.
\end{proof}

\subsection{A theorem about $p$-limits when $p$ is selective}

Normally, in a topological space different sequences might have the same limit; the same is true of $u$-limits. In this subsection we will see that, in the particular case of taking limits along a selective ultrafilter, this only happens in the trivial case (i.e. when one of the limits is of the constant sequence with constant value the other limit).

\begin{lemma}\label{lem:combinations}
Let $X$ be a set, let $\mathscr X$ be a finite family of subsets of $X$, let $u$ be an ultrafilter over $X$, and let $A\in u$. If we are given ultrafilters $v_1,\ldots,v_n$ over $X$, all of them distinct from $u$, then we can find an $A'\subseteq A$ such that:
\begin{enumerate}
\item $A'\in u$,
\item $A'\notin v_i$ for all $i$,
\item For every $B\in\mathscr X$, either $B\cap A'=\varnothing$, or $A'\subseteq B$.
\end{enumerate}
\end{lemma}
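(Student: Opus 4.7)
The lemma is essentially a double refinement of $A$, and I do not expect any substantive obstacle; the argument reduces to careful bookkeeping with the maximality of $u$ together with closure of ultrafilters under finite intersections and supersets.

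First I would handle conditions (1) and (2) simultaneously. For each $i\leq n$, since $v_i\neq u$, the two ultrafilters are separated by some set; by passing to the complement if necessary, I may choose $C_i\in u$ with $C_i\notin v_i$, so that $X\setminus C_i\in v_i$. Setting $A_0=A\cap\bigcap_{i\leq n}C_i\in u$ is then straightforward, and the key observation is that every subset $A'\subseteq A_0$ is automatically excluded from each $v_i$: indeed $A'\subseteq C_i$, so superset-closure of $v_i$ would force $C_i\in v_i$ if $A'\in v_i$, contradicting the choice of $C_i$.

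Next I would secure condition (3) by using the maximality of $u$ to ``decide'' each set in the finite family $\mathscr X$. For each $B\in\mathscr X$, set $B^{\ast}=B$ when $B\in u$ and $B^{\ast}=X\setminus B$ otherwise, so that $B^{\ast}\in u$ in either case. Define
\begin{equation*}
A'=A_0\cap\bigcap_{B\in\mathscr X}B^{\ast}.
\end{equation*}
Then $A'\in u$ as a finite intersection of members of $u$, it inherits (2) from $A_0$ by the previous paragraph, and for each $B\in\mathscr X$ we have either $A'\subseteq B$ (when $B\in u$) or $A'\subseteq X\setminus B$, i.e.\ $A'\cap B=\varnothing$ (when $B\notin u$). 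The only point worth flagging is the need to decouple (2) from this second-stage refinement, which is precisely what the superset-closure argument of the first stage affords; beyond that the verification is routine.
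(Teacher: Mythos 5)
Your proof is correct and follows essentially the same route as the paper: both arguments pick, for each $v_i$, a set in $u\setminus v_i$ (using $u\neq v_i$), use maximality of $u$ to decide each $B\in\mathscr X$, and take a finite intersection, with superset-closure of the $v_i$ giving condition (2). The only difference is organizational — you decouple the two refinements into two stages, whereas the paper builds one auxiliary set $A_{B,i}$ per pair $(B,i)$ — and your version is, if anything, slightly cleaner.
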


\begin{proof}
For each $B\in\mathscr X$ and each $i\in\{1,\ldots,n\}$ define a set $A_{B,i}$ as follows:  we first let $C_{B,i}\in\{B,X\setminus B\}$ be such that $C_{B,i}\in v_i$; if $C_{B,i}\notin u$ then we define $A_{B,i}=X\setminus C_{B,i}$ and otherwise (since $u\neq v_i$) we may pick $A_{B,i}\subseteq C_{B,i}$ such that $A_{B,i}\in u\setminus v_i$. It is readily checked that $\displaystyle{A'=A\cap\left(\bigcap_{B\in\mathscr X,i\in\{1,\ldots,n\}}A_{B,i}\right)}$ satisfies the required conditions.
\end{proof}

Since $u$-limits only depend on the relevant sequence up to $u$-many elements, we introduce the following definitions to capture this idea.

\begin{definition}\label{def:modulou}
Let $u$ be an ultrafilter. We will say that the sequence $\langle u_n\big|n<\omega\rangle$ is
\begin{enumerate}
    \item {\bf $u$-constant}, if there exists an ultrafilter $v$ such that $\{n<\omega\big|v=u_n\}\in u$; and
    \item {\bf $u$-injective} if there exists an $A\in u$ such that whenever $n,m\in A$ are distinct, $u_n\neq u_m$.
\end{enumerate}
We will also say that two sequences $\langle u_n\big|n<\omega\rangle$ and $\langle v_n\big|n<\omega\rangle$ are {\bf $u$-equal} if $\{n<\omega\big|u_n=v_n\}\in u$.
\end{definition}

So we always have $\ulim{u}{n}u_n=\ulim{u}{n}v_n$ whenever the sequence of the $u_n$ is $u$-equal to the sequence of the $v_n$.

In the particular case where $p$ is a selective ultrafilter, every sequence will be either $p$-constant or $p$-injective; the $p$-limits of two $p$-constant sequences are equal precisely when both sequences have the same (constant) value modulo $p$; the following theorem establishes the analogous fact for two $p$-injective sequences.

\begin{theorem}\label{thm:uniquenessofsequence}
Let $p$ be a selective ultrafilter, and let $\langle u_n\big|n<\omega\rangle,\langle v_n\big|n<\omega\rangle$ two $p$-injective sequences. Then, we have $\ulim{p}{n}u_n=\ulim{p}{n}v_n$ if and only if $\{n<\omega\big|u_n=v_n\}\in p$.
\end{theorem}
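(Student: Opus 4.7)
The ``if'' direction is immediate from the remark following Definition~\ref{def:modulou}. For the nontrivial ``only if'' direction, I argue by contrapositive: assume $B:=\{n<\omega\big|u_n\ne v_n\}\in p$ and construct a $Z\subseteq\omega$ with $\{n<\omega\big|Z\in u_n\}\in p$ but $\{n<\omega\big|Z\in v_n\}\notin p$, contradicting $\ulim{p}{n}u_n=\ulim{p}{n}v_n$. Using the $p$-injectivity of both sequences, I first shrink $B$ so that $\{u_n\big|n\in B\}$ and $\{v_n\big|n\in B\}$ are each pairwise distinct.

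The central technical step is to produce a pairwise disjoint family $\{E_n\big|n\in A\}$, for some $A\in p\cap\mathcal{P}(B)$, with $E_n\in u_n\setminus v_n$. For each pair $m<n$ in $B$, Lemma~\ref{lem:combinations} provides $A_{n,m}\in u_n\setminus u_m$; set $A_n:=\bigcap_{m<n,\,m\in B}A_{n,m}\in u_n$, a finite intersection satisfying $A_n\notin u_m$ for every $m<n$ in $B$. The Ramsey property of the selective ultrafilter $p$, applied to the $2$-colouring $c(\{n,m\})=1\iff A_n\in u_m$ on $[B]^2$ (with $n<m$), yields a homogeneous $A^*\in p$. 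In either case of the colouring, a finite Boolean manipulation --- subtracting $A_{a_{k+1}}$ from $A_{a_k}$ (in the case $c\equiv 1$, with $A^*=\{a_0<a_1<\cdots\}$) or subtracting finite unions of earlier $A_m$'s (in the case $c\equiv 0$) --- produces sets $A''_n\in u_n$ ($n\in A^*$), each belonging to no other $u_m$ with $m\in A^*$, and pairwise disjoint as subsets of $\omega$. Intersecting $A''_n$ with any $X_n\in u_n\setminus v_n$ produces the desired $E_n$.

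Now set $Z:=\bigcup_{n\in A}E_n$; since $E_m\subseteq Z\in u_m$ for every $m\in A$, we have $Z\in\ulim{p}{n}u_n$. Pairwise disjointness of $\{E_n\}$ forces, for each $m\in A$, at most one $n\in A$ to satisfy $E_n\in v_m$: denote that $n$ by $k(m)$ when it exists, and $k(m):=*$ otherwise. Since $E_m\notin v_m$ by construction, $k(m)\ne m$ always. Applying the selectivity of $p$ to $k$ yields $A''\in p\cap\mathcal{P}(A)$ on which $k$ is either constant or one-to-one. The constant-$*$ case gives $Z\notin v_m$ for every $m\in A''$, hence $Z\notin\ulim{p}{n}v_n$, a contradiction. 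The constant-$n_0\in A$ case gives $E_{n_0}\in v_m$ for every $m\in A''$, so $E_{n_0}\in\ulim{p}{n}v_n$; but by pairwise disjointness $E_{n_0}\in u_m$ only for $m=n_0$, so $E_{n_0}\notin\ulim{p}{n}u_n$, again a contradiction.

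In the injective sub-case, a further application of the Ramsey property of $p$ to the colouring $d(\{n,m\})=1\iff k(n)=m\text{ or }k(m)=n$ produces a homogeneous $A'''\in p\cap\mathcal{P}(A'')$; the alternative $d\equiv 1$ is incompatible with $k$ being a one-to-one function (fix $n$; at most one $m$ satisfies $k(n)=m$, and at most one satisfies $k(m)=n$), so $d\equiv 0$ on $[A''']^2$, which is equivalent to $k(A''')\cap A'''=\varnothing$. The set $Z':=\bigcup_{m\in A'''}E_{k(m)}$ then lies in every $v_m$ with $m\in A'''$, giving $Z'\in\ulim{p}{n}v_n$; on the other hand, pairwise disjointness implies $Z'\in u_{m'}$ only for $m'\in k(A''')$, a set disjoint from $A'''\in p$ and hence not in $p$, so $Z'\notin\ulim{p}{n}u_n$. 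This completes the contradiction. The main technical hurdles are the disjoint-family construction (combining Lemma~\ref{lem:combinations} with the Ramsey property of $p$) and the auxiliary Ramsey step needed in the one-to-one sub-case to eliminate the overlap $k(A''')\cap A'''$.
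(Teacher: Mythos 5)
Your overall strategy (contrapositive, disjointify witnesses, then analyze where they land) is close in spirit to the paper's, but as written it has two genuine gaps. The first is in your ``central technical step''. In the case $c\equiv 1$ of your colouring (where $A_n\in u_m$ for all $n<m$ in $A^*$), subtracting only the immediate successor, $A''_{a_k}=A_{a_k}\setminus A_{a_{k+1}}$, does not make the family pairwise disjoint: nothing in your construction controls $A_{a_j}\cap A_{a_k}$ as subsets of $\omega$ when $k>j+1$, since your colouring records ultrafilter membership, not set-theoretic structure. Pairwise disjointness is exactly what you need afterwards (to make $k(m)$ well defined and to run the separation), and the way to get it is clause (3) of Lemma~\ref{lem:combinations}: arrange at each stage that the new set is either contained in or disjoint from each earlier one, and then colour pairs by ``disjoint vs.\ contained''; in the contained-homogeneous case the sets are nested and subtracting the successor does work. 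This part of your argument is repairable by using the lemma the way the paper does.

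The second gap is structural and concerns your constant-$*$ case: from ``no $E_n$ with $n\in A$ belongs to $v_m$'' you conclude $Z\notin v_m$, but $Z=\bigcup_{n\in A}E_n$ is an infinite union, and $v_m$ can perfectly well contain $Z$ while containing no single piece (for instance, $v_m$ could concentrate on a transversal meeting each $E_n$ in exactly one point). Since your construction only controls the $u$-side (the sets $E_n\in u_n\setminus v_n$), you have no set in $v_m$ disjoint from $Z$ with which to exclude this, and the case where no $E_n$ lands in any $v_m$ is the typical one, so the contradiction is not reached. (Your constant-$n_0$ and one-to-one cases are fine once pairwise disjointness is secured, since there you exhibit a set in one limit that is disjoint from a set in the other.) The paper closes precisely this hole by a two-sided construction: it builds $A_n\in u_n\setminus v_n$ and $B_n\in v_n\setminus u_n$ simultaneously, with mutual ``disjoint or contained'' conditions via Lemma~\ref{lem:combinations}, and a final application of the Ramsey property forces $A_n\cap B_m=\varnothing$ for all pairs from a single set in $p$, so that $\bigcup A_n\in\ulim{p}{n}u_n$ and $\bigcup B_n\in\ulim{p}{n}v_n$ are disjoint, separating the two limits. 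To complete your proof you would need to add such a $v$-side family with the cross conditions, which in effect brings you back to the paper's argument.
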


\begin{proof}
    In the nontrivial direction, we will reason by contrapositive and so start by assuming, without loss of generality, that $u_n\neq v_n$ for all $n$; of course the hypothesis that both sequences are $p$-injective means the $u_n$ are pairwise distinct, and similarly the $v_n$. Furthermore, since $p$ is a selective ultrafilter we may assume (by an application of the Ramsey property) that either for all $n<m$ we have $u_n=v_m$, or for all $n<m$ we have $u_n\neq v_m$; the former readily leads to a contradiction so the conclusion is that we may assume without loss of generality that whenever $n\neq m$ we have $u_n\neq v_m$. We will proceed to build an element $X\in p$ and two  sequences of sets $\langle A_n\big|n\in X\rangle$, $\langle B_n\big|n\in X\rangle$ such that:
\begin{enumerate}
 \item If $n,m\in X$ are distinct, then $A_n\cap A_m=\varnothing=B_n\cap B_m$,
 \item for $n,m\in X$ distinct, we have $A_n\in u_n\setminus v_n$ and $A_n\notin u_m\cup v_m$; and similarly $B_n\in v_n\setminus u_n$ and $B_n\notin u_m\cup v_m$,
 \item for each $n\in X$, $A_n\cap B_n=\varnothing$,
 \item for $n,m\in X$ such that $n<m$, either $A_n\cap B_m=\varnothing$ or $B_m\subseteq A_n$ and similarly either $A_m\cap B_n=\varnothing$ or $A_m\subseteq B_n$.
\end{enumerate}
Begin by picking, for each $n<\omega$, two disjoint sets $A_n''',B_n''$ such that $A_n'''\in u_n$ and $B_n''\in v_n$. Now, by recursion on $n<\omega$ get $A_n''\subseteq A_n'''$ such that $A_n''\in u_n$ and, for each $i<n$, $A_n''\notin u_i$ and either $A_n''\cap A_i''=\varnothing$, or $A_n''\subseteq A_i''$; this is simply an application of Lemma~\ref{lem:combinations} with $\mathscr X=\{A_i''\big|i<n\}$. This way, the sequence of sets $A_n''$ satisfies that, for $n<m<\omega$, $A_m''\in u_m\setminus u_n$, and either $A_m''\subseteq A_n''$ or $A_n''\cap A_m''=\varnothing$. Now define a colouring $c_1:[\omega]^2\longrightarrow 2$ by letting $c_1(\{n,m\})=1$ if and only if $A_n''\cap A_m''=\varnothing$; since $p$ is a selective ultrafilter there exists a $c_1$-homogeneous set $X'\in p$. If $X'$ is $c_1$-homogeneous in colour $1$ then let $A_n'=A_n''$ for each $n\in X'$; otherwise, for each $n\in X'$ let $m=\min(X'\setminus(n+1))$ and define $A_n'=A_n''\setminus A_m''$. Note that the sequence of sets $A_n'$ ($n\in X'$) thus created is pairwise disjoint and satisfies $A_n'\in u_n$ for every $n\in X'$. We now proceed to recursively choose, for $n\in X'$, sets $B_n'\subseteq B_n''$ such that $B_n'\in v_n$ and, for each $i\in X'\cap n$, $B_n'\notin v_i$, $B_n'\notin u_i$ and either $B_n'\cap B_i'=\varnothing$ or $B_n'\subseteq B_i'$ and also either $B_n'\cap A_i'=\varnothing$ or $B_n'\subseteq A_i'$; this is again an application of Lemma~\ref{lem:combinations} with $\mathscr X=\{A_i'\big|i\in X'\cap n\}\cup\{B_i'\big|i\in X'\cap n\}$. Define the colouring $c_2:[X']^2\longrightarrow 2$ by letting $c_2(\{n,m\})=1$ if and only if $B_n'\cap B_m'=\varnothing$; $p$ is selective so there is a $c_2$-homogeneous set $X\subseteq X'$ with $X\in p$. If $X$ is $c_1$-homogeneous in colour $1$ we let $B_n=B_n'$ for each $n\in X$; otherwise we define $B_n=B_n'\setminus B_m'$ where $m=\min(X\setminus(n+1))$. At this point, the sequences of sets $A_n',B_n$ ($n\in X$) are (each of them separately) pairwise disjoint and have the property that $A_n'\in u_n$, $B_n\in v_n$, $A_n'\cap B_n=\varnothing$, and if $n<m$ ($n,m\in X$) then either $B_m\subseteq A_n'$ or $B_m\cap A_n'=\varnothing$.

Finally, use again Lemma~\ref{lem:combinations} recursively on $X$ to obtain (for each $n\in X$) sets $A_n\subseteq A_n'$ with $A_n\in u_n$ and such that for $i<n$ either $A_n\subseteq B_i$ or $A_n\cap B_i=\varnothing$, and $A_n\not\in v_i$. The two sequences $\langle A_n\big|n\in X\rangle$, $\langle B_n\big|n\in X\rangle$ thus obtained satisfy the four requirements described above.

We now proceed to define yet another colouring $c:[X]^2\longrightarrow 2\times 2$ as follows: given $n<m$, for $n,m\in X$, we let $c(\{n,m\})=(i,j)$ where $i=1$ if and only if $A_n\cap B_m=\varnothing$ and $j=1$ if and only if $B_n\cap A_m=\varnothing$. Since $p$ is selective, there is a $c$-homogeneous set $Z\subseteq X$ with $Z\in p$; we now argue that $Z$ must be $c$-homogeneous in colour $(1,1)$. To see this, let $(i,j)$ the colour that $Z$ is homogeneous in, and take three distinct $n,m,k\in Z$ with $n<m<k$. If $i=0$ then we should have $B_k\subseteq A_n$ and also $B_k\subseteq A_m$, contradicting that $A_n\cap A_m=\varnothing$; similarly if $j=0$ then it should be the case that $A_k\subseteq B_n$ and $A_k\subseteq B_m$ which contradicts $B_n\cap B_m=\varnothing$. Hence $(i,j)=(1,1)$. So whenever $n,m\in Z$ it must be the case that $A_n\cap B_m=\varnothing=A_m\cap B_n$. In particular, the set $\bigcup_{n\in Z}A_n$, which belongs to $\ulim{p}{n}u_n$, is disjoint from the set  $\bigcup_{n\in Z}B_n$, which belongs to $\ulim{p}{n}v_n$; of course this implies $\ulim{p}{n}u_n\neq\ulim{p}{n}v_n$, and we are done.
\end{proof}

\subsection{A cancellative subsemigroup built from a selective}

We will now formalize the notion of starting from $p$ and iteratively (even transfinitely) performing Rudin--Keisler images and Blass--Frol\'{\i}k sums (equivalently, taking ultrafilter limits). The definition below is not as comprehensive at first sight, but we will eventually show (cf. Corollary~\ref{cor:characterizationofg}) that it formalizes this notion.

\begin{definition}
Given an ultrafilter $p\in\beta\omega$, we define families of ultrafilters on $\omega$, $\mathscr G_\alpha^p$, as follows: begin by letting $\mathscr G_1^p$ be the family of all principal ultrafilters on $\omega$, and recursively let
\begin{eqnarray*}
\mathscr G_{\alpha+1}^p & = & \left\{\ulim{p}{n}u_n\big|(\forall n<\omega)(u_n\in\mathscr G_\alpha^p)\right\}\ \\
\mathscr G_{\alpha}^p & = & \bigcup_{\xi<\alpha}\mathscr G_\xi^p\ \ \ \ \ \text{ if }\alpha\text{ is a limit ordinal.}
\end{eqnarray*}
Finally, we let $\mathscr G^p$ be the union of all of the $\mathscr G_\alpha^p$.
\end{definition}

So, for example, $\mathscr G_2^p$ will contain all $p$-limits of sequences of principal ultrafilters---that is to say, by identifying a sequence of principal ultrafilters with a function on $\omega$, $\mathscr G_2^p$ will contain all ultrafilters Rudin--Keisler below $p$. In the particular case where $p$ is a selective ultrafilter, $\mathscr G_2^p$ will contain only the ultrafilters that are either principal, or Rudin--Keisler equivalent to $p$. In any case, after that $\mathscr G_3^p$ contains $p$-limits of sequences of elements of $\mathscr G_2^p$, and so on. Similar to the case of the $\cF_\alpha^p$ from the previous section, it is easy to show that $\mathscr G_{\omega_1+1}^p=\mathscr G_{\omega_1}^p$ and, consequently, $\mathscr G^p=\mathscr G_{\omega_1}^p$. The following lemma records two basic properties of the sequence of $\mathscr G_\alpha^p$.

\begin{lemma}\label{lem:galphaproperties}
Let $p\in\beta\omega$ be an ultrafilter, and let $\alpha\leq\beta$ be ordinal numbers. Then,
    \begin{enumerate}
        \item $\mathscr G_\alpha^p\subseteq\mathscr G_\beta^p$;
        \item $\mathscr G_\alpha^p$ is closed under Rudin--Keisler images, that is, if $u\in\mathscr G_\alpha^p$ and $f:\omega\longrightarrow\omega$, then $f(u)\in\mathscr G_\alpha^p$.
    \end{enumerate}
\end{lemma}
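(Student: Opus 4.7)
The proof proposal is to establish both items by straightforward transfinite induction, with the key observations being (a) that $u$-limits of constant sequences return the original ultrafilter, and (b) that Rudin--Keisler images commute with $u$-limits via the identity displayed in equation~(1) of the excerpt.

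For part (1), I would fix $\alpha$ and induct on $\beta \geq \alpha$. The base case $\beta = \alpha$ is trivial, and the limit case is immediate from the definition $\mathscr G_\beta^p = \bigcup_{\xi<\beta}\mathscr G_\xi^p$. The only real content is the successor case $\beta \to \beta+1$: given $u \in \mathscr G_\alpha^p \subseteq \mathscr G_\beta^p$ (by the induction hypothesis), consider the constant sequence $u_n = u$ for all $n < \omega$. For any $A \subseteq \omega$, we have $A \in u$ iff $\{n<\omega\big|A \in u_n\} = \omega \in p$, so $\ulim{p}{n}u_n = u$. Since each $u_n \in \mathscr G_\beta^p$, this witnesses $u \in \mathscr G_{\beta+1}^p$.

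For part (2), I would induct on $\alpha$. For $\alpha = 1$, note that if $u$ is principal at some $k \in \omega$, then for any $f:\omega \longrightarrow \omega$ and any $B \subseteq \omega$ we have $B \in f(u)$ iff $f^{-1}[B] \in u$ iff $k \in f^{-1}[B]$ iff $f(k) \in B$, so $f(u)$ is principal at $f(k)$ and hence lies in $\mathscr G_1^p$. The limit case is immediate: if $u \in \mathscr G_\alpha^p$ for limit $\alpha$, pick $\xi < \alpha$ with $u \in \mathscr G_\xi^p$, apply the induction hypothesis to get $f(u) \in \mathscr G_\xi^p$, and conclude $f(u) \in \mathscr G_\alpha^p$ by definition. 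For the successor case, let $u \in \mathscr G_{\alpha+1}^p$, so $u = \ulim{p}{n}u_n$ for some sequence of $u_n \in \mathscr G_\alpha^p$. Using the continuity identity
\begin{equation*}
f(u) = f\left(\ulim{p}{n}u_n\right) = \ulim{p}{n}f(u_n),
\end{equation*}
and applying the induction hypothesis to each $u_n$ to get $f(u_n) \in \mathscr G_\alpha^p$, we conclude $f(u) \in \mathscr G_{\alpha+1}^p$.

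Neither step presents a substantive obstacle; the proof is essentially bookkeeping that relies on the two structural facts noted above. The only point requiring any care is making sure the formula $f(\ulim{u}{n}u_n) = \ulim{u}{n}f(u_n)$ is legitimately invoked as it was asserted (without proof) in the preceding subsection, and that the constant-sequence argument really does land the ultrafilter back on itself rather than on some other limit.
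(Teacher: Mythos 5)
Your proof is correct and follows essentially the same route as the paper's: part (1) via the constant-sequence observation that $\ulim{p}{n}u=u$ at the successor step, and part (2) by induction on $\alpha$ using that Rudin--Keisler images of principal ultrafilters are principal and the identity $f(\ulim{p}{n}u_n)=\ulim{p}{n}f(u_n)$. The extra verifications you include (the constant-limit computation and the principal case) are details the paper simply declares trivial, so there is nothing to change.
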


\begin{proof}\hfill
    \begin{enumerate}
        \item The proof goes by induction on $\beta$, and the statement is only nontrivial if $\alpha<\beta$ and $\beta=\gamma+1$. By induction hypothesis we will have $\mathscr G_\alpha^p\subseteq\mathscr G_\gamma^p$, so we really only need to prove that $\mathscr G_\gamma^p\subseteq\mathscr G_{\gamma+1}^p$ for any ordinal $\gamma>0$. Now, given $u\in\mathscr G_\gamma^p$, define the constant sequence $v_n=u$ for all $n<\omega$. Then $u=\ulim{p}{n}u=\ulim{p}{n}v_n$, allowing us to conclude that $u\in\mathscr G_{\gamma+1}^p$.
        \item By induction on $\alpha$, with the case $\alpha=1$ being trivial (since Rudin--Keisler images of principal ultrafilters are again principal). The limit case is also straightforward, so we assume $\alpha=\gamma+1$ for a nonzero ordinal $\gamma$. If $u\in\mathscr G_{\gamma+1}^p$ then $u=\ulim{p}{n}u_n$ for some sequence of $u_n\in\mathscr G_\gamma^p$. Then, given an $f:\omega\longrightarrow\omega$, we have that
        \begin{equation*}
            f(u)=f(\ulim{p}{n}u_n)=\ulim{p}{n}f(u_n),
        \end{equation*}
        and by induction hypothesis each ultrafilter $f(u_n)\in\mathscr G_\gamma^p$. Therefore $f(u)\in\mathscr G_{\gamma+1}^p$.
    \end{enumerate}
\end{proof}

There is a couple of things we can conclude from Lemma~\ref{lem:galphaproperties}. The second point of the lemma implies that the family $\mathscr G^p$ is closed under Rudin--Keisler images; it is evident from the definition that it is also closed under taking ultrafilter limits along $p$. Also, the first point of Lemma~\ref{lem:galphaproperties} tells us that the stratification of $\mathscr G^p$ into $\mathscr G_\alpha^p$ provides a cumulative hierarchy for its elements, and so it makes sense to define the following rank-like function.

\begin{definition}
Given an ultrafilter $p\in\beta\omega$, for every $u\in\mathscr G^p$ we define
\begin{equation*}
    \rho_p(u)=\min\{\alpha\in\ord\big|u\in\mathscr G_{\alpha+1}^p\}.
\end{equation*}
\end{definition}

Alternatively, $\rho_p(u)$ is the unique ordinal number $\alpha$ such that $u\in\mathscr G_{\alpha+1}^p\setminus\mathscr G_\alpha^p$ (taking $\mathscr G_0^p=\varnothing$). Intuitively, the function $\rho_p$ (which we will call a rank function) counts the (possibly transfinite) number of times we need to take $p$-limits, starting from the principal ultrafilters, in order to obtain a certain element $u\in\mathscr G^p$. So the principal ultrafilters (and only them) have rank 0 (they are ``already there'' and we do not need to take $p$-limits to create them); all of the nonprincipal ultrafilters that are Rudin--Keisler below $p$ (including $p$ itself) have rank $1$, and so on. Note that, by the second part of Lemma~\ref{lem:galphaproperties}, $\eqrk{u}{v}$ implies $\rho_p(u)=\rho_p(v)$ for $u,v\in\mathscr G^p$.

Consider ultrafilters $p,u$ with $u\in\mathscr G^p$ nonprincipal, and let $\alpha=\rho_p(u)$. Then we have $u\in\mathscr G_{\alpha+1}^p$, so there exists a sequence of ultrafilters $u_n\in\mathscr G_\alpha^p$ such that $u=\ulim{p}{n}u_n$. This motivates the following definition.

\begin{definition}\label{def:gen-seq}
Given an ultrafilter $p\in\beta\omega$, and a $u\in\mathscr G^p$, we say that the sequence $\langle u_n\big|n<\omega\rangle$ of elements of $\mathscr G^p$ is a {\bf $p$-generating sequence} for $u$ if $u=\ulim{p}{n}u_n$ and $(\forall n<\omega)(\rho_p(u_n)<\rho_p(u))$.
\end{definition}

Note that every nonprincipal element of $\mathscr G^p$ necessarily admits a $p$-generating sequence, although this sequence need not be unique. In any case, it is not hard to check that $\rho_p(u)=\sup\{\rho_p(u_n)+1\big|n<\omega\}$ whenever $\langle u_n\big|n<\omega\rangle$ is a $p$-generating sequence for $u\in\mathscr G^p$. As a first application of Definition~\ref{def:gen-seq}, the following theorem strengthens the fact that $\mathscr G^p$ is closed under taking ultrafilter limits along $p$.

\begin{theorem}\label{thm:closedunderlimits}
    Let $p\in\beta\omega$ be an ultrafilter. Then, the family $\mathscr G^p$ is closed under ultrafilter limits. In other words, if $u\in\mathscr G^p$ and $u_n\in\mathscr G^p$ for all $n<\omega$, then $\ulim{u}{n}u_n\in\mathscr G^p$.
\end{theorem}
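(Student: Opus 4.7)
The plan is to proceed by transfinite induction on $\rho_p(u)$. The key identity driving the argument is Lemma~\ref{lem:iteratedulims}, which lets us ``unfold'' a $u$-limit when $u$ is itself a $p$-limit of simpler ultrafilters, reducing the rank.

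First I would dispose of the base case $\rho_p(u)=0$: then $u$ is principal, say concentrated at some $k<\omega$, and a direct check from Definition~\ref{def:ulim} gives $\ulim{u}{n}u_n=u_k$, which belongs to $\mathscr{G}^p$ by hypothesis.

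For the inductive step, assume the result for every element of $\mathscr{G}^p$ of rank $<\rho_p(u)$. Pick a $p$-generating sequence $\langle v_m\mid m<\omega\rangle$ for $u$, so that $u=\ulim{p}{m}v_m$ with each $v_m\in\mathscr{G}^p$ and $\rho_p(v_m)<\rho_p(u)$. By Lemma~\ref{lem:iteratedulims},
\begin{equation*}
\ulim{u}{n}u_n=\ulim{\left(\ulim{p}{m}v_m\right)}{n}u_n=\ulim{p}{m}\left(\ulim{v_m}{n}u_n\right).
\end{equation*}
The induction hypothesis applied to each $v_m$ yields that $w_m:=\ulim{v_m}{n}u_n\in\mathscr{G}^p$ for every $m<\omega$. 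Thus it remains to verify that any $p$-limit of a sequence of elements of $\mathscr{G}^p$ lies in $\mathscr{G}^p$; once this is done, we conclude $\ulim{p}{m}w_m\in\mathscr{G}^p$ and the induction closes.

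The only mildly subtle point, and the one I would want to state as a small preliminary observation, is this closure of $\mathscr{G}^p$ under $p$-limits (the definition only guarantees closure of each successor stage $\mathscr{G}_{\alpha+1}^p$ under $p$-limits of sequences from $\mathscr{G}_\alpha^p$, not from the whole union). To handle it, for any sequence $\langle w_m\mid m<\omega\rangle$ of elements of $\mathscr{G}^p$, choose $\alpha_m<\omega_1$ with $w_m\in\mathscr{G}_{\alpha_m}^p$ and set $\alpha=\sup_m\alpha_m<\omega_1$. By part (1) of Lemma~\ref{lem:galphaproperties}, every $w_m\in\mathscr{G}_\alpha^p$, so by definition $\ulim{p}{m}w_m\in\mathscr{G}_{\alpha+1}^p\subseteq\mathscr{G}^p$. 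This regularity-of-$\omega_1$ argument is really the only content beyond the purely formal unfolding; the main obstacle, if any, is just bookkeeping to ensure the ranks strictly decrease so that the induction is well-founded, which is guaranteed by the defining property of a $p$-generating sequence.
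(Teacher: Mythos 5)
Your proof is correct and takes essentially the same route as the paper: induction on $\rho_p(u)$, handling the principal base case directly, and unfolding $\ulim{u}{n}u_n$ via Lemma~\ref{lem:iteratedulims} through a $p$-generating sequence so that the induction hypothesis applies to each term. Your extra preliminary observation (closure of $\mathscr G^p$ under $p$-limits via the cofinality-of-$\omega_1$ argument) is a point the paper treats as evident from $\mathscr G^p=\mathscr G_{\omega_1}^p$, so spelling it out is fine but not a divergence.
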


\begin{proof}
    If $u\in\mathscr G^p$, then we will proceed to prove the statement ``for every sequence of $u_n\in\mathscr G^p$, $\ulim{u}{n}u_n\in\mathscr G^p$'' by induction on $\rho_p(u)$. If $\rho_p(u)=0$ it means $u$ is a principal ultrafilter, say centred at $n_0$, and so $\ulim{u}{n}u_n=u_{n_0}$, hence the statement holds. Now if $\rho_p(u)\geq 1$, we let $\langle v_m\big|m<\omega\rangle$ be a generating sequence for $u$ and notice that 
    \begin{equation*}
    \ulim{u}{n}u_n=\ulim{\left(\ulim{p}{m}v_m\right)}{n} u_n=\ulim{p}{m}\left(\ulim{v_m}{n} u_n\right).
    \end{equation*}
    Since each $\rho_p(v_m)<\rho_p(u)$, we may apply the inductive hypothesis to conclude that, for each $n<\omega$, $w_m=\ulim{v_m}{n} u_n\in\mathscr G^p$, and consequently $\ulim{u}{n}u_n=\ulim{p}{n}w_n\in\mathscr G^p$.
\end{proof}

\begin{corollary}\label{cor:characterizationofg}
    For an ultrafilter $p\in\beta\omega$, $\mathscr G^p$ is the smallest family of ultrafilters on $\omega$ that
    \begin{enumerate}
        \item contains all principal ultrafilters,
        \item contains $p$,
        \item is closed under taking Rudin--Keisler images, and
        \item is closed under taking Blass--Frol\'{\i}k sums.
    \end{enumerate}
\end{corollary}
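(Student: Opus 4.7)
The plan is to prove the corollary in two standard stages. First, verify that $\mathscr G^p$ itself satisfies all four listed properties; second, show that any family $\mathscr H$ satisfying these properties contains every $\mathscr G_\alpha^p$ by transfinite induction on $\alpha$.

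For the first stage, property (1) is immediate from the base case of the recursive definition, since $\mathscr G_1^p$ was set to be the family of all principal ultrafilters on $\omega$. For property (2), observe that $p = \ulim{p}{n} n$ (identifying each $n<\omega$ with its principal ultrafilter), so $p$ is a $p$-limit of principal ultrafilters and hence lies in $\mathscr G_2^p \subseteq \mathscr G^p$. Property (3), closure under Rudin--Keisler images, is exactly Lemma~\ref{lem:galphaproperties}(2) applied at every level. For property (4), recall from the discussion in Section 3.1 that $\ulim{u}{n}u_n$ is the Rudin--Keisler image of the Blass--Frol\'{\i}k sum $\usum{u}{n}u_n$ under $(x,y)\longmapsto y$, and conversely that $\usum{u}{n}u_n = \ulim{u}{n}v_n$ where $v_n$ is the Rudin--Keisler image of $u_n$ under $k\longmapsto (n,k)$. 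Combined with Theorem~\ref{thm:closedunderlimits} (which gives closure of $\mathscr G^p$ under arbitrary ultrafilter limits) and property (3), this yields closure of $\mathscr G^p$ under Blass--Frol\'{\i}k sums.

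For the second stage, let $\mathscr H$ be any family of ultrafilters on $\omega$ satisfying (1)--(4). I would first observe that the same equivalence noted above shows $\mathscr H$ is automatically closed under ultrafilter limits: given $u\in\mathscr H$ and a sequence $\langle u_n\big|n<\omega\rangle$ in $\mathscr H$, one forms the Blass--Frol\'{\i}k sum $\usum{u}{n}u_n \in\mathscr H$ by (4), and then takes its Rudin--Keisler image under the projection to obtain $\ulim{u}{n}u_n\in\mathscr H$ by (3). Now prove $\mathscr G_\alpha^p \subseteq \mathscr H$ by induction on $\alpha$: the base case $\alpha=1$ is (1); the limit case is trivial since $\mathscr G_\alpha^p = \bigcup_{\xi<\alpha}\mathscr G_\xi^p$; and for the successor step, if $u\in\mathscr G_{\alpha+1}^p$ then $u=\ulim{p}{n}u_n$ for some $u_n\in\mathscr G_\alpha^p\subseteq\mathscr H$, and since $p\in\mathscr H$ by (2), closure of $\mathscr H$ under ultrafilter limits gives $u\in\mathscr H$.

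There is no real obstacle here: all of the nontrivial technical content has already been packaged into Lemma~\ref{lem:galphaproperties} and Theorem~\ref{thm:closedunderlimits}, and the remaining work consists of invoking the translation between Blass--Frol\'{\i}k sums, Rudin--Keisler images, and ultrafilter limits laid out at the beginning of Section 3.1. The only minor subtlety is being careful that the Blass--Frol\'{\i}k sum produces an ultrafilter on $\omega\times\omega$, so strictly speaking (4) should be understood modulo a canonical bijection $\omega\times\omega\to\omega$, which is harmless since (3) absorbs any such reindexing.
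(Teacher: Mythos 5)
Your proposal is correct and follows essentially the same route as the paper: replace closure under Blass--Frol\'{\i}k sums by closure under ultrafilter limits (via the Rudin--Keisler translation from Section 3.1), verify the four properties for $\mathscr G^p$ using Lemma~\ref{lem:galphaproperties}(2) and Theorem~\ref{thm:closedunderlimits}, and obtain minimality by transfinite induction on the levels $\mathscr G_\alpha^p$. The only difference is that you spell out the induction and the $\omega\times\omega$-reindexing explicitly, whereas the paper leaves these as clear.
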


\begin{proof}
    Note that requirement (4) can be replaced, equivalently, with being closed under taking ultrafilter limits. From this perspective, it is clear that every element of $\mathscr G^p$ must belong to any family satisfying all four requirements, so it suffices to show that $\mathscr G^p$ itself satisfies them all. The first two are by definition (since all principal ultrafilters belong to $\mathscr G_1^p$ and $p$ belongs to $\mathscr G_2^p$), the third is point (2) of Lemma~\ref{lem:galphaproperties}, and the fourth, in its incarnation as being closed under taking ultrafilter limits, is Theorem~\ref{thm:closedunderlimits}.
\end{proof}

We will start deriving a few stronger facts under the hypothesis that the ultrafilter $p$ used to build our family $\mathscr G^p$ is selective. Notice, to begin with, that if $p$ is a selective ultrafilter then the generating sequence of any nonprincipal element of $\mathscr G^p$ is essentially unique, in the sense that any two generating sequences for that element will be $p$-equal by Theorem~\ref{thm:uniquenessofsequence}. So in this context, we will talk about ``the'' generating sequence of $u\in\mathscr G^p$. Note that this generating sequence must be $p$-injective: otherwise the sequence would be $p$-constant, but the $p$-constant sequence with constant value $v$ has $v$ itself as its $p$-limit, making it impossible to satisfy the requirement $\rho_p(v)<\rho_p(v)$ included in the definition of a $p$-generating sequence.

\begin{theorem}\label{thm:ifpselective}
    Let $p$ be a selective ultrafilter, let $u\in\mathscr G^p$ be nonprincipal, and let $\langle u_n\big|n<\omega\rangle$ be any sequence of ultrafilters satisfying $u=\ulim{p}{n}u_n$.  Then,
    \begin{enumerate}
        \item the sequence $\langle u_n\big|n<\omega\rangle$ is either $p$-constant, or $p$-equal to the $p$-generating sequence of $u$;
        \item there exists an $X\in p$ such that $\rho_p(u)\geq\sup\{\rho_p(u_n)\big|n\in X\}$ and, furthermore, if the sequence $\langle u_n\big|n<\omega\rangle$ is not $p$-constant, then in fact $\rho_p(u)=\sup\{\rho_p(u_n)+1\big|n\in X\}$.
    \end{enumerate}
\end{theorem}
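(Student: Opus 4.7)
The plan is to prove (1) first, using selectivity to force a dichotomy ($p$-constant versus $p$-injective), and then applying Theorem~\ref{thm:uniquenessofsequence} in the injective case; (2) will then follow by a case analysis using that dichotomy. First I would apply the Ramsey property of $p$ to the $2$-colouring $c\colon[\omega]^2\to 2$ defined by $c(\{n,m\})=0$ iff $u_n=u_m$, obtaining a homogeneous $X\in p$: homogeneity in colour $0$ witnesses that $\langle u_n\rangle$ is $p$-constant, while homogeneity in colour $1$ witnesses that $\langle u_n\rangle$ is $p$-injective. In the latter case, the $p$-generating sequence $\langle v_n\rangle$ of $u$ is also $p$-injective (as already noted in the text just before the statement, since otherwise it would be $p$-constant and its $p$-limit would violate the strict rank inequality built into Definition~\ref{def:gen-seq}), and both $\langle u_n\rangle$ and $\langle v_n\rangle$ have $p$-limit $u$. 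Theorem~\ref{thm:uniquenessofsequence} therefore yields $\{n<\omega : u_n=v_n\}\in p$, proving~(1).

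For~(2), I split into two cases according to~(1). If $\langle u_n\rangle$ is $p$-constant, pick $v$ and $X\in p$ with $u_n=v$ for all $n\in X$; then $u=\ulim{p}{n}u_n=v$, so $\rho_p(u_n)=\rho_p(v)=\rho_p(u)$ for every $n\in X$, and this immediately gives $\rho_p(u)\geq\sup\{\rho_p(u_n):n\in X\}$ (in fact with equality), while the second clause of~(2) is vacuous. Otherwise $\langle u_n\rangle$ is $p$-equal to the $p$-generating sequence $\langle v_n\rangle$ of $u$; let $X=\{n<\omega : u_n=v_n\}\in p$. For each $n\in X$ we have $\rho_p(u_n)=\rho_p(v_n)<\rho_p(u)$ by the very definition of a $p$-generating sequence, so $\sup\{\rho_p(u_n)+1:n\in X\}\leq\rho_p(u)$. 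For the matching upper bound, set $\beta=\sup\{\rho_p(u_n)+1:n\in X\}$ (note $\beta\geq 1$ since $u$ is nonprincipal, hence $X\neq\varnothing$) and modify $\langle v_n\rangle$ outside $X$ by replacing its entries with any fixed principal ultrafilter, which lies in $\mathscr{G}_1^p\subseteq\mathscr{G}_\beta^p$. The resulting sequence $\langle v_n'\rangle$ is $p$-equal to $\langle v_n\rangle$, so $\ulim{p}{n}v_n'=u$, and satisfies $v_n'\in\mathscr{G}_\beta^p$ for every $n<\omega$. By the definition of $\mathscr{G}_{\beta+1}^p$, this gives $u\in\mathscr{G}_{\beta+1}^p$, whence $\rho_p(u)\leq\beta$, and equality follows.

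The only non-routine step is the clean invocation of Theorem~\ref{thm:uniquenessofsequence} in part~(1): both sequences being compared must be genuinely $p$-injective, and for $\langle v_n\rangle$ this relies on the observation (already recorded in the text just before the statement) that a $p$-generating sequence of a nonprincipal ultrafilter cannot be $p$-constant. Everything else, including the reverse rank inequality in the second case of~(2) via the modification $\langle v_n'\rangle$, is bookkeeping around Definition~\ref{def:gen-seq} and the fact that $u$-limits depend only on values taken on a $u$-large set.
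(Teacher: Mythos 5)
Your proposal is correct and follows essentially the same route as the paper: the selectivity dichotomy ($p$-constant versus $p$-injective, which you justify via the Ramsey colouring the paper leaves implicit), Theorem~\ref{thm:uniquenessofsequence} to identify a $p$-injective sequence with the $p$-generating sequence, and then the rank equality obtained by replacing the generating sequence's terms off $X$ by principal ultrafilters, which is exactly the paper's ``redefining the terms corresponding to $n\notin X$ to have rank zero'' step. Your explicit derivation of the upper bound $\rho_p(u)\leq\beta$ from the definition of $\mathscr G_{\beta+1}^p$ is just a spelled-out version of the paper's appeal to the formula $\rho_p(u)=\sup\{\rho_p(u_n)+1\big|n<\omega\}$ for generating sequences, so no substantive difference.
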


\begin{proof}\hfill
\begin{enumerate}
    \item Since $p$ is selective, the sequence in question is either $p$-constant, or $p$-injective; in the latter case, it must be $p$-equal to any $p$-generating sequence of $u$ by Theorem~\ref{thm:uniquenessofsequence}.
    \item Since $p$ is selective, there is an $X\in p$ such that the values of the $u_n$ are either constant or one-to-one across $n\in X$. In the former case, we must actually have $u=u_n$ for all $n\in X$, and therefore the required inequality follows trivially (in fact, equality holds); in the latter case, by the previous point there is an $X\in p$ such that the sequence of $u_n$, for $n\in X$, is {\it the} $p$-generating sequence for $u$. So, redefining if necessary the terms of the generating sequence corresponding to $n\notin X$ (so that they now have e.g. $p$-rank zero), we obtain the equality claimed in the statement of the theorem.
\end{enumerate}
\end{proof}

We now consider the ultrafilter sum (in the sense of the algebra in the \v{C}ech--Stone compactification) in connection with the elements of $\mathscr G^p$. We begin with a small proposition that, although really easy to prove, is stated explicitly so as to not have to repeat the same computations multiple times.

\begin{proposition}\label{prop:talachasuma}
Let $p,u,v\in\beta\omega$ be ultrafilters, and let $\langle u_n\big|n<\omega\rangle$ be a sequence such that $u=\ulim{p}{n}u_n$. Then,
\begin{equation*}
    u+v=\ulim{p}{n}(u_n+v).
\end{equation*}
\end{proposition}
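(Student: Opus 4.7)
The plan is to give a short computation chaining together two observations already recorded in the section on $u$-limits. First, recall the identity $w+v=\ulim{w}{k}(k+v)$ valid for any ultrafilters $w,v\in\beta\omega$, which was noted in the paragraph following Lemma~\ref{lem:iteratedulims} (where translations $k\longmapsto k+v$ are used to exhibit ordinary sums in $\beta\omega$ as $u$-limits). Second, recall Lemma~\ref{lem:iteratedulims}, which tells us that nested $u$-limits commute with each other in the expected associative fashion.

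With these in hand, the proof I would write is essentially one display: starting from $u=\ulim{p}{n}u_n$, apply the first identity to rewrite $u+v$ as $\ulim{u}{k}(k+v)$, then substitute the expression for $u$ to obtain $\ulim{\left(\ulim{p}{n}u_n\right)}{k}(k+v)$, and finally apply Lemma~\ref{lem:iteratedulims} to pull the outer $p$-limit out, yielding $\ulim{p}{n}\left(\ulim{u_n}{k}(k+v)\right)$. Applying the first identity once more in reverse, now to the ultrafilter $u_n$, the inner expression $\ulim{u_n}{k}(k+v)$ collapses to $u_n+v$, and we arrive at $\ulim{p}{n}(u_n+v)$, as required.

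There is no real obstacle here; the proposition is a bookkeeping consequence of the two previously established facts, and the one subtlety worth double-checking is simply that the identity $w+v=\ulim{w}{k}(k+v)$ is being invoked cleanly in both directions (once for $w=u$ and once for $w=u_n$, uniformly in $n$). Since this identity is just the definition of the sum operation on $\beta\omega$ re-expressed in terms of $u$-limits, no hypothesis on $p$, $u_n$, or $v$ is needed, and the conclusion holds for arbitrary ultrafilters. The resulting proof is a three- or four-line displayed calculation, which is presumably why the authors describe the proposition as ``really easy to prove.''
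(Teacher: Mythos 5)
Your proposal is correct and is essentially identical to the paper's own proof: both express the sum as a limit via $u+v=\ulim{u}{m}(m+v)$, substitute $u=\ulim{p}{n}u_n$, commute the nested limits using Lemma~\ref{lem:iteratedulims}, and collapse the inner limit back to $u_n+v$. (The paper writes $v+m$ where you write $m+v$, but for $m\in\omega$ these coincide, so the difference is purely cosmetic.)
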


\begin{proof}
    \begin{eqnarray*}
    u+v & = & \ulim{u}{m}(v+m)=\ulim{\left(\ulim{p}{n}u_n\right)}{m}(v+m) \\
     & = & \ulim{p}{n}\left(\ulim{u_n}{m}(v+m)\right)=\ulim{p}{n}(u_n+v).
    \end{eqnarray*}
\end{proof}

Of course Proposition~\ref{prop:talachasuma} applies in the particular case where $u\in\mathscr G^p$ and $\langle u_n\big|n<\omega\rangle$ is a $p$-generating sequence for $u$. We will prove the strong result that, if the ultrafilter $p$ we started with is a selective ultrafilter, then the subsemigroup $\mathscr G^p$ is cancellative; along the way we will also establish that every element of $\mathscr G^p$ generates a free subsemigroup.

\begin{lemma}\label{lem:firstinequality}
    Let $p$ be a selective ultrafilter, and let $u,v\in\mathscr G^p$. Then $\rho_p(u+v)\geq\rho_p(v)$.
\end{lemma}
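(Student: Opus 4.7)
The plan is to proceed by transfinite induction on $\rho_p(u)$, leaving $v\in\mathscr G^p$ arbitrary at each stage. The base case is $\rho_p(u)=0$: then $u$ is the principal ultrafilter at some $n_0\in\omega$, and a quick unwrapping of the definition of ultrafilter addition shows $u+v=n_0+v$, which is the Rudin--Keisler image of $v$ under the injective translation $k\mapsto n_0+k$. Thus $u+v\equiv_{\mathrm{RK}}v$, so (as noted after the definition of $\rho_p$) $\rho_p(u+v)=\rho_p(v)$, and the required inequality holds trivially.

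For the inductive step, assume the claim whenever the first summand has $p$-rank strictly below $\rho_p(u)$, and suppose $\rho_p(u)>0$. The statement is vacuous when $v$ is principal, so assume $v$ is nonprincipal; a direct check from the formula defining $u+v$ (using that no singleton belongs to $v$) shows that $u+v$ is then also nonprincipal. Let $\langle u_n\mid n<\omega\rangle$ be the $p$-generating sequence of $u$, so each $\rho_p(u_n)<\rho_p(u)$ and $u=\ulim{p}{n}u_n$. Proposition~\ref{prop:talachasuma} then gives
\begin{equation*}
    u+v=\ulim{p}{n}(u_n+v),
\end{equation*}
and each $u_n+v$ lies in $\mathscr G^p$ because $\mathscr G^p$ is closed under ultrafilter sums by Corollary~\ref{cor:characterizationofg}.

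The inductive hypothesis applied to each $u_n$ (in the role of $u$) gives $\rho_p(u_n+v)\geq\rho_p(v)$ for every $n<\omega$. Now apply Theorem~\ref{thm:ifpselective}(2) to the nonprincipal ultrafilter $u+v\in\mathscr G^p$, written as a $p$-limit of the sequence $\langle u_n+v\mid n<\omega\rangle$: there exists an $X\in p$ such that
\begin{equation*}
    \rho_p(u+v)\;\geq\;\sup\{\rho_p(u_n+v)\mid n\in X\}\;\geq\;\rho_p(v),
\end{equation*}
completing the induction.

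The main obstacle I anticipate is conceptual rather than computational: one must see that Proposition~\ref{prop:talachasuma} together with Theorem~\ref{thm:ifpselective}(2) is exactly the right pairing to propagate a rank lower bound through ultrafilter sums, with selectivity of $p$ entering essentially through the second clause of Theorem~\ref{thm:ifpselective}. The remaining care is only in checking the side conditions---that $u+v\in\mathscr G^p$ and that $u+v$ is nonprincipal---which are routine but indispensable to apply Theorem~\ref{thm:ifpselective} at all.
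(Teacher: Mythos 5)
Your proof is correct and follows essentially the same route as the paper's: induction on $\rho_p(u)$, with the base case via $\eqrk{u+v}{v}$, and the inductive step combining Proposition~\ref{prop:talachasuma} with Theorem~\ref{thm:ifpselective}(2) and the inductive hypothesis applied to the terms $u_n+v$. Your extra verifications (that $u+v$ is nonprincipal and that $u_n+v\in\mathscr G^p$) are side conditions the paper leaves implicit, and they are handled correctly.
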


\begin{proof}
    By induction on $\rho_p(u)$. If $\rho_p(u)=0$ it means $u$ is a principal ultrafilter, so $\eqrk{u+v}{v}$ and in particular $\rho_p(u+v)=\rho_p(v)$. Now if $\rho_p(u)\geq 1$, let $\langle u_n\big|n<\omega\rangle$ be a $p$-generating sequence for $u$. By Proposition~\ref{prop:talachasuma}, we have $u+v=\ulim{p}{n}(u_n+v)$. By Theorem~\ref{thm:ifpselective}, there is an $X\in p$ such that $\rho_p(u+v)\geq\sup\{\rho_p(u_n+v)\big|n\in X\}\geq\rho_p(v)$, where the last inequality is justified by the inductive hypothesis.
\end{proof}

\begin{lemma}\label{lem:cancellative}
    Let $p$ be a selective ultrafilter. Then, the subsemigroup $\mathscr G^p$ of $\beta\omega$ is right cancellative, that is, whenever $u,v,w\in\mathscr G^p$ satisfy $u+w=v+w$, then $u=v$.
\end{lemma}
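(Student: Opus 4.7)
The plan is to proceed by transfinite induction on the pair $(\rho_p(u),\rho_p(v))$, well-ordered lexicographically by $(\max,\min)$, uniformly in $w\in\mathscr{G}^p$.

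First I would dispose of the trivial cases. If $w=k$ is principal then $u+w$ and $v+w$ are Rudin--Keisler images of $u,v$ under the injective map $j\mapsto j+k$, so equality forces $u=v$. If $w$ is nonprincipal but both $u=n$ and $v=m$ are principal (the base case), then $n+w=m+w$ would make $w$ invariant under a nontrivial shift, which is impossible for a nonprincipal ultrafilter (partition $\omega$ modulo $2|n-m|$). From here on I will assume that $w$ is nonprincipal and, without loss of generality, that $\rho_p(u)\leq\rho_p(v)$.

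The main inductive case will have both $u,v$ nonprincipal, with $p$-generating sequences $\langle u_n\rangle$ and $\langle v_n\rangle$. By Proposition~\ref{prop:talachasuma} the two sequences $\langle u_n+w\rangle$ and $\langle v_n+w\rangle$ share the common nonprincipal $p$-limit $u+w=v+w$, so Theorem~\ref{thm:ifpselective}(1) will force each to be either $p$-constant with value $u+w$, or $p$-equal to the (essentially unique) $p$-generating sequence of $u+w$. The key step, and the one I expect to be the main obstacle, is ruling out the $p$-constant alternative: if $u_n+w=u+w$ held for $p$-many $n$, then the pair $(u_n,u)$ would have strictly smaller lex measure than $(u,v)$ (the maximum drops when $\rho_p(u)<\rho_p(v)$, and otherwise the minimum drops, since $\rho_p(u_n)<\rho_p(u)$), so the inductive hypothesis would yield $u_n=u$, contradicting $\rho_p(u_n)<\rho_p(u)$. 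Both sequences will therefore be $p$-equal to the same generating sequence of $u+w$, and hence $p$-equal to each other; pointwise application of the inductive hypothesis to the pair $(u_n,v_n)$ (whose lex measure is strictly smaller) then gives $\{n:u_n=v_n\}\in p$, so taking $p$-limits yields $u=v$.

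The remaining mixed case, $u=n$ principal and $v$ nonprincipal, will proceed analogously but asymmetrically. A short preliminary computation, using continuity and injectivity of the translation map on $\beta\omega$, shows that $\langle n+w_k\rangle$ (where $\langle w_k\rangle$ is the $p$-generating sequence of $w$) is the $p$-generating sequence of $n+w$. Applying Theorem~\ref{thm:ifpselective}(1) to $\langle v_m+w\rangle$ with $p$-limit $n+w$, the non-constant alternative would give $v_m+w=n+w_m$ for $p$-many $m$, which contradicts Lemma~\ref{lem:firstinequality} since $\rho_p(v_m+w)\geq\rho_p(w)>\rho_p(w_m)=\rho_p(n+w_m)$. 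Hence the sequence must be $p$-constant, giving $v_m+w=n+w$ on a $p$-large set; the inductive hypothesis now applies without obstruction (the maximum of the ranks strictly drops from $\rho_p(v)$ to $\rho_p(v_m)$) to yield $v_m=n$ for $p$-many $m$, whence $v=\ulim{p}{m}v_m=n=u$.
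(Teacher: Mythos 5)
Your overall architecture is close to the paper's: push the sum inside the $p$-limit via Proposition~\ref{prop:talachasuma}, use Theorem~\ref{thm:ifpselective} to match $\langle u_n+w\rangle$ and $\langle v_n+w\rangle$ with the (essentially unique) generating sequence of the common sum, and cancel termwise by induction; your base cases are fine, and your mixed case, though argued differently from the paper (the paper simply notes $\rho_p(u+w)=\rho_p(w)$ while $\rho_p(v+w)>\rho_p(w)$ by Lemma~\ref{lem:firstinequality}, so that case is vacuous), also works. However, there is a genuine gap in the main case, precisely at the sentence ``Both sequences will therefore be $p$-equal to the same generating sequence of $u+w$.'' Your measure argument excludes the $p$-constant alternative only for $\langle u_n+w\rangle$, where you cancel $u_n+w=u+w$ against the pair $(u_n,u)$, whose lexicographic measure indeed drops. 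The analogous move for $\langle v_n+w\rangle$ would cancel $v_n+w=v+w$ against the pair $(v_n,v)$, whose measure is $\left(\rho_p(v),\rho_p(v_n)\right)$; under your normalization $\rho_p(u)\leq\rho_p(v)$ this is \emph{not} below $\left(\rho_p(v),\rho_p(u)\right)$ whenever $\rho_p(u)<\rho_p(v)$ and $\rho_p(v_n)\geq\rho_p(u)$, so the inductive hypothesis is unavailable there and the constant alternative on the $v$-side is left open by what you wrote.

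The gap is repairable within your scheme: if $\langle v_n+w\rangle$ is $p$-constant, its constant value is its $p$-limit, which is $v+w=u+w$, so $v_n+w=u+w$ on a set in $p$; cancel against the pair $(v_n,u)$, whose measure does drop (if $\rho_p(u)<\rho_p(v)$ then $\max\{\rho_p(v_n),\rho_p(u)\}<\rho_p(v)$; if $\rho_p(u)=\rho_p(v)$ the maximum ties and the minimum drops), obtaining $v_n=u$ on a set in $p$ and hence $v=\ulim{p}{n}v_n=u$ outright. Alternatively, do what the paper does, which is why it can induct on the single ordinal $\max\{\rho_p(u),\rho_p(v)\}$ with no lexicographic refinement: rule out $p$-constancy by showing each of $\langle u_n+w\rangle$, $\langle v_n+w\rangle$ is $p$-injective, applying the inductive hypothesis only to pairs $(u_n,u_m)$, resp.\ $(v_n,v_m)$, of terms of the generating sequences, all of rank strictly below the current maximum; a $p$-injective sequence cannot be $p$-constant, and Theorem~\ref{thm:ifpselective} then identifies it with the generating sequence of the sum.
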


\begin{proof}
    The proof is done by induction on $\alpha=\max\{\rho_p(u),\rho_p(v)\}$. If $\alpha=0$ then both $u,v$ are principal ultrafilters, say centred at $n_0$ and $m_0$, respectively. Now suppose that $n_0+w=m_0+w$, let $N>\max\{n_0,m_0\}$ and let $i<N$ be such that $i+N\omega\in w$. Then $(n_0+i)+N\omega\in n_0+w$ and $(m_0+i)+N\omega\in m_0+w$, so these two sets must intersect and therefore $n_0+i\equiv m_0+i\mod N$, implying that $n_0\equiv m_0\mod N$; the choice of $N$ entails that we must have $u=n_0=m_0=v$.
    
    Now if $\alpha\geq 1$, the proof breaks into two cases, depending on whether both of $u,v$, or just one of them, are nonprincipal.
    
    \begin{description}
    \item[Case 1] Suppose that only one of $u,v$, without loss of generality $u$, is principal, centred at $n_0$. We must then have $v$ nonprincipal, so we may pick $\langle v_n\big|n<\omega\rangle$ a $p$-generating sequence for $v$. Then $u+w=n_0+w$, so $\eqrk{u+w}{w}$ and therefore $\rho_p(u+w)=\rho_p(w)$ by the second point of Lemma~\ref{lem:galphaproperties}. On the other hand, note that the sequence $\langle v_n+w\big|n<\omega\rangle$ must, by induction hypothesis, be $p$-injective and therefore we may use Proposition~\ref{prop:talachasuma} together with Theorem~\ref{thm:ifpselective} to conclude that this sequence is $p$-equal to a $p$-generating sequence for $v+w$. So, if $\langle w_n\big|n<\omega\rangle$ is a $p$-generating sequence for $v+w$, then we may assume that there exists an $X\in p$ such that $\rho_p(w_n)=0$ for $n\notin X$, and $w_n=v_n+w$ for $n\in X$; Lemma~\ref{lem:firstinequality} then tells us that $\rho_p(w_n)\geq\rho_p(w)$ and therefore $\rho_p(v+w)=\sup\{\rho_p(w_n)+1\big|n<\omega\}>\rho_p(w)=\rho_p(u+w)$. This makes it impossible to have $u+w=v+w$, and the proof of the case is done.
    \item[Case 2] Suppose now that $u,v$ are both nonprincipal ultrafilters, and pick generating sequences $\langle u_n\big|n<\omega\rangle$, $\langle v_n\big|n<\omega\rangle$ for each of them. Then, the induction hypothesis implies that $\langle u_n+w\big|n<\omega\rangle$ is a $p$-injective sequence, so that Proposition~\ref{prop:talachasuma} and Theorem~\ref{thm:ifpselective} allow us to conclude that it is a $p$-generating sequence for $u+w$. With the same argument one concludes that $\langle v_n+w\big|n<\omega\rangle$ is a $p$-generating sequence for $v+w$. Hence the assumption that $u+w=v+w$ implies that there is an $X\in p$ such that $u_n+w=v_n+w$ for all $n\in X$, so that once again by induction hypothesis we may conclude that $u_n=v_n$ for all $n\in X$. Hence the $p$-generating sequence for $u$ is $p$-equal to the $p$-generating sequence for $v$, which readily implies that $u=v$.
    \end{description}
\end{proof}

A part of the proof of Lemma~\ref{lem:cancellative} includes a reasoning that we will often use, so that we better state it once and for all to save space in the future.

\begin{corollary}\label{cor:generatingsum}
    Let $p$ be a selective ultrafilter, and let $u,v\in\mathscr G^p$. If $u$ is nonprincipal, with $p$-generating sequence $\langle u_n\big|n<\omega\rangle$, then $\langle u_n+v\big|n<\omega\rangle$ is $p$-equal to a $p$-generating sequence for $u+v$. In particular, there exists an $X\in p$ such that $\rho_p(u+v)=\sup\{\rho_p(u_n+v)+1\big|n\in X\}$.
\end{corollary}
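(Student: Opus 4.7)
The plan is to invoke the tools just developed, with the key insight being that Lemma~\ref{lem:cancellative} lets us rule out the trivial case in Theorem~\ref{thm:ifpselective}. First, I would apply Proposition~\ref{prop:talachasuma} to the $p$-generating sequence $\langle u_n\big|n<\omega\rangle$ of $u$ to obtain, for free, the identity
\begin{equation*}
u+v=\ulim{p}{n}(u_n+v).
\end{equation*}
This displays $\langle u_n+v\big|n<\omega\rangle$ as a sequence whose $p$-limit is $u+v$; by Theorem~\ref{thm:ifpselective}(1), this sequence must be either $p$-constant or $p$-equal to the $p$-generating sequence for $u+v$. So the only thing left to check is that the $p$-constant alternative cannot occur.

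The main (and essentially only) obstacle is ruling out $p$-constancy of $\langle u_n+v\big|n<\omega\rangle$. Here I would argue by contradiction: if this sequence were $p$-constant with common value $w$, there would exist $X\in p$ with $u_n+v=w$ for all $n\in X$. Since $u$ is nonprincipal and $p$ is selective, the remark preceding Theorem~\ref{thm:ifpselective} guarantees that the $p$-generating sequence $\langle u_n\big|n<\omega\rangle$ is $p$-injective, and by shrinking $X$ if necessary I may assume the values $u_n$ for $n\in X$ are pairwise distinct. But then for any two $n,m\in X$ the equation $u_n+v=u_m+v$ combined with the right cancellativity of $\mathscr G^p$ established in Lemma~\ref{lem:cancellative} would force $u_n=u_m$, contradicting pairwise distinctness.

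Having excluded the $p$-constant case, Theorem~\ref{thm:ifpselective}(1) delivers the first conclusion: $\langle u_n+v\big|n<\omega\rangle$ is $p$-equal to the $p$-generating sequence of $u+v$. The second conclusion, that $\rho_p(u+v)=\sup\{\rho_p(u_n+v)+1\big|n\in X\}$ for some $X\in p$, is then precisely the content of Theorem~\ref{thm:ifpselective}(2) applied to the sequence $\langle u_n+v\big|n<\omega\rangle$, which is not $p$-constant. No computations beyond quoting these results are required, since this corollary is explicitly advertised as extracting the reusable reasoning from Case 2 of Lemma~\ref{lem:cancellative}.
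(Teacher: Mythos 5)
Your proposal is correct and follows essentially the same route as the paper: both proofs combine Proposition~\ref{prop:talachasuma} with the right cancellativity of Lemma~\ref{lem:cancellative} (the paper states this as the sequence $\langle u_n+v\big|n<\omega\rangle$ being $p$-injective, which for a selective $p$ is the same as your exclusion of the $p$-constant case) and then conclude via Theorem~\ref{thm:ifpselective}.
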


\begin{proof}
     Lemma~\ref{lem:cancellative} implies that the sequence $\langle u_n+v\big|n<\omega\rangle$ is $p$-injective, and moreover its $p$-limit is precisely $u+v$ by Proposition~\ref{prop:talachasuma}. The conclusion follows directly from Theorem~\ref{thm:ifpselective}.
\end{proof}

The following lemma will be instrumental for the main result of this subsection.

\begin{lemma}\label{lem:rankofsums}
Let $p\in\beta\omega$ be a selective ultrafilter. If $u,v\in\mathscr G^p$, then $\rho_p(u+v)\geq\max\{\rho_p(u),\rho_p(v)\}$. Moreover the inequality is strict whenever $0<\rho_p(u)\leq\rho_p(v)$.
\end{lemma}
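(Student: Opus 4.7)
The plan is to dispatch both halves by a short induction on $\rho_p(u)$, leveraging Corollary~\ref{cor:generatingsum} to express the rank of a sum in terms of the ranks of the summands coming from a $p$-generating sequence, and then quoting Lemma~\ref{lem:firstinequality} to control the base case and the strict half.

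For the non-strict inequality, note that Lemma~\ref{lem:firstinequality} already gives $\rho_p(u+v)\geq\rho_p(v)$, so it suffices to show $\rho_p(u+v)\geq\rho_p(u)$ by induction on $\rho_p(u)$. If $\rho_p(u)=0$ then $u$ is principal and $\eqrk{u+v}{v}$, making the bound trivial (actually $\rho_p(u+v)=\rho_p(v)\geq 0=\rho_p(u)$). For $\rho_p(u)\geq 1$, pick a $p$-generating sequence $\langle u_n\big|n<\omega\rangle$ for $u$, so in particular $\rho_p(u)=\sup\{\rho_p(u_n)+1\big|n<\omega\}$. By Corollary~\ref{cor:generatingsum}, there is an $X\in p$ with $\rho_p(u+v)=\sup\{\rho_p(u_n+v)+1\big|n\in X\}$. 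Applying the induction hypothesis to each $u_n$ (whose $p$-rank is strictly less than $\rho_p(u)$) yields $\rho_p(u_n+v)\geq\rho_p(u_n)$ for every $n$, hence
\begin{equation*}
\rho_p(u+v)\geq\sup\{\rho_p(u_n)+1\big|n\in X\}=\rho_p(u),
\end{equation*}
where the last equality uses that the $u_n$ for $n\notin X$ can be taken of rank $0$ without changing $u$ (by Theorem~\ref{thm:ifpselective}), so the supremum over $X$ agrees with the supremum over all of $\omega$.

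For the strict inequality, assume $0<\rho_p(u)\leq\rho_p(v)$ and let $\langle u_n\big|n<\omega\rangle$ be a $p$-generating sequence for $u$ (which exists because $u$ is nonprincipal). By Corollary~\ref{cor:generatingsum} there is an $X\in p$ with $\rho_p(u+v)=\sup\{\rho_p(u_n+v)+1\big|n\in X\}$, and by Lemma~\ref{lem:firstinequality} each term satisfies $\rho_p(u_n+v)\geq\rho_p(v)$. Therefore $\rho_p(u+v)\geq\rho_p(v)+1>\rho_p(v)=\max\{\rho_p(u),\rho_p(v)\}$, which is the desired strict inequality.

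I do not anticipate a genuine obstacle: Corollary~\ref{cor:generatingsum} was designed precisely so that the $p$-rank of a sum is strictly greater than the $p$-rank of each ``slice'' $u_n+v$, and Lemma~\ref{lem:firstinequality} handles the other summand. The only minor subtlety is ensuring that passing from the supremum over $X\in p$ to the supremum over $\omega$ is legitimate when recovering $\rho_p(u)$, but this is already licensed by Theorem~\ref{thm:ifpselective}, which lets us modify the generating sequence outside $X$ without changing the $p$-limit.
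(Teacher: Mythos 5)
Your proof is correct, and for the first half it coincides with the paper's: reduce to $\rho_p(u+v)\geq\rho_p(u)$ via Lemma~\ref{lem:firstinequality}, induct on $\rho_p(u)$, and use Corollary~\ref{cor:generatingsum} to write $\rho_p(u+v)=\sup\{\rho_p(u_n+v)+1\mid n\in X\}$; your remark that the supremum over $X$ still recovers $\rho_p(u)$ (by replacing the terms off $X$ by principal ultrafilters) is exactly the point the paper uses implicitly, and your justification of it is sound. Where you genuinely diverge is the strict half: the paper proves both inequalities simultaneously by induction and splits into the cases $\rho_p(u)=1$ (where each $u_n$ is principal, so $\eqrk{u_n+v}{v}$ and the supremum is exactly $\rho_p(v)+1$) and $\rho_p(u)>1$ (where the induction hypothesis is invoked for some $u_{n_0}$ of positive rank), whereas you bypass the induction entirely by applying Lemma~\ref{lem:firstinequality} termwise to get $\rho_p(u_n+v)\geq\rho_p(v)$ for every $n\in X$, so that the nonempty supremum is at least $\rho_p(v)+1>\rho_p(v)=\max\{\rho_p(u),\rho_p(v)\}$. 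This is a legitimate simplification --- it uses only results proved before this lemma, so there is no circularity --- and it buys a shorter argument; the paper's case analysis buys slightly more refined information (e.g.\ that the supremum equals $\rho_p(v)+1$ exactly when $\rho_p(u)=1$), which is in the spirit of the question posed at the end of that subsection about computing $\rho_p(u+v)$ from $\rho_p(u)$ and $\rho_p(v)$.
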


\begin{proof}
    By Lemma~\ref{lem:firstinequality}, it suffices to prove that $\rho_p(u+v)\geq\rho_p(u)$ and that, if $0<\rho_p(u)\leq\rho_p(v)$, then $\rho_p(u+v)>\rho_p(v)$. We prove these two statements simultaneously by induction on $\rho_p(u)$, with the base case $\rho(u)=0$ being obvious. In the case $\rho(u)>0$, we let $\langle u_n\big|n<\omega\rangle$ be a $p$-generating sequence for $u$, and use Corollary~\ref{cor:generatingsum} to find an $X\in p$ such that $\rho_p(u+v)=\sup\{\rho_p(u_n+v)+1\big|n\in X\}$. Now, $\rho_p(u_n+v)\geq\rho_p(u_n)$ for all $n\in X$ by induction hypothesis, so the conclusion is that $\rho_p(u+v)\geq\sup\{\rho_p(u_n)+1\big|n\in X\}=\rho_p(u)$.
    
    Now, if we further assume that $0<\rho_p(u)\leq\rho_p(v)$, the proof breaks in two cases, the first being if $\rho_p(u)=1$. In this case, each $u_n$ is principal, and hence $\eqrk{u_n+v}{v}$, so that $\rho_p(u_n+v)=\rho_p(v)$. Therefore $\rho_p(u+v)=\sup\{\rho_p(u_n+v)+1\big|n\in X\}=\rho_p(v)+1>\rho_p(v)$. Now, in the other case, where $\rho_p(u)>1$, there is at least one $n_0\in X$ such that $\rho_p(u_{n_0})>0$ and so by induction hypothesis $\rho_p(u_{n_0}+v)>\rho_p(v)$; therefore, we have $\rho_p(u+v)=\sup\{\rho_p(u_n+v)+1\big|n\in X\}>\rho_p(u_{n_0}+v)>\rho_p(v)$.
\end{proof}

The following easy consequence of Lemma~\ref{lem:rankofsums} is the main result of this subsection.

\begin{theorem}\label{thm:main-selective}
    Let $p\in\beta\omega$ be a selective ultrafilter. Then, every element of $\mathscr G^p$ generates a free subsemigroup. In particular, no element of $\mathscr G^p$ is an idempotent ultrafilter.
\end{theorem}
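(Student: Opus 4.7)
The plan is to apply Lemma~\ref{lem:rankofsums} to show that, for every nonprincipal $u\in\mathscr{G}^p$, the sequence of ranks $(\rho_p(u^n))_{n\geq 1}$ is strictly increasing. Once this is established, both claims follow at once: strict monotonicity of the ranks forces $u^n\neq u^m$ whenever $n\neq m$ (so $u$ generates a free subsemigroup), and in particular it rules out $u+u=u$, which by the convention adopted in the paper is the only instance of idempotence under consideration.

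Concretely, I would first note that Lemma~\ref{lem:firstinequality} (or equivalently the non-strict part of Lemma~\ref{lem:rankofsums}) applied iteratively to $u^n=u^{n-1}+u$ already yields $\rho_p(u^n)\geq\rho_p(u)>0$ for every $n\geq 1$. For the induction step establishing $\rho_p(u^{n+1})>\rho_p(u^n)$, I would invoke associativity to parenthesize $u^{n+1}=u+u^n$, so that in the invocation of Lemma~\ref{lem:rankofsums} the \emph{first} summand is $u$, which satisfies $0<\rho_p(u)\leq\rho_p(u^n)$. The strict case of that lemma then gives $\rho_p(u^{n+1})>\rho_p(u^n)$, closing the induction.

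The only delicate point — and it is more a conceptual observation than a genuine obstacle — is that, because $\beta\omega$ is non-commutative, the parenthesization matters: the strict case of Lemma~\ref{lem:rankofsums} requires the \emph{first} summand to have the smaller (and strictly positive) rank, so one must write $u^{n+1}=u+u^n$ rather than the more tempting $u^n+u$, for which the relevant rank inequality would run the wrong way once $\rho_p(u^n)>\rho_p(u)$. The principal case needs no separate treatment: no principal ultrafilter is idempotent by convention, and for $u$ principal at some $k>0$ each $u^n$ is the principal ultrafilter at $nk$, so distinct powers give distinct ultrafilters.
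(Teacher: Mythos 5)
Your proof is correct and takes essentially the same route as the paper, whose entire argument is the one-line observation that for $n<m$ Lemma~\ref{lem:rankofsums} yields $\rho_p(u^m)=\rho_p(u^n+u^{m-n})>\rho_p(u^n)$, hence $u^n\neq u^m$. Your inductive reformulation via $u^{n+1}=u+u^n$, which puts the small-rank summand first so that the strict case of Lemma~\ref{lem:rankofsums} applies verbatim (using $0<\rho_p(u)\leq\rho_p(u^n)$ from the non-strict part), is in fact a slightly more careful invocation of that lemma than the paper's own choice of decomposition, and your parenthesization remark is well taken.
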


\begin{proof}
    If $u\in\mathscr G$ is nonprincipal and $n,m\in\mathbb N$ with $n<m$, then by Lemma~\ref{lem:rankofsums} we have $\rho(u^m)=\rho(u^n+u^{m-n})>\rho(u^n)$. In particular $u^m\neq u^n$.
\end{proof}

To finish the subsection, we state the following result that we believe is of independent interest.

\begin{theorem}\label{thm:cancellative}
    Let $p$ be a selective ultrafilter. Then, $\mathscr G^p$ is a cancellative subsemigroup. In other words, for any $u,v,w\in\mathscr G^p$ we have $u+w=v+w$ implies $u=v$, and $u+v=u+w$ implies $v=w$.
\end{theorem}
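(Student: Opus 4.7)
The theorem has two parts: the right cancellation statement $u+w=v+w\Rightarrow u=v$ is exactly the content of Lemma~\ref{lem:cancellative}, so the only thing left to prove is the left cancellation statement $u+v=u+w\Rightarrow v=w$. My plan is to establish this by transfinite induction on $\rho_p(u)$, using the machinery of $p$-generating sequences together with the already-proved right cancellation as an essential input.

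For the base case $\rho_p(u)=0$, the ultrafilter $u$ is principal, say equal to $n_0$. Then $u+v$ is the Rudin--Keisler image of $v$ under the injective translation map $k\mapsto n_0+k$; since an injective function induces an injection on ultrafilters (one can directly recover $v$ from $n_0+v$ via $v=\{A\subseteq\omega : n_0+A\in n_0+v\}$), the equality $n_0+v=n_0+w$ immediately forces $v=w$. For the inductive step with $\rho_p(u)\geq 1$, I would pick the $p$-generating sequence $\langle u_n : n<\omega\rangle$ of $u$ and invoke Corollary~\ref{cor:generatingsum} to see that both $\langle u_n+v : n<\omega\rangle$ and $\langle u_n+w : n<\omega\rangle$ are (up to $p$-equality) $p$-generating sequences, for $u+v$ and $u+w$ respectively. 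Since $u+v=u+w$, and both sequences are $p$-injective and have this common nonprincipal ultrafilter as $p$-limit, Theorem~\ref{thm:ifpselective}(1) forces them to be $p$-equal to one another. Thus there exists $X\in p$ on which $u_n+v=u_n+w$; picking any $n\in X$ and noting that $\rho_p(u_n)<\rho_p(u)$, the induction hypothesis yields $v=w$.

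The subtle point, and the one I expect to be the main obstacle, is justifying that $\langle u_n+v\rangle$ and $\langle u_n+w\rangle$ are $p$-injective, so that Theorem~\ref{thm:ifpselective}(1) identifies them with the essentially unique $p$-generating sequence of $u+v=u+w$ rather than allowing them to collapse to $p$-constant sequences (in which case the argument would break down). This is precisely what right cancellation buys us: starting from the $p$-injective generating sequence $\langle u_n\rangle$, Lemma~\ref{lem:cancellative} applied on the right by $v$ (respectively $w$) guarantees that distinct values of $u_n$ produce distinct values of $u_n+v$ (respectively $u_n+w$), preserving $p$-injectivity. Once this observation is in place, the uniqueness of $p$-generating sequences supplied by Theorem~\ref{thm:uniquenessofsequence} closes the induction cleanly.
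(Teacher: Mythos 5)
Your proposal is correct and follows essentially the same route as the paper's proof: right cancellation is taken from Lemma~\ref{lem:cancellative}, and left cancellation is proved by induction on $\rho_p(u)$ via Corollary~\ref{cor:generatingsum} and the uniqueness of $p$-generating sequences, with the $p$-injectivity of $\langle u_n+v\rangle$ secured exactly as you say by right cancellation. The only (harmless) difference is the base case, where the paper cites a lemma of Hindman--Strauss for $n_0+v=n_0+w\Rightarrow v=w$ while you verify it directly from injectivity of the translation map.
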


\begin{proof}
    Lemma~\ref{lem:cancellative} has already been proven, so it suffices to prove the second statement of the theorem; we will do so by induction on $\rho_p(u)$. The base case, $\rho(u)=0$ means $u$ is a principal ultrafilter, say centred at $n_0$. Hence we need to establish that, for $v,w\in\mathscr G^p$, $n_0+v=n_0+w$ implies $v=w$; this follows from~\cite[Lemma 6.28]{hindman-strauss}.
    
    To continue with the induction, we now assume that $\rho_p(u)\geq 1$ and pick a $p$-generating sequence $\langle u_n\big|n<\omega\rangle$ for $u$. Then, by Corollary~\ref{cor:generatingsum}, the sequence $\langle u_n+v\big|n<\omega\rangle$ is $p$-equal to the $p$-generating sequence for $u+v$ and the sequence $\langle u_n+w\big|n<\omega\rangle$ is $p$-equal to the $p$-generating sequence for $u+w$. Hence the assumption $u+v=u+w$ implies the existence of an $X\in p$ such that, for $n\in X$, $u_n+v=u_n+w$, in particular $X\neq\varnothing$ and the choice of any $n\in X$ together with the induction hypothesis yields $v=w$, and the proof is complete.
\end{proof}

Several of the arguments used in the proofs of Lemma~\ref{lem:cancellative} through Theorem~\ref{thm:cancellative} require an analysis of equations in $\beta\omega$ involving certain combinations of principal and nonprincipal ultrafilters as parameters. The interested reader may find a plethora of results in a similar vein in~\cite{maleki}. We finish this section by stating a question that might be of interest, in view of the proof of Lemma~\ref{lem:rankofsums}.

\begin{question}
Let $p$ be a selective ultrafilter, and let $u,v\in\mathscr G^p$. Is there a formula for $\rho_p(u+v)$ in terms of $\rho_p(u)$ and $\rho_p(v)$?
\end{question}

\subsection{A conjecture of Blass and an application to choiceless set theory}

Recall that a {\bf Solovay model} is one of the form $\lr$ as computed within the generic extension that results from L\'evy-collapsing some inaccessible cardinal to $\omega_1$. Mathias~\cite{mathias-happy} proved that, in such a model, a selective ultrafilter (outside of the model) is generic for $[\omega]^{\omega}/\mathrm{Fin}$, and so one might think of a model of the form $\lr[p]$, where $p$ is selective, as the result of having forced over $\lr$ (note that this does not add any new reals since the relevant forcing is $\sigma$-closed). Since one can perform, in $\zf$, the operations of taking Rudin--Keisler images---furthermore, in $\lr$ one can take {\it all} Rudin--Keisler images since functions $f:\omega\longrightarrow\omega$ are essentially reals--- and ultrafilter limits (equivalently, Blass--Frol\'{\i}k sums), it is clear that, if $p$ is a fixed selective ultrafilter, and $\mathscr G^p$ is as defined in the previous subsection, then one must have\footnote{Also note that, inductively, it is easy to show that $|\mathscr G_\alpha^p|=\mathfrak c$ for $\alpha>1$. Hence, in a model containing all reals it is also possible to consider all sequences of elements of $\mathscr G_\alpha^p$ in order to take their $p$-limits and obtain $\mathscr G_{\alpha+1}^p$.} $\mathscr G^p\subseteq\lr[p]$. Andreas Blass asked~\cite[Question, p. 251]{blass-selective} whether the ultrafilters belonging to $\lr[p]$ are precisely the elements of $\mathscr G^p$. Blass himself has conjectured\footnote{As told to the first author via personal communication.} for a long time that the answer to this question is affirmative. This would have various implications, among others, that in $\lr[p]$ the Rudin--Keisler ordering is linear, something that is impossible in models of $\zfc$ (see~\cite[Question, p. 251]{blass-selective}).

\begin{conjecture}[Blass]
If $\lr$ is a Solovay model, and $p$ is a selective ultrafilter on $\omega$, then the ultrafilters that belong to $\lr[p]$ are precisely the elements of $\mathscr G^p$. In other words, $\beta\omega^{\lr[p]}=\mathscr G^p$.
\end{conjecture}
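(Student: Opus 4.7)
The non-trivial direction is $\beta\omega^{\lr[p]}\subseteq\mathscr G^p$; the reverse inclusion is immediate, since the operations defining $\mathscr G^p$ (Rudin--Keisler images and Blass--Frol\'{\i}k sums) are absolute to any inner model containing $p$ together with the relevant countable sequences of ultrafilters. Fix a nonprincipal $u\in\beta\omega^{\lr[p]}$ (the principal case is trivial, since $\mathscr G_1^p$ already contains every principal ultrafilter). The plan is to unpack an $\lr$-name for $u$ under Mathias's forcing $\mathbb P=[\omega]^\omega/\mathrm{Fin}$ and reassemble it as a transfinite $\mathscr G^p$-construction.

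First I would set up the name. Since $\mathbb P$ is $\sigma$-closed it adds no reals over $\lr$, so $\wp(\omega)^{\lr[p]}=\wp(\omega)^{\lr}$ and $u$ is a set of ground-model subsets of $\omega$. By the homogeneity of Solovay's model, $u$ is $OD(p,r)$ for some $r\in\mathbb R^{\lr}$, so there is an $\lr$-definable $\mathbb P$-name $\dot u$ (with the real parameter absorbed into the defining formula) satisfying $\dot u[p]=u$. By Mathias's genericity theorem, $p$ is $\mathbb P$-generic over $\lr$, and for each $B\in\wp(\omega)^{\lr}$ the statement ``$\check B\in\dot u$'' is decided along a dense set of conditions. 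Exploiting the Mathias pure-decision property, I would then extract a single $P\in p$ that canonically decides, uniformly in $B$, all such statements along a well-structured tree of sub-conditions.

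The heart of the argument is to translate this decision tree into a $\mathscr G^p$-expression. I would proceed by transfinite recursion on an ordinal rank of $\dot u$ read off from the tree, arguing that each branching either corresponds to a Rudin--Keisler image (when the relevant condition contracts along a definable subsequence of the generic) or to an ultrafilter limit indexed by $p$ (when the decision depends on which ``column'' of a definable countable partition the generic meets). The rank function $\rho_p$ developed in Section 3 would naturally track this induction; closure of $\mathscr G^p$ under Rudin--Keisler images (Lemma~\ref{lem:galphaproperties}) and under ultrafilter limits (Theorem~\ref{thm:closedunderlimits}) would then guarantee that the resulting ultrafilter lies in $\mathscr G^p$.

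The principal obstacle---and almost certainly the reason the conjecture has remained open---is the translation step. An arbitrary $\lr$-definable $\mathbb P$-name for an ultrafilter can, a priori, aggregate information from $p$ in ways that do not visibly decompose into iterated Rudin--Keisler images and $p$-limits; what is needed is a canonical normal form theorem asserting that every such name is forced, on a dense set of conditions, to define an ultrafilter given by an explicit $\mathscr G^p$-expression. No such normal form is available in the literature beyond very restricted complexity classes of names, and establishing one appears to require a Ramsey-theoretic or descriptive-set-theoretic refinement of Mathias's analysis that goes substantially beyond the combinatorics developed so far.
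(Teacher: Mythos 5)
This statement is not proved in the paper at all: it is stated as an open conjecture of Blass, and the paper only uses it as a hypothesis (deriving, e.g., that $\lr[p]$ has no idempotent ultrafilters \emph{if} the conjecture holds). So there is no proof of the paper's to compare yours against, and your proposal does not close the gap either --- as you yourself concede in your final paragraph, the ``translation step'' turning an arbitrary $\lr$-definable Mathias name for an ultrafilter into an explicit $\mathscr G^p$-expression is exactly the missing content of the conjecture. The setup you describe (Mathias genericity of $p$ over $\lr$, $\sigma$-closure so no new reals, definability of $u$ from $p$ and a real parameter, pure decision) is the standard and correct frame, but the asserted dichotomy that each ``branching'' of the decision tree corresponds either to a Rudin--Keisler image or to a $p$-indexed ultrafilter limit is precisely what has never been established; nothing in Mathias's analysis, nor in the machinery of Section~3 of the paper (Lemma~\ref{lem:galphaproperties}, Theorem~\ref{thm:closedunderlimits}, the rank $\rho_p$), yields such a normal form for names, and without it the transfinite recursion has no inductive step. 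What you have written is a research plan for the conjecture, not a proof of it.

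Two smaller points. First, even the direction you call immediate, $\mathscr G^p\subseteq\beta\omega^{\lr[p]}$, is not quite a one-line absoluteness remark: to form $\ulim{p}{n}u_n$ inside $\lr[p]$ you need the sequence $\langle u_n\mid n<\omega\rangle$ (a sequence of sets of reals, not of reals) to be an element of $\lr[p]$, and the paper has to argue for this via the cardinality estimate $|\mathscr G_\alpha^p|=\mathfrak c$ and the presence of all reals in the model; your phrase ``containing the relevant countable sequences of ultrafilters'' quietly assumes what needs checking. Second, the homogeneity/definability step should be phrased with care: what one uses is that every set in $\lr[p]$ is definable from $p$, a real, and ordinals, which follows from the corresponding property of $\lr$ together with the $\sigma$-closure and weak homogeneity of $[\omega]^\omega/\mathrm{Fin}$; this is fine, but it is an ingredient of the setup, not a step toward the decomposition that the conjecture demands.
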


So if this conjecture was true, Theorem~\ref{thm:main-selective} would provide some information about the existence of idempotent ultrafilters in models of the form $\lr[p]$.

\begin{theorem}
    If Blass's conjecture holds, then any model of the form $\lr[p]$, where $\lr$ is a Solovay model and $p$ is a selective ultrafilter on $\omega$, is a model of $\zf$ where there exist nonprincipal ultrafilters on $\omega$ but there are no idempotent ultrafilters.
\end{theorem}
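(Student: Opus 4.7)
The plan is to string together three facts that are already at hand: the basic properties of the model $\lr[p]$, Blass's conjecture (which we are assuming), and Theorem~\ref{thm:main-selective}.

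First I would set up the model. By Mathias's theorem cited just before the conjecture, a selective ultrafilter $p$ in $V$ is generic over the Solovay model $\lr$ for the $\sigma$-closed forcing $[\omega]^\omega/\mathrm{Fin}$. Hence $\lr[p]$ is a model of $\zf$ that contains no new reals relative to $\lr$, and $p$ itself witnesses the existence of a nonprincipal ultrafilter on $\omega$ inside $\lr[p]$.

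For the nonexistence of idempotents I would argue by contradiction: suppose some $u\in\lr[p]$ is a nonprincipal idempotent ultrafilter on $\omega$ as computed in $\lr[p]$. By Blass's conjecture, $\beta\omega^{\lr[p]}=\mathscr G^p$, so $u\in\mathscr G^p$. On the other hand, Theorem~\ref{thm:main-selective} asserts that in $V$ no element of $\mathscr G^p$ is idempotent, so $u+u\neq u$ as computed in $V$.

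The only remaining point is absoluteness of the equation $u+u=u$ between $V$ and $\lr[p]$. This is immediate: $u$ is literally the same subset of $\wp(\omega)$ in both models, and the defining formula
\begin{equation*}
u+u=\{A\subseteq\omega\big|\{x<\omega\big|\{y<\omega\big|x+y\in A\}\in u\}\in u\}
\end{equation*}
only quantifies over subsets of $\omega$, which form the same set in both models (since the forcing adds no new reals). Hence $u+u$ computed in $V$ coincides with $u+u$ computed in $\lr[p]$, producing the desired contradiction. I do not foresee a substantive obstacle: the argument is essentially a two-step chain, and the only bookkeeping is to check that each step is $\zf$-safe and that idempotence is absolute between the two models.
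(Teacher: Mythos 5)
Your proof is correct and follows exactly the route the paper intends: combine Blass's conjecture (so every ultrafilter of $\lr[p]$ lies in $\mathscr G^p$) with Theorem~\ref{thm:main-selective}, plus the routine observations that $p$ itself is a nonprincipal ultrafilter in $\lr[p]$ and that idempotence is absolute because $\wp(\omega)$ is the same in $\lr[p]$ and $\mathbf V$. Your explicit absoluteness check is a nice touch of bookkeeping that the paper leaves implicit, but the argument is the same.
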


The interest of this is that it answers a question~\cite[(5), p. 410]{dinasso-tachtsis} of Di Nasso and Tachtsis. We state the result below.

\begin{corollary}
    If Blass's conjecture holds, then the existence of nonprincipal ultrafilters does not imply the existence of idempotent ultrafilters.
\end{corollary}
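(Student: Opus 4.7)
The corollary is a direct consequence of the theorem immediately preceding it; the content of the argument amounts to observing that the counterexample model displayed in that theorem actually exists. I would start from a ground model of $\zfc$ containing an inaccessible cardinal and L\'evy-collapse it to $\omega_1$, working inside the resulting generic extension. In that extension, $\lr$ is a Solovay model, and under $\zfc$ in the full collapse extension selective ultrafilters on $\omega$ are plentiful (they exist under $\ch$, which can be arranged in the ground model, and more generally any Ramsey ultrafilter will do). Picking any such $p$, one forms the intermediate model $\lr[p]$, which satisfies $\zf$ since $\lr$ does and a single set has been adjoined.

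Inside $\lr[p]$ the ultrafilter $p$ itself witnesses the existence of a nonprincipal ultrafilter on $\omega$. On the other hand, the previous theorem asserts that, under Blass's conjecture, no element of $\beta\omega^{\lr[p]}$ is an idempotent. Putting these two facts together, $\lr[p]$ is a model of $\zf$ in which ``there exists a nonprincipal ultrafilter on $\omega$'' holds while ``there exists an idempotent ultrafilter on $\omega$'' fails, so the former statement cannot prove the latter over $\zf$.

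The only nontrivial ingredient is the previous theorem itself, which in turn is routine once Blass's conjecture is assumed: the conjecture identifies $\beta\omega^{\lr[p]}$ with $\mathscr G^p$, and Theorem~\ref{thm:main-selective} rules out idempotents in $\mathscr G^p$. Thus no genuinely new obstacle arises at the corollary level---the main step is pointing at the model provided by the theorem and reading off its properties---and the only standing hypothesis is the large-cardinal assumption needed to set up the Solovay model, which is already built into the statement of the previous theorem.
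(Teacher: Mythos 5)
Your proposal is correct and follows essentially the same route as the paper, which treats the corollary as an immediate consequence of the preceding theorem: the model $\lr[p]$ (available whenever an inaccessible exists, with $p$ selective in the collapse extension where $\ch$ holds) satisfies $\zf$, contains the nonprincipal ultrafilter $p$, and under Blass's conjecture has no idempotents by Theorem~\ref{thm:main-selective}. The only wording worth tightening is ``a single set has been adjoined'': the reason $\lr[p]\vDash\zf$ is Mathias's theorem that $p$ is generic over $\lr$ for $[\omega]^\omega/\mathrm{Fin}$, so $\lr[p]$ is a genuine generic extension of an inner model of $\zf$, not merely $\lr$ with an extra set thrown in.
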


As a final observation, we note that, if Blass's conjecture holds, then in models of the form $\lr[p]$ we have that $\beta\omega$ is a cancellative semigroup by Theorem~\ref{thm:cancellative}.

\section{Idempotent Ultrafilters on $\omega$ without ultrafilter lemma for $\mathbb{R}$}

Towards the end of the previous section, we pointed out that the results obtained have implications regarding the existence of idempotent ultrafilters without the Axiom of Choice. In this section we further pursue this vein of research, providing a proof of the consistency of $\zf$ together with $\neg\utr$ and the fact that every additive filter can be extended to an idempotent ultrafilter, thus answering~\cite[Question (2), p. 410]{dinasso-tachtsis} in the negative.

Recall~\cite[Def. 2.1]{dinasso-tachtsis} that a filter $\mathscr F\subseteq\wp(\omega)$ is said to be {\bf additive} if, for every ultrafilter $u\supseteq\mathscr F$, we have $\mathscr F\subseteq\mathscr F+u$, where the ``pseudo-sum''
\begin{equation*}
    \mathscr F+u=\left\{A\subseteq\omega\big| \{n<\omega\big|\{m<\omega\big|n+m\in A\}\in u\}\in\mathscr F\right\}
\end{equation*}
is defined in the exact same way as the usual sum, except applied to arbitrary filters rather than only to ultrafilters. It is an easy consequence of the Axiom of Choice (in fact, as proved in~\cite[Thm. 3.6]{dinasso-tachtsis}, this already follows from $\utr$) that every additive filter can be extended to an idempotent ultrafilter; in this section we will consider a symmetric model where every additive filter can be extended to an idempotent ultrafilter, but at the same time $\utr$ does not hold.

Therefore, we assume the reader has an appropriate background knowledge on symmetric submodels of generic extensions, as explained, e.g., in~\cite[Ch. 17]{halbeisen}. Any folklore lemmas, as well as any unexplained notations, will follow the aforementioned reference. 

The model we will use is a modification of the model without nonprincipal ultrafilters on $\omega$ from~\cite[pp. 391--392]{halbeisen}, having $\omega_1$ play the role of $\omega$. A general version of this model (using an arbitrary uncountable cardinal $\kappa$ in lieu of $\omega_1$) has been used elsewhere, e.g., in~ \cite[Thm. 3.1]{modelspectraofuniformity} and~\cite[pp. 12ss.]{usuba2024notelosstheorem}. We proceed to explain the model and, at the same time, fix notations that will remain unchanged throughout the rest of the section. Begin by assuming that $\mathbf{V}\vDash\ch$, and consider the forcing notion
\begin{equation*}
    \mathbb{P}=\left\{p;\omega_1 \times \omega_1\longrightarrow 2\big||\dom(p)|\leq \omega\right\}
\end{equation*}
partially ordered by reverse inclusion, i.e. $\leq$ simply means $\supseteq$. Note that this forcing notion is $\sigma$-closed and therefore does not add subsets of $\omega$. For every $X \subset \omega_1 \times \omega_1$, we define the automorphism $\sigma_X:\mathbb{P}\longrightarrow\mathbb P$ as follows:
\begin{equation*}
    \sigma_X(p)(x, y) =
                \begin{cases}
                    p(x, y) & \text{if } (x, y) \notin X, \\
                    1 - p(x, y) & \text{if } (x, y) \in X.
                \end{cases}
\end{equation*}
Note that, whenever $X,Y\subseteq\omega_1$, we have $\sigma_X \circ \sigma_Y = \sigma_{X \bigtriangleup Y}$. In fact, the mapping $X\longmapsto\sigma_X$ is a group isomorphism between $(\wp(\omega_1\times\omega_1),\bigtriangleup)$ and a subgroup of $\aut(\mathbb P)$, which we will denote by $\mathcal{G}=\left\{\sigma_X\big|X\subseteq \omega_1\times \omega_1\right\}$. Furthermore, for every $E\subseteq \omega_1$ we define $\fix(E)=\left\{\sigma_X\big|X\cap (E\times \omega_1)=\varnothing\right\}$. Note that the family $\left\{\fix(E)\big|E\subseteq \omega_1 \text{ is countable}\right\}$ generates a normal filter of subgroups of $\mathcal G$; we will denote this filter by $\mathscr F$; from here on out we will simply say that a $\mathbb P$-name is hereditarily symmetric without specifying that this is all with respect to $\mathcal G$ and $\mathscr F$. Our symmetric model $\mathbb M$ is obtained by letting $G$ be a $(\mathbf{V},\mathbb P)$-generic filter and setting
\begin{equation*}
    \mathbb M=\left\{\mathring{x}[G]\big|\mathring{x}\text{ is a hereditarily symmetric }\mathbb P\text{-name}\right\}.
\end{equation*}

\begin{theorem}[\cite{modelspectraofuniformity}, Theorem 3.1 case $\kappa=\omega_1$] \label{theorem_1}

$\mathbb{M}$ satisfies the $\ch$ plus there are no uniform ultrafilters on $\omega_1$.
\end{theorem}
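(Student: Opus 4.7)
The plan is to handle the two assertions separately, both being much easier than the main results of Sections~2 and~3. For $\ch$ in $\mathbb{M}$, one uses that $\mathbb{P}$ is $\sigma$-closed, so no new countable sequences from $\mathbf{V}$ are adjoined in $\mathbf{V}[G]$; hence $\wp(\omega)^{\mathbf{V}[G]}=\wp(\omega)^{\mathbf{V}}$, and since check-names are trivially hereditarily symmetric this means $\wp(\omega)^{\mathbb{M}}=\wp(\omega)^{\mathbf{V}}$. As $\omega_1$ is also preserved, any ground-model bijection $\wp(\omega)^{\mathbf{V}}\to\omega_1^{\mathbf{V}}$ witnesses $\ch$ inside $\mathbb{M}$.

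For the nonexistence of uniform ultrafilters on $\omega_1$, the strategy will be a symmetry argument. Suppose toward contradiction that $U\in\mathbb{M}$ is a uniform ultrafilter on $\omega_1$, and let $\mathring{U}$ be a hereditarily symmetric name for $U$, supported by a countable set $E\subseteq\omega_1$. For each $\alpha<\omega_1$ I would consider the canonical name $\mathring{A}_\alpha=\{(\check{\beta},p)\big|p\in\mathbb{P},\,p(\alpha,\beta)=1\}$ for the ``row'' $A_\alpha=\{\beta<\omega_1\big|G(\alpha,\beta)=1\}$ of $G$; a direct check shows $\fix(\{\alpha\})\subseteq\stab(\mathring{A}_\alpha)$, so $A_\alpha\in\mathbb{M}$. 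The single calculation that needs care is the transformation rule $\sigma_X(\mathring{A}_\alpha)[G]=A_\alpha\bigtriangleup X_\alpha$, where $X_\alpha=\{\beta\big|(\alpha,\beta)\in X\}$; this follows from recursively unpacking the definition of $\sigma_X$ on names.

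Now pick any $\alpha\in\omega_1\setminus E$. Either $A_\alpha\in U$ or $\omega_1\setminus A_\alpha\in U$, and the two cases are entirely symmetric, so assume the former and choose $p\in G$ with $p\Vdash\mathring{A}_\alpha\in\mathring{U}$. Let $Y_\alpha=\{\beta\big|(\alpha,\beta)\in\dom(p)\}$, which is countable, and set $X=\{\alpha\}\times(\omega_1\setminus Y_\alpha)$. Because $\alpha\notin E$, we have $X\cap(E\times\omega_1)=\varnothing$, so $\sigma_X\in\fix(E)$ and consequently $\sigma_X(\mathring{U})=\mathring{U}$; and by construction $X\cap\dom(p)=\varnothing$, so $\sigma_X(p)=p$. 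Applying $\sigma_X$ to the forcing relation yields $p\Vdash\sigma_X(\mathring{A}_\alpha)\in\mathring{U}$, which by the transformation rule says precisely that $A_\alpha\bigtriangleup(\omega_1\setminus Y_\alpha)\in U$.

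The contradiction is then immediate: $A_\alpha$ and $A_\alpha\bigtriangleup(\omega_1\setminus Y_\alpha)$ both lie in the filter $U$, so their intersection $A_\alpha\cap Y_\alpha$ does too; but this is a subset of the countable $Y_\alpha$, contradicting the uniformity of $U$. The main conceptual step is the choice of $X$, which must flip a cocountable slice of row $\alpha$ (to move $A_\alpha$ to a set whose intersection with $A_\alpha$ is countable) while simultaneously avoiding both $E\times\omega_1$ (to fix $\mathring{U}$) and $\dom(p)$ (to fix $p$); the concrete choice $X=\{\alpha\}\times(\omega_1\setminus Y_\alpha)$ meets all three requirements in one stroke, and after that the forcing-theoretic bookkeeping is routine.
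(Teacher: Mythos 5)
Your proof is correct and follows essentially the route the paper intends: the paper does not prove Theorem~\ref{theorem_1} itself but cites Hayut--Karagila (Theorem 3.1, case $\kappa=\omega_1$) and remarks that the ultrafilter part is Halbeisen's argument ``stepped up'' from $\omega$ to $\omega_1$, which is exactly your symmetry argument---flip the co-countable part of a generic row $A_\alpha$ with $\alpha$ outside the countable support $E$ of $\mathring U$ and off $\dom(p)$, forcing a countable set into $U$. Your $\ch$ argument (via $\sigma$-closedness, $\wp(\omega)^{\mathbb M}=\wp(\omega)^{\mathbf V}$, preservation of $\omega_1$, and $\mathbf V\vDash\ch$) is likewise the standard one behind the cited result.
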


It is worth noting that, even if~\cite[Thm. 3.1]{modelspectraofuniformity} includes $\gch$ as a hypothesis, its proof only uses $\gch$ up to the cardinal $\kappa$, which for us is $\omega_1$. Hence it suffices to assume $\mathbf{V}\vDash\ch$ to conclude that $\mathbb M\vDash\ch$ as well (although there is also no harm in outright assuming that $\mathbf{V}$ satisfies full $\gch$ throughout our proof). As for the fact that there are no uniform ultrafilters on $\omega_1$, the proof is essentially the same as in~\cite[pp. 391--392]{halbeisen} except it is ``stepped up'' from $\omega$ to $\omega_1$. The following corollary follows immediately.

\begin{corollary}
$\mathbb{M}$ satisfies that no ultrafilter on $\omega_1$ extends the filter of co-countable sets.
\end{corollary}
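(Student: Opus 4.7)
The plan is to reduce this corollary to Theorem~\ref{theorem_1} by making explicit the standard equivalence between ultrafilters on $\omega_1$ extending the filter of co-countable sets and uniform ultrafilters on $\omega_1$, where an ultrafilter $u$ on $\omega_1$ is called uniform when every $A\in u$ satisfies $|A|=\omega_1$. This is what the proof really consists of, since Theorem~\ref{theorem_1} already tells us that in $\mathbb M$ there are no uniform ultrafilters on $\omega_1$.

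I would verify both directions of the equivalence. First, suppose $u$ is an ultrafilter on $\omega_1$ that extends the filter of co-countable sets. If there were some countable $A\in u$, then $\omega_1\setminus A$ would be co-countable and hence in $u$, but $A\cap(\omega_1\setminus A)=\varnothing$, contradicting the filter property. Therefore every member of $u$ has cardinality $\omega_1$, and $u$ is uniform. Conversely, if $u$ is uniform and $A\subseteq\omega_1$ is co-countable, then $\omega_1\setminus A$ is countable and therefore cannot belong to $u$ by uniformity; since $u$ is an ultrafilter, $A\in u$. Thus $u$ extends the co-countable filter.

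With this equivalence established, the corollary follows immediately: Theorem~\ref{theorem_1} asserts that $\mathbb M$ contains no uniform ultrafilter on $\omega_1$, hence no ultrafilter on $\omega_1$ extending the filter of co-countable sets. There is no real obstacle in this argument; all the substantive work is contained in Theorem~\ref{theorem_1}, and the present corollary merely rephrases it in a form that will be convenient for the subsequent construction in the section (where the co-countable filter plays the role of a natural object that can be extended via further forcing over $\mathbb M$).
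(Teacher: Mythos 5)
Your proof is correct and matches the paper's intent: the paper states this corollary as an immediate consequence of Theorem~\ref{theorem_1}, and the equivalence you spell out (an ultrafilter on $\omega_1$ extends the co-countable filter if and only if it is uniform, since on $\omega_1$ a set is uncountable exactly when it has cardinality $\omega_1$) is precisely the routine observation being invoked.
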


Finally, since $\mathbb R$ is equipotent to $\omega_1$ in $\mathbb M$, the following is also immediate.

\begin{corollary}\label{cor:noultrafiltertheorem}
$\mathbb{M} \models \neg\utr$.
\end{corollary}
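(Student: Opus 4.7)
The plan is to leverage the preceding corollary (no ultrafilter on $\omega_1$ extends the cocountable filter) together with the fact that $\mathbb M \vDash \mathsf{CH}$ to obtain a filter on $\mathbb R$ with no ultrafilter extension. The essential point is that $\utr$ is a statement about filters on a set of cardinality continuum, and under $\mathsf{CH}$ this set is in bijection with $\omega_1$, so any failure of the ultrafilter principle for $\omega_1$-filters transports directly to $\mathbb R$.

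In more detail, I would work in $\mathbb M$ and proceed as follows. First, by Theorem~\ref{theorem_1} we have $\mathbb M \vDash \mathsf{CH}$, so there exists (in $\mathbb M$) a bijection $f:\mathbb R\longrightarrow\omega_1$. Let $\mathscr C$ denote the filter of cocountable subsets of $\omega_1$, and define
\begin{equation*}
    \mathscr F = \left\{B\subseteq\mathbb R \big| (\exists C\in\mathscr C)(f^{-1}[C]\subseteq B)\right\},
\end{equation*}
which is evidently a proper filter on $\mathbb R$ in $\mathbb M$ (being the $f^{-1}$-pullback of $\mathscr C$). Then suppose, toward a contradiction, that $\utr$ holds in $\mathbb M$, so that $\mathscr F$ extends to an ultrafilter $u$ on $\mathbb R$ in $\mathbb M$. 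Setting
\begin{equation*}
    u' = \left\{A\subseteq\omega_1 \big| f^{-1}[A]\in u\right\},
\end{equation*}
a routine verification (using that $f$ is a bijection) shows $u'$ is an ultrafilter on $\omega_1$ in $\mathbb M$. Moreover, for every $C\in\mathscr C$ we have $f^{-1}[C]\in\mathscr F\subseteq u$, whence $C\in u'$; thus $u'$ extends $\mathscr C$, contradicting the previous corollary.

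There is really no significant obstacle here; the entire weight of the argument has already been absorbed into Theorem~\ref{theorem_1} and its corollary, and the present statement is a straightforward transfer by cardinality. The only thing to double-check is that the bijection $f$ actually lives inside $\mathbb M$, but this is guaranteed by $\mathbb M \vDash \mathsf{CH}$, which furnishes such a bijection internally. Hence $\mathbb M \vDash \neg\utr$, as required.
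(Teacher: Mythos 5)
Your proof is correct and follows the same route as the paper, which simply observes that $\mathbb R$ is equipotent to $\omega_1$ in $\mathbb M$ (by $\ch$ from Theorem~\ref{theorem_1}) and transfers the failure of the ultrafilter extension property for the cocountable filter on $\omega_1$ to a filter on $\mathbb R$; your write-up just makes the transfer via the bijection explicit.
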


So the remainder of the section is devoted to proving that, in $\mathbb M$, every additive filter on $\omega$ may be extended to an idempotent ultrafilter on $\omega$. To see this, it will be useful to decompose the forcing notion $\mathbb P$.

\begin{definition}
Let $\alpha<\omega_1$.
\begin{enumerate}
    \item We let $\mathbb P_\alpha$ be the {\it restriction} of the forcing notion $\mathbb P$ to ``the first $\alpha$ columns'', that is, 
    \begin{equation*}
    \mathbb P_\alpha=\left\{p\in \mathbb P\big|\dom(p)\subseteq\alpha\times\omega_1\right\}.
    \end{equation*}
    \item We similarly restrict the $(\mathbf{V},\mathbb P)$-generic filter $G$ to $\alpha$ by letting $G_\alpha=G\cap\mathbb P_\alpha$.
\end{enumerate}
\end{definition}

Note that, since conditions in $\mathbb P$ have countable domains, it follows that in fact $\mathbb P=\bigcup_{\alpha<\omega_1}\mathbb P_\alpha$. Standard arguments show that each $\mathbb P_\alpha$ is completely embedded in $\mathbb P$; therefore, $G_\alpha$ is a $(\mathbf{V},\mathbb P_\alpha)$-generic filter, and it makes sense to consider the forcing extension $\mathbf{V}[G_\alpha]=\left\{\mathring{x}[G_\alpha]\big|\mathring{x}\text{ is a }\mathbb P_\alpha\text{-name}\right\}$. Of course $\mathbf{V}[G_\alpha]\vDash\zfc$ and $\mathbf{V}[G_\alpha]\subseteq\mathbf{V}[G]$. We can prove even more.

\begin{lemma}\label{lem:intermediate-extensions}
For each $\alpha<\omega_1$, we have $\mathbf{V}[G_\alpha]\subseteq\mathbb M$.
\end{lemma}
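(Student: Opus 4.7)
The plan is to show directly that every element of $\mathbf{V}[G_\alpha]$ has a hereditarily symmetric $\mathbb{P}$-name, so it lives in $\mathbb{M}$. Every $x\in\mathbf{V}[G_\alpha]$ is of the form $\dot{x}[G_\alpha]$ for some $\mathbb{P}_\alpha$-name $\dot{x}\in\mathbf{V}$; moreover, $\mathbb{P}_\alpha\subseteq\mathbb{P}$ as a sub-poset, and since $G_\alpha=G\cap\mathbb{P}_\alpha$, one has $\dot{x}[G_\alpha]=\dot{x}[G]$ whenever $\dot{x}$ is reinterpreted as a $\mathbb{P}$-name. So it suffices to show that every $\mathbb{P}_\alpha$-name, regarded as a $\mathbb{P}$-name, is hereditarily symmetric.

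For this the key observation is that $\fix(\alpha)$ acts trivially on $\mathbb{P}_\alpha$: indeed, if $\sigma_X\in\fix(\alpha)$ then $X\cap(\alpha\times\omega_1)=\varnothing$ by definition, and any $p\in\mathbb{P}_\alpha$ satisfies $\dom(p)\subseteq\alpha\times\omega_1$, so $X\cap\dom(p)=\varnothing$ and hence $\sigma_X(p)=p$. From this I would prove, by $\in$-induction on the $\mathbb{P}_\alpha$-name $\dot{x}$, that $\sigma_X(\dot{x})=\dot{x}$ for every $\sigma_X\in\fix(\alpha)$: writing $\dot{x}=\{(\dot{y}_i,p_i):i\in I\}$ with each $\dot{y}_i$ a $\mathbb{P}_\alpha$-name and each $p_i\in\mathbb{P}_\alpha$, the inductive hypothesis gives $\sigma_X(\dot{y}_i)=\dot{y}_i$, while the previous observation gives $\sigma_X(p_i)=p_i$, so $\sigma_X(\dot{x})=\dot{x}$. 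Since $\alpha<\omega_1$ is countable, $\fix(\alpha)$ belongs to the normal filter $\mathscr{F}$ generating the symmetry structure; this establishes hereditary symmetry of $\dot{x}$.

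Combined with the standard fact that check-names $\check{v}$ for $v\in\mathbf{V}$ are hereditarily symmetric (they are fixed by every automorphism), this shows both $\mathbf{V}\subseteq\mathbb{M}$ and, via the canonical name $\dot{G}_\alpha=\{(\check{p},p):p\in\mathbb{P}_\alpha\}$ which is fixed by $\fix(\alpha)$, that $G_\alpha\in\mathbb{M}$; more importantly the argument above shows $\dot{x}[G_\alpha]\in\mathbb{M}$ for every $\mathbb{P}_\alpha$-name, giving the full inclusion $\mathbf{V}[G_\alpha]\subseteq\mathbb{M}$. There is no substantial obstacle here; the only thing to be careful about is the bookkeeping identification of a $\mathbb{P}_\alpha$-name with a $\mathbb{P}$-name and the verification $\dot{x}[G]=\dot{x}[G_\alpha]$, which is immediate from the fact that all conditions appearing in $\dot{x}$ lie in $\mathbb{P}_\alpha$.
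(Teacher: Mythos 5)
Your argument is correct and is essentially the same as the paper's: both rest on the observation that any $\sigma_X\in\fix(\alpha)$ fixes every condition in $\mathbb{P}_\alpha$, hence fixes every $\mathbb{P}_\alpha$-name (hereditarily), so such names are hereditarily symmetric and their interpretations $\mathring{x}[G_\alpha]=\mathring{x}[G]$ lie in $\mathbb{M}$. You merely spell out the $\in$-induction that the paper leaves as ``it follows easily,'' so there is nothing to change.
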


\begin{proof}
It is immediate that $\mathring{x}[G_\alpha]=\mathring{x}[G]$ whenever $\mathring{x}$ is a $\mathbb P_\alpha$-name, so it suffices to show that every $\mathbb P_\alpha$-name is a hereditarily symmetric name. Note that, if $\sigma_X\in\fix(\alpha)$, then $\sigma_X(p)=p$ whenever $p\in\mathbb P_\alpha$. Therefore, it follows easily that $\fix(\alpha)\subseteq\sym_{\mathcal G}(\mathring{x})$ whenever $\mathring{x}$ is a $\mathbb P_\alpha$-name.
\end{proof}

We are now ready to tackle the main lemma of this section.

\begin{lemma}\label{lemma_j_1}
Let $x\in\mathbb M$ be such that $x\subseteq\mathbf{V}$. Then there exists an $\alpha<\omega_1$ such that $x\in\mathbf{V}[G_\alpha]$.
\end{lemma}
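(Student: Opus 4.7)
The plan is to exploit the hereditary symmetry of a $\mathbb P$-name for $x$ together with the ``bit-flipping'' automorphisms $\sigma_Y$, pushing the name down to something decidable by $G_\alpha$. This is an instance of the standard strategy used to show that, in Cohen-style symmetric models, sets of ground-model elements end up living in intermediate extensions indexed by the support of their names.

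First, I would fix a hereditarily symmetric name $\mathring{x}$ for $x$. Hereditary symmetry guarantees a countable $E\subseteq\omega_1$ with $\fix(E)\subseteq\sym_{\mathcal G}(\mathring{x})$; pick any $\alpha<\omega_1$ such that $E\subseteq\alpha$. The natural candidate for a $\mathbf V[G_\alpha]$-description of $x$ is
\begin{equation*}
 y=\{v\in\mathbf V:(\exists p\in G_\alpha)(p\Vdash_{\mathbb P}\check v\in\mathring x)\},
\end{equation*}
which lies in $\mathbf V[G_\alpha]$ since both $\mathring x$ and the forcing relation are definable in $\mathbf V$. The inclusion $y\subseteq x$ is immediate because $G_\alpha\subseteq G$, so the entire substance of the argument is to prove $x\subseteq y$.

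For the nontrivial direction, take $v\in x$ (so in particular $v\in\mathbf V$) and any $p\in G$ forcing $\check v\in\mathring x$. I would claim that the restriction $p_\alpha:=p\restriction(\alpha\times\omega_1)\in\mathbb P_\alpha$ already forces $\check v\in\mathring x$; since $p_\alpha\in G_\alpha$, this places $v$ into $y$. Suppose, toward a contradiction, that some $q\leq p_\alpha$ in $\mathbb P$ forces $\check v\notin\mathring x$. Define
\begin{equation*}
 Y=\{(\xi,\eta)\in\dom(q)\cap\dom(p):\xi\geq\alpha\text{ and }q(\xi,\eta)\neq p(\xi,\eta)\}.
\end{equation*}
Because $Y\cap(\alpha\times\omega_1)=\varnothing$ and $E\subseteq\alpha$, we have $\sigma_Y\in\fix(E)\subseteq\sym_{\mathcal G}(\mathring x)$, so $\sigma_Y(\mathring x)=\mathring x$ and hence $\sigma_Y(q)\Vdash\check v\notin\mathring x$. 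On $\alpha\times\omega_1$ the automorphism $\sigma_Y$ acts trivially, so $\sigma_Y(q)$ still extends $p_\alpha=p\restriction(\alpha\times\omega_1)$; on $(\omega_1\setminus\alpha)\times\omega_1$ the set $Y$ was built precisely so that $\sigma_Y(q)$ agrees with $p$ wherever both are defined. Therefore $\sigma_Y(q)$ and $p$ are compatible, and their common extension simultaneously forces $\check v\in\mathring x$ and $\check v\notin\mathring x$, the desired contradiction.

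I expect the only real obstacle to be the bookkeeping around the compatibility of $\sigma_Y(q)$ with $p$: one must carefully separate the roles of the two regions, verifying that on $\alpha\times\omega_1$ the condition $q$ already extends $p_\alpha$ (because $q\leq p_\alpha$ and $\sigma_Y$ acts trivially there) while on $(\omega_1\setminus\alpha)\times\omega_1$ the flip $\sigma_Y$ is precisely what reconciles $q$ with $p$. Once that is laid out correctly, the argument closes and Lemma~\ref{lemma_j_1} follows.
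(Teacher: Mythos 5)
Your argument is correct, and its engine is exactly the paper's: the bit-flipping automorphism supported off the first $\alpha$ columns lies in $\fix(E)\subseteq\sym_{\mathcal G}(\mathring x)$, fixes check names, and reconciles conditions with $p$ beyond column $\alpha$. The difference is in the packaging. The paper works entirely with names: it first replaces $\mathring x$ by $\mathring y=\left\{(\check u,p)\big|p\Vdash\check u\in\mathring x\right\}$, restricts the conditions to obtain a genuine $\mathbb P_\alpha$-name $\mathring z$, and verifies $\mathring z[G]=x$ by choosing $q\in G$ with $\dom(q)=\dom(p)$ below $p\upharpoonright(\alpha\times\omega_1)$ and flipping $p$ onto $q$. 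You instead prove a homogeneity statement about the forcing relation --- if $p\Vdash\check v\in\mathring x$ then already $p\upharpoonright(\alpha\times\omega_1)\Vdash\check v\in\mathring x$ --- and then read $x$ off from $G_\alpha$ together with the ground-model forcing relation; this is the classical argument for placing sets of ground-model elements into intermediate extensions, and it isolates a clean reusable lemma. What the paper's route buys is that, by exhibiting an explicit $\mathbb P_\alpha$-name, there is nothing to verify about $y=\left\{v\big|\exists p\in G_\alpha\, p\Vdash\check v\in\mathring x\right\}$ being a \emph{set} of $\mathbf V[G_\alpha]$ rather than merely a definable class; in your write-up you should add the one-line remark that $y$ is contained in a ground-model set (e.g.\ the set of possible values of names in $\dom(\mathring x)$, or anything of rank bounded by that of $\mathring x$), so separation in $\mathbf V[G_\alpha]$ applies --- equivalently, convert your definition into the $\mathbb P_\alpha$-name $\left\{(\check v,p\upharpoonright(\alpha\times\omega_1))\big|p\Vdash\check v\in\mathring x\right\}$, which is precisely the paper's $\mathring z$.
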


\begin{proof}
Let $\mathring{x}$ be a hereditarily symmetric $\mathbb P$-name for $x$. Define the following $\mathbb P$-name:
\begin{equation*}
    \mathring{y}=\left\{\left(\check{u},p\right)\big|p\Vdash``\check{u}\in\mathring{x}"\right\}.
\end{equation*}
It is straightforward to prove that $\mathring{y}[G]\subseteq\mathring{x}[G]=x$; conversely, if $u\in x=\mathring{x}[G]$ it is because $p\Vdash``\check{u}\in\mathring{x}"$ for some $p\in G$, therefore $(\check{u},p)\in\mathring{y}$ and so $u=\check{u}[G]\in\mathring{y}[G]$. So $\mathring{y}$ is a $\mathbb P$-name, such that $\mathring{y}[G]=x$, with the additional property that every element of $\dom(\mathring{y})$ is a check name. Furthermore, $\mathring{y}$ is a hereditarily symmetric name. To see this, suppose $E\subseteq\omega_1$ is a countable set such that $\fix(E)\subseteq\sym_{\mathcal G}(\mathring{x})$, and let $\sigma_X\in\fix(E)$. Then, for all $p\in\mathbb P$ satisfying $p\Vdash``\check{u}\in\mathring{x}"$, since $\sigma_X(\check{u})=u$ and $\sigma_X(\mathring{x})=\mathring{x}$, we conclude $\sigma_X(p)\Vdash``\check{u}\in\mathring{x}"$. Therefore, for each $(\check{u},p)\in\mathring{y}$, we have
\begin{equation*}
\sigma_X(\check{u},p)=\left(\sigma_X(\check{u}),\sigma_X(p)\right)=(\check{u},\sigma_X(p))\in\mathring{y};
\end{equation*}
so that $\sigma_X(\mathring{y})=\mathring{y}$. Thus $\fix(E)\subseteq\sym_{\mathcal G}(\mathring{y})$, as sought.

We now let $\alpha <\omega_1$ be such that $E\subseteq\alpha$. Define
\begin{equation*}
    \mathring{z}= \left\{(\check{u},p\upharpoonright\left(\alpha\times\omega_1)\right)\big|(\check{u},p)\in\mathring{y}\right\}
\end{equation*}
By definition, $\mathring{z}$ is a $\mathbb P_\alpha$-name and therefore $\mathring{z}[G]=\mathring{z}[G_\alpha] \in \mathbf{V}[G_\alpha]$. We claim that $\mathring{z}[G]=x$. It is straightforward to see that $x=\mathring{y}[G]\subseteq \mathring{z}[G]$, so we will focus on proving the reverse inclusion. For this, let $u=\check{u}[G]\in\mathring{z}[G]$, so that there is a $p\in\mathbb P$ with $(\check{u},p)\in\mathring{y}$, and $p\upharpoonright(\alpha\times\omega_1)\in G$. Take a $q\in G$ such that $q\leq p\upharpoonright(\alpha\times\omega_1)$ and $\dom(q)=\dom(p)$, and let $X=\left\{a\in \omega_1 \times \omega_1\big|p(a)\neq q(a)\right\}$. Observe that $X\cap(\alpha\times\omega_1)=\varnothing$ and that $\sigma_X (p)=q$. Therefore, $\sigma_X\in\fix(\alpha)\subseteq\fix(E)\subseteq\sym_{\mathcal G}(\mathring{y})$; from here it follows that  $(\check{u},q)=\sigma_X (\check{u},p)\in\mathring{y}$, and so $u=\check{u}[G]\in\mathring{y}[G]=x$. The proof is complete.
\end{proof}

From this, the following corollary follows relatively easily, and constitutes the result we have been seeking.

\begin{corollary}\label{cor:extendadditive}
$\mathbb{M}$ satisfies that every additive filter on $\omega$ can be extended to an idempotent ultrafilter. 
\end{corollary}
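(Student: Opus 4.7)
The plan is to reduce extending an additive filter in $\mathbb M$ to extending one inside a $\zfc$-satisfying intermediate extension of the form $\mathbf V[G_\alpha]$, and then import the resulting idempotent ultrafilter back into $\mathbb M$ by absoluteness. All the machinery I would need has essentially been proved already: Lemmas~\ref{lem:intermediate-extensions} and~\ref{lemma_j_1}, together with the fact that $\mathbb P$ is $\sigma$-closed.

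First I would take $\mathscr F\in\mathbb M$ an additive filter. Because $\mathbb P$ is $\sigma$-closed, no subset of $\omega$ is added in $\mathbf V[G]$, so $\wp(\omega)^{\mathbb M}=\wp(\omega)^{\mathbf V}$; in particular every element of $\mathscr F$ already lies in $\mathbf V$, i.e.\ $\mathscr F\subseteq\mathbf V$. Lemma~\ref{lemma_j_1} then produces an ordinal $\alpha<\omega_1$ with $\mathscr F\in\mathbf V[G_\alpha]$.

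Next I would verify that $\mathscr F$ continues to be additive as computed inside $\mathbf V[G_\alpha]$. This is where Lemma~\ref{lem:intermediate-extensions} matters: any ultrafilter $v\in\mathbf V[G_\alpha]$ that extends $\mathscr F$ automatically belongs to $\mathbb M$, so the additivity hypothesis in $\mathbb M$ yields $\mathscr F\subseteq\mathscr F+v$; and since $\wp(\omega)$ (hence the pseudo-sum operation) is absolute between the two models, this inclusion is witnessed verbatim in $\mathbf V[G_\alpha]$. The direction of the quantifier in the definition of additivity is the convenient one: $\mathbf V[G_\alpha]$ may have strictly fewer ultrafilters than $\mathbb M$, so being additive in $\mathbb M$ is the stronger condition and descends to $\mathbf V[G_\alpha]$.

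Finally, since $\mathbf V[G_\alpha]\vDash\zfc$ (in particular $\utr$ holds there), \cite[Thm.~3.6]{dinasso-tachtsis} applied inside $\mathbf V[G_\alpha]$ produces an idempotent ultrafilter $u\supseteq\mathscr F$ with $u\in\mathbf V[G_\alpha]$. By Lemma~\ref{lem:intermediate-extensions} we have $u\in\mathbb M$, and since the statements ``$u$ is a nonprincipal ultrafilter on $\omega$'', ``$u+u=u$'', and ``$\mathscr F\subseteq u$'' involve only quantification over $\wp(\omega)$ (which is absolute), they remain true in $\mathbb M$. I do not foresee a real obstacle in this argument: the conceptual work has already been done in setting up $\mathbb M$ so that every set of reals shows up in some $\zfc$-intermediate extension contained in $\mathbb M$, and from there the proof is a clean absoluteness/transfer between $\mathbb M$ and $\mathbf V[G_\alpha]$.
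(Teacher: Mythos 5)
Your argument is correct and follows essentially the same route as the paper's own proof: use Lemma~\ref{lemma_j_1} to locate $\mathscr F$ inside some intermediate $\zfc$ model $\mathbf V[G_\alpha]$, extend it there to an idempotent ultrafilter, and transfer the result back to $\mathbb M$ via Lemma~\ref{lem:intermediate-extensions} together with the fact that $\wp(\omega)$ is the same in all the models involved. Your explicit verification that additivity of $\mathscr F$ descends to $\mathbf V[G_\alpha]$ (because every ultrafilter of $\mathbf V[G_\alpha]$ extending $\mathscr F$ already lies in $\mathbb M$, where additivity is assumed) is left implicit in the paper and is a welcome clarification.
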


\begin{proof}
Let $\mathscr{F}$ be an additive filter on $\omega$, with $\mathscr F\in\mathbb{M}$. Of course we can think of $\mathscr F$ as a set of reals, and we know (since $\mathbb P$ does not add real numbers) $\mathbb M$ has the same reals as $\mathbf{V}$, so that $\mathscr F\subseteq\mathbf{V}$. Hence, by Lemma~\ref{lemma_j_1}, there exists an $\alpha <\omega_1$ such that $\mathscr F\in \mathbf{V}[G_\alpha]$. Since $\mathbf{V}[G_\alpha]$ satisfies the Axiom of Choice, there exists an idempotent ultrafilter $u\in\mathbf{V}[G_\alpha]$ that extends $\mathscr F$. By Lemma~\ref{lem:intermediate-extensions}, we have $u\in\mathbb M$; furthermore, we know that the reals in $\mathbb M$ are the same as the reals in $\mathbf{V}$ and so $u$ is still an ultrafilter in $\mathbb M$ (and of course $u+u$ is the same whether computed in $\mathbb M$ or in $\mathbf{V}$, so it is also an idempotent in $\mathbb M$), and we are done.
\end{proof}

We summarize the results from Corollaries~\ref{cor:noultrafiltertheorem} and~\ref{cor:extendadditive} in the following statement that answers~\cite[Question (2), p. 410]{dinasso-tachtsis}

\begin{corollary}
It is consistent with $\zf$ that every additive filter on $\omega$ can be extended to an idempotent ultrafilter yet $\neg\utr$ holds. In particular, the statement that every additive filter on $\omega$ can be extended to an idempotent ultrafilter does not imply $\utr$ under $\zf$.
\end{corollary}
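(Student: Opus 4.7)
The plan is simply to combine the two corollaries already established for the symmetric model $\mathbb M$ constructed in this section. Both statements (negation of $\utr$ and extendability of every additive filter to an idempotent ultrafilter) refer to the same model $\mathbb M$, so the heavy lifting was already done in building $\mathbb M$ and verifying its properties; no further construction or computation is required.

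Concretely, I would first invoke Corollary~\ref{cor:noultrafiltertheorem} to obtain $\mathbb M\vDash\neg\utr$, and then invoke Corollary~\ref{cor:extendadditive} to obtain that $\mathbb M$ satisfies ``every additive filter on $\omega$ can be extended to an idempotent ultrafilter.'' Since $\mathbb M$ is a symmetric submodel of a generic extension of a model of $\zfc$, we have $\mathbb M\vDash\zf$ by standard facts (see, e.g., \cite[Ch.~17]{halbeisen}). Therefore $\mathbb M$ witnesses the consistency statement.

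For the ``in particular'' clause, I would argue by contraposition: if $\zf$ proved the implication from ``every additive filter can be extended to an idempotent ultrafilter'' to $\utr$, then the same implication would hold in $\mathbb M$, contradicting the simultaneous truth of the hypothesis and the failure of $\utr$ in $\mathbb M$.

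There is essentially no obstacle here, as this is a packaging step that collects the two corollaries into a single headline statement answering Di Nasso and Tachtsis's Question~(2). The only subtlety worth flagging explicitly is that $\mathbb M$ is a bona fide model of $\zf$ (not $\zfa$), so the independence result is genuine in the setting of pure set theory; this is immediate from the construction but is the reason the whole symmetric-submodel apparatus was needed rather than a Fraenkel--Mostowski permutation model.
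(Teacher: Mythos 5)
Your proposal is correct and matches the paper exactly: the statement is just a packaging of Corollaries~\ref{cor:noultrafiltertheorem} and~\ref{cor:extendadditive}, both witnessed by the single symmetric model $\mathbb M$, which satisfies $\zf$ by the standard theory of symmetric submodels. Nothing further is needed.
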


\end{document}